\newtheorem{theorem}{Theorem}[section]
\newtheorem{proposition}[theorem]{Proposition}
\newtheorem{lemma}[theorem]{Lemma}
\newtheorem{corollary}[theorem]{Corollary}
\theoremstyle{definition}
\newtheorem{remark}[theorem]{Remark}
\newtheorem{hypothesis}[theorem]{Hypothesis}
\let\originalleft\left
\let\originalright\right
\renewcommand{\left}{\mathopen{}\mathclose\bgroup\originalleft}
\renewcommand{\right}{\aftergroup\egroup\originalright}
\DeclareMathOperator*{\argmin}{argmin}
\newcommand{\me}{\mathcal{E}}
\newcommand{\mh}{\mathcal{H}}
\newcommand{\ml}{\mathcal{L}}
\newcommand{\mi}{\mathcal{I}}
\newcommand{\mr}{\mathcal{R}}
\newcommand{\ms}{\mathcal{S}}
\newcommand{\eepstau}{\me_{\eps, \tau}}
\newcommand{\eepstaurhat}[1]{\me_{\eps, \tau}(#1|\hat\rho)}
\newcommand{\dst}{\mathrm{d}}
\newcommand{\wass}{{\mathbf W}}
\newcommand{\mpt}{{\mathcal{P}_2}}
\newcommand{\mom}{{\mathfrak m}}
\newcommand{\R}{{\mathbb R}}
\newcommand{\N}{{\mathbb N}}
\newcommand{\Q}{{\mathbb Q}}
\newcommand{\Rp}{{\mathbb R}_{>0}}
\newcommand{\Rnn}{{\mathbb R}_{\ge0}}
\newcommand{\eps}{\varepsilon}
\newcommand{\intd}{\mathrm{d}}
\newcommand{\weakto}{\rightharpoonup}
\title[Global exponential stability in thin-film equation]{Global exponential stability of stationary profiles in a thin film equation with second-order diffusion}
\author{Christian Parsch$^\ast$}
\address{$^\ast$Technical University of Munich, Germany; TUM School of Computation, Information and Technology, Department of Mathematics, Boltzmannstrasse 3, D-80538 Garching}
\date{May 2025}
\begin{document}

\email{christian.parsch@tum.de}

\begin{abstract}
    We study existence and long-time behavior of weak solutions to a thin-film equation with a confinement potential and a second-order degenerate diffusion term. It is known that in absence of second order effects, solutions for general initial data converge at an exponential rate in time to the unique stationary profile. Our main result is that if the strength of the additional forces is sufficiently small, this global exponential equilibration behavior persists, at a slightly smaller rate. Our proof uses the formulation of the equation as a Wasserstein gradient flow, and an auxiliary lower-order Lyapunov functional. 
\end{abstract}

\maketitle

\section{
    Introduction}
We prove existence and global exponential convergence to equilibrium of non-negative unit-mass weak solutions to fourth-order evolution equations in $\R$ of the following form:
\begin{equation} \label{eq:thinfilm}
\partial_{t} \rho = -\left(\rho \,\rho_{xxx}\right)_x + \lambda \left(\rho\, x\right)_{x} + \eps \left(\rho\, h'(\rho )_{x}\right)_{x}.
\end{equation}
Throughout this paper, $\rho$ denotes a (time-dependent) probability density with bounded second moment. The confinement strength $\lambda$ is strictly positive. The non-linear function $h:\Rnn \to \R$ degenerates at $0$ up to second order. Moreover, it fulfills some additional assumptions on its higher derivatives specified in Hypothesis \ref{hyp:hderiv} below, but its precise form is kept quite general. For our exponential equilibration result we require the parameter $\eps > 0$ to be sufficiently small.

Thin-film equations similar to \eqref{eq:thinfilm} are known to appear as approximations to the motion of a spreading droplet of liquid on a solid surface. The fourth order term models the force generated by surface tension, see e.g. \cite{GO2_gf, GO1_approx, O_lub} for details. The lower-order terms describe additional forces acting on the system, such as gravity. Some examples for such models including stabilizing or destabilizing second-order terms include \cite{GPS2, GPS, L}.

Basic existence theory for thin-film type equations was developed in \cite{BF}, and has been extensively studied since then, see e.g. \cite{BDPG, GKO, GGKO, Gn, GP, G, LST}. Existence results for equations including second-order diffusion terms can be found in e.g. \cite{BP2_pormedia, BP3_ex_blowup, BP,  CEFFJ}. In the case of a bounded domain, qualitative results on long-time behavior and steady states have been obtained in e.g. \cite{BGW, BP2_pormedia, LP, LP2}. The main techniques in this case are based on the fact that in a bounded domain, solutions are strictly positive after a finite time, which is not the case in the unbounded domain studied in this paper.

We rigorously prove existence and quantitative equilibration estimates for weak solutions to equation \eqref{eq:thinfilm}, using its gradient-flow formulation, see e.g. \cite{CEFFJ, DM, GO2_gf, MMCS, O_lub}: On the metric space $\mpt(\R)$ of probability measures on $\R$ with finite second moment, with the Wasserstein-2 metric as distance, the underlying energy functional is given by 
\begin{equation}
    \me_\eps(\rho) = \left\{ 
    \begin{array}{ll}
        \int_\R \left[\frac{1}{2} \rho_x^2 + \frac{\lambda}{2}x^2\,\rho + \eps h(\rho)\right]\,\intd x & \text{for } \rho \in H^1(\R)\\
        +\infty & \text{else.}
    \end{array}
    \right..
\end{equation}
We first prove existence of a global-in-time weak solution to \eqref{eq:thinfilm}, given any initial datum $\rho_0$ with finite energy. Our main result, specified in Theorem \ref{thm:main_cvgce}, is to show that if the parameter $\eps > 0$ is small enough, this weak solution converges exponentially fast to a unique steady state. The rate of convergence only depends on $h, \lambda$ and $\eps$, but not on $\rho_0$.

For second-order evolution equations, such as the porous-medium equation \cite{JKO, O_pormed}, exponential convergence to equilibrium in often obtained by abstract gradient-flow theory \cite{AGS}, and relies on the geodesic convexity of the underlying energy functional. For thin-film type equations like \eqref{eq:thinfilm}, this machinery is not directly applicable, due to the lack of geodesic (semi-)convexity of the functional $\rho \mapsto \int \rho_x^2\,\intd x$, as observed in \cite{CS}. However, in the unperturbed case $\eps = 0$, exponential equilibration results have still been established previously \cite{CU, CT, MMCS}. The main idea these works are based on is a hidden connection between \eqref{eq:thinfilm} for $\eps = 0$ and a specific second-order porous medium equation, which is the Wasserstein gradient flow of the entropy functional
\begin{align*}
    \ml(\rho) := \int_\R \left[ \frac{2}{3} \rho^{3/2} + \frac{\tilde \lambda}{2}x^2 \,\rho \right]\,\intd x
\end{align*}
with the constant $\tilde \lambda = \sqrt{\lambda/6}$. While the energy $\me_0$ is non-convex, the functional $\ml$ is geodesically $\tilde \lambda$-convex, and its global minimizer is the Smyth-Hill profile
\begin{align} \label{eq:min_unperturbed}
    \bar \rho(x) = \frac{\lambda}{24} \left(x_*^2 - x^2\right)_+^2
\end{align}
with $x_* > 0$ chosen such that $\bar\rho$ has unit mass. The key observation now is that $\bar\rho$ is also the unique global minimizer of $\me_0$, making $\ml$ a natural functional to use in the asymptotic analysis of \eqref{eq:thinfilm} for $\eps = 0$. In \cite{MMCS}, a deeper connection between $\me_0$ and $\ml$ is used in to obtain equilibration estimates for fourth-order equations by means of abstract gradient flow theory. This method however relies on the precise structure of the evolution equation, which is not available in our setting with a general non-linear function $h$.

\subsection{Splitting the equation} Instead of using abstract theory, in \cite{CT} the connection between \eqref{eq:thinfilm} in the unperturbed case and the corresponding porous medium equation is established by using the following trick: The higher-order and lower-order terms from the equation are split in a particular way such that the decay of the entropy $\ml$ along the solution can be computed more easily. Specifically, \eqref{eq:thinfilm} with $\eps = 0$ is rewritten in the following form:
\begin{equation} \label{eq:tf_carrillo}
    \partial_{t} \rho = -2 \left(\rho^{3/2} \left(\sqrt{\rho} + \frac{\tilde \lambda}{2}x^2\right)_{xx}\right)_{xx} + 6\tilde \lambda\,\left(\rho \left( \sqrt{\rho} + \frac{\tilde \lambda}{2}x^2 \right)_x \right)_x,
\end{equation}
with $\tilde\lambda$ as above. The precise algebraic structure of \eqref{eq:tf_carrillo} now allows, at least formally, the direct derivation of a decay estimate for $\ml$ by integrating by parts:
\begin{align} \label{eq:ml_decay_intro}
    -\frac{\intd}{\intd t} \ml(\rho) = -\int_\R \partial_t \rho\,\left(\sqrt{\rho} + \frac{\tilde\lambda}{2}x^2 \right)\,\intd x \geq 6 \tilde \lambda \int_\R \rho \left( \sqrt{\rho} + \frac{\tilde \lambda}{2}x^2 \right)_x^2\,\intd x.
\end{align}
The main trick is that the term in the entropy production rate \eqref{eq:ml_decay_intro} corresponding to the fourth-order term in \eqref{eq:tf_carrillo} is clearly non-negative.
Using a standard entropy dissipation inequality for the $\tilde\lambda$-convex functional $\ml$, such as \cite[Theorem 2.1]{CMV}, estimate \eqref{eq:ml_decay_intro}
yields exponential decay of the relative entropy $\ml(\rho)- \ml(\bar\rho)$ at rate $12\tilde \lambda^2 = 2\lambda$.

Our approach to derive an exponential convergence rate for \eqref{eq:thinfilm} in the perturbed case $\eps > 0$ is a generalization of the formulation \eqref{eq:tf_carrillo}. The main challenge with this idea lies in the fact that the profile $\bar\rho$ is in general no longer a minimizer of the energy $\me_\eps$ over $\mpt(\R)$, and thus no longer a steady state for equation \eqref{eq:thinfilm}. In particular, $\ml$ can no longer be a Lyapunov functional for the evolution. To solve this problem, we need to perturb the auxiliary functional in a precise way: For sufficiently
small $\eps > 0$, we construct a family of uniformly convex outer potentials $W_\eps:\R \to \R$ with $W_0(x) = \frac{\tilde\lambda}{2}x^2$. Setting
\begin{equation} \label{eq:ml_intro}
    \ml_\eps(\rho) := \int_\R \left[\frac{2}{3}\rho^{3/2} + W_\eps \,\rho\right]\,\intd x,
\end{equation}
we show that for an appropriate choice of potential $W_\eps$, $\ml_\eps$ is a Lyapunov functional for \eqref{eq:thinfilm}. In particular, $\ml_\eps$ has the same unique global minimizer as $\me_\eps$.

The condition that the respective minimizers are identical poses a strong restriction on the choice of $W_\eps$: After having constructed a minimizer $\bar\rho^\eps$ of the energy $\me_\eps$, a variational argument shows that this condition requires us to choose, up to an additive constant, $W_\eps = -\sqrt{\bar\rho^\eps}$ on the set where $\bar\rho^\eps$ is positive. Observe that in the unperturbed case, this condition is satisfied by $W_0(x) = \frac{\tilde\lambda}{2}x^2$, which follows from \eqref{eq:min_unperturbed}.

In addition, we need $W_\eps$ to be uniformly convex on $\R$, in order to make the entropy $\ml_\eps$ uniformly geodesically convex. By the considerations above, we thus need to show that the profile $-\sqrt{\bar\rho^\eps}$ on the set $\{\bar\rho^\eps > 0\}$ can be extended to a potential $W_\eps$ that is uniformly convex with some positive modulus $\tilde\lambda_\eps$ on the whole real line, and sufficiently regular on the boundary points $\partial\{\bar\rho^\eps > 0\}$.

The main step of the convergence proof is the generalization of the splitting \eqref{eq:tf_carrillo}. For now, we present the formal ideas, the details and rigorous justification will be done in section \ref{sec:exp_cvgce}. We show that \eqref{eq:thinfilm} can be written in the following form:
\begin{equation} \label{eq:carrillo_general_intro}
     \partial_t\rho = -2\,\left(\rho^{3/2}\left(\sqrt{\rho} + W_\eps\right)_{xx}\right)_{xx} + 6\, \left(\rho\, (W_\eps)_{xx}\left(\sqrt{\rho} + W_\eps\right)_x\right)_x + \eps\,(\rho \,\mr_\eps)_x
\end{equation}
where $\mr_\eps$ is a remainder term that can be estimated well. More precisely, we prove a functional inequality of the form
\begin{align} \label{eq:funcineq_intro}
    \int_\R \rho\,\mr_\eps^2\,\intd x \leq \kappa \int_\R \rho\,\left(\sqrt{\rho} + W_\eps \right)_x^2\,\intd x
\end{align}
with a constant $\kappa$ that is independent of $\rho$ and (small) $\eps$.
Note that since $W_0 = \frac{\tilde\lambda}{2}(\cdot)^2$, the splitting \eqref{eq:carrillo_general_intro} is precisely the same as \eqref{eq:tf_carrillo} in the case $\eps = 0$. Similarly as in \eqref{eq:ml_decay_intro}, this yields an estimate for the decay of $\ml_\eps$, where the fourth-order terms can be omitted due to their non-negativity:
\begin{align*}
    -\frac{\intd}{\intd t} \ml_\eps(\rho)  = -\int_\R \partial_t \rho\,\left(\sqrt{\rho} + W_\eps \right)\,\intd x \geq 6 \tilde\lambda_\eps \int_\R \rho\,\left(\sqrt{\rho} + W_\eps \right)_x^2\,\intd x + \eps \int_\R \rho\,\mr_\eps\,\left(\sqrt{\rho} + W_\eps \right)_x\,\intd x
\end{align*}
The first integral above yields the desired entropy production rate in the same way as in the unperturbed case. Inequality \eqref{eq:funcineq_intro} and Young's inequality show that the second integral can be controlled by means of the first one. This implies a decay estimate for $\ml_\eps$ similar to \eqref{eq:ml_decay_intro},
proving exponential decay of $\ml_\eps(\rho)$ along the solution.

\subsection{Assumptions and main results}
Below, we specify our assumptions on the non-linear function $h: \Rnn \to \R$ in the lower-order term, and summarize our main results.
\begin{hypothesis} \label{hyp:hderiv}
We assume that $h \in C^2(\Rnn) \,\cap\, C^4(\Rp)$ with $h(0) = h'(0) = 0$. Further, we assume that for some constant $A < +\infty$, it holds
\begin{align} \label{eq:hderbound_second}
    |h''(r)| &\leq \frac{A}{2}\,\min\left\{\frac{1}{\sqrt{r}}, \sqrt{r}\right\}\quad \text{for all } r > 0.
\end{align}
Additionally, we assume that for any bound $H < +\infty$, there exists a constant $L_H$ such that
\begin{align} \label{eq:hderbound_fourth}
    |h^{(4)}(r)| &\leq \frac{L_H}{8\, r^{3/2}}  \quad \text{for } 0 < r \leq H.
\end{align}
\end{hypothesis}
\begin{remark}
    Note that we do not pose any assumptions on the sign of $h$, meaning that the lower-order term in \eqref{eq:thinfilm} may have stabilizing or destabilizing effects. The assumption that $h'(0) = 0$ is made to simplify the notation, but removing it does not change any of the results. This can be seen by observing that replacing $h$ by the function $\hat h(r) = h(r) + \alpha r$ with an arbitrary $\alpha \in \R$ only changes the energy functional $\me_\eps$ by a global constant. The assumptions \eqref{eq:hderbound_second} and \eqref{eq:hderbound_fourth} impose bounds on the growth of $h$ for small and large $r$. Specifically, if there exist $p,q \in \R$ such that $|h(r)| \sim r^p$ for $r \to 0$ and $|h(r)| \sim r^q$ for $r \to \infty$, \eqref{eq:hderbound_second} and \eqref{eq:hderbound_fourth} imply $p \geq 5/2$ and $q \leq 3/2$. As results such as \cite{BP, HR} suggest, fourth-order equations like \eqref{eq:thinfilm} may admit finite-time blow-up phenomena if $|h|$ grows too fast for $r \to \infty$, meaning that global existence can not be expected. However, we do not claim that the above conditions are sharp.
\end{remark}
The main results of this paper are summarized the following three theorems. We assume in all of the following that $\lambda > 0$ and a function $h: \Rnn \to \R$ that satisfies Hypothesis \ref{hyp:hderiv} are given.
\begin{theorem}[Existence of solution] \label{thm:main_ex}
    For all $\eps > 0$ and initial data $\rho_0 \in \mpt(\R) \cap H^1(\R)$, there exists a weakly continuous curve $\rho: [0, +\infty[ \,\to \mpt(\R)$ with $\rho(0) = \rho_0$ that satisfies equation \eqref{eq:thinfilm} in the following weak sense: We have $\rho \in L^2_{loc}([0, +\infty[, H^2(\R))$ and for every test function $\zeta \in C^\infty_c(]0, +\infty[\,\times \,\R)$, it holds
        \begin{align} \label{eq:weak_sol_intro}
            \int_0^\infty \int_\R \left[\rho\,\partial_t \zeta + (\rho\,\zeta_x)_x \left(-\rho_{xx} + \frac{\lambda}{2}x^2 + \eps h'(\rho)\right)\right]\,\intd x\,\intd t = 0.
        \end{align}
\end{theorem}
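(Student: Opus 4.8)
The plan is to build the solution by the minimizing-movement (JKO) scheme for the functional $\me_\eps$ on the metric space $(\mpt(\R),\wass_2)$ and then to pass to the limit as the time step tends to zero. \emph{Step 1: the scheme is well defined.} Fix $\tau>0$, set $\rho_\tau^0:=\rho_0$, and for $n\ge 0$ let $\rho_\tau^{n+1}$ be a minimizer of $\rho\mapsto \tfrac{1}{2\tau}\wass_2^2(\rho,\rho_\tau^n)+\me_\eps(\rho)$ over $\mpt(\R)$. To see that minimizers exist one checks that $\me_\eps$ is lower semicontinuous for narrow convergence with narrowly compact sublevels; the only non-standard term is $\eps\int_\R h(\rho)\,\intd x$. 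From $h(0)=h'(0)=0$ and \eqref{eq:hderbound_second} one obtains, integrating twice, $|h(r)|\le C\,r^{3/2}$ for all $r\ge 0$ (near $0$ the bound $|h''(r)|\le\tfrac A2\sqrt r$ gives $|h(r)|\lesssim r^{5/2}$, while $|h''(r)|\le\tfrac{A}{2\sqrt r}$ for large $r$ gives $|h(r)|\lesssim r^{3/2}$); combined with the one-dimensional interpolation $\int_\R\rho^{3/2}\,\intd x\le C\|\rho_x\|_{L^2}^{1/3}$, valid for $\rho\ge 0$ with $\int\rho=1$, this yields $\eps\int_\R|h(\rho)|\,\intd x\le \tfrac14\int_\R\rho_x^2\,\intd x+C_\eps$. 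Hence $\me_\eps$ is bounded below and coercive, its sublevels are bounded in $H^1(\R)$ and in second moment, and the direct method produces $\rho_\tau^{n+1}\in H^1(\R)$.

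\emph{Step 2: a priori estimates.} The scheme yields the energy inequality $\me_\eps(\rho_\tau^{n+1})+\tfrac{1}{2\tau}\wass_2^2(\rho_\tau^{n+1},\rho_\tau^n)\le\me_\eps(\rho_\tau^n)$, whence $\sum_n\wass_2^2(\rho_\tau^{n+1},\rho_\tau^n)\le 2\tau\big(\me_\eps(\rho_0)-\inf\me_\eps\big)$ and, using $\lambda>0$, uniform $L^\infty_t H^1_x$ and second-moment bounds. The decisive extra estimate is a uniform $L^2_{loc}([0,\infty[,H^2(\R))$ bound, which I would obtain by a flow-interchange argument following \cite{MMCS} with the Boltzmann entropy $\mathcal G(\rho):=\int_\R\rho\log\rho\,\intd x$, whose $\wass_2$-gradient flow is the heat semigroup (a $0$-flow): the flow-interchange inequality gives $\me_\eps(\rho_\tau^n)-\me_\eps(\rho_\tau^{n+1})\ge\tau\,\mathrm D_{\mathcal G}\me_\eps(\rho_\tau^{n+1})$, and a direct computation of the dissipation of $\me_\eps$ along the heat flow gives, at a density $\rho$, $\mathrm D_{\mathcal G}\me_\eps(\rho)=\int_\R\rho_{xx}^2\,\intd x-\lambda+\eps\int_\R h''(\rho)\rho_x^2\,\intd x\ge\int_\R\rho_{xx}^2\,\intd x-C$, where one uses $|h''|\le\tfrac A2$ (from \eqref{eq:hderbound_second}, since $\min\{1/\sqrt r,\sqrt r\}\le 1$) together with the $H^1$ bound of Step 1 to absorb the last term. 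Summing over $n$ with $n\tau\le T$ gives $\sum_n\tau\int_\R(\rho_{\tau,xx}^{n+1})^2\,\intd x\le \me_\eps(\rho_0)-\inf\me_\eps+CT$, uniformly in $\tau$; together with the $L^\infty_t H^1_x$ bound this is a uniform $L^2_{loc}([0,\infty[,H^2(\R))$ bound on the piecewise-constant interpolant $\bar\rho_\tau$.

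\emph{Step 3: compactness and passage to the limit.} From $\sum_n\wass_2^2(\rho_\tau^{n+1},\rho_\tau^n)\le C\tau$ one gets a uniform generalized $\tfrac12$-Hölder estimate for $\bar\rho_\tau$ in $\wass_2$, so by a refined Arzel\`a--Ascoli argument \cite{AGS} a subsequence converges, for every $t\ge 0$, to a curve $\rho$ which is $\tfrac12$-Hölder in $\wass_2$ — in particular narrowly, hence weakly, continuous with $\rho(0)=\rho_0$. The $H^2$ bound gives $\bar\rho_\tau\weakto\rho$ in $L^2_{loc}([0,\infty[,H^2(\R))$; combining with a uniform bound on the discrete time increments in a negative-order local norm (from the discrete Euler--Lagrange relation) and an Aubin--Lions--Simon argument upgrades this to strong convergence $\bar\rho_\tau\to\rho$ in $L^2_{loc}([0,\infty[,H^1_{loc}(\R))$, hence a.e. Finally, the standard first-variation computation for the JKO step — perturbing $\rho_\tau^{n+1}$ along the flow of the vector field $x\mapsto\zeta_x(n\tau,x)$ and using two-sided minimality, the optimal map from $\rho_\tau^{n+1}$ to $\rho_\tau^n$ being $\mathrm{id}-\tau\,\partial_x\big(-\rho_{\tau,xx}^{n+1}+\tfrac\lambda2 x^2+\eps h'(\rho_\tau^{n+1})\big)$ up to an error controlled by $\wass_2^2(\rho_\tau^{n+1},\rho_\tau^n)\,\|\zeta\|_{C^2}$ — produces for each $n$ a discrete version of \eqref{eq:weak_sol_intro}. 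Summing in $n$, carrying out a discrete summation by parts on the $\rho\,\partial_t\zeta$ term, multiplying by $\tau$ and letting $\tau\downarrow 0$ yields \eqref{eq:weak_sol_intro}: the error terms sum to $O\big(\sum_n\wass_2^2\big)=O(\tau)\to 0$; the linear terms and the term $(\rho\zeta_x)_x\rho_{xx}$ pass to the limit by the weak $L^2_{loc}(H^2)$ convergence together with the strong $L^2_{loc}(H^1_{loc})$ convergence; and the nonlinear term passes to the limit by a.e.\ convergence, the bound $|h'(r)|\le\tfrac A2 r$, and an equi-integrability (Vitali) argument based on $\bar\rho_\tau\to\rho$ in $L^2_{loc}$. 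This also gives $\rho\in L^2_{loc}([0,\infty[,H^2(\R))$.

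\emph{Main obstacle.} The heart of the argument is the a priori $L^2_{loc}(H^2)$ estimate of Step 2 — without it the term $(\rho\zeta_x)_x\rho_{xx}$ in \eqref{eq:weak_sol_intro} is not even meaningful — and, closely related, securing enough strong compactness to pass to the limit in the nonlinear and degenerate structure of \eqref{eq:thinfilm}: since the mobility $\rho$ vanishes, $\rho\,\rho_{xxx}$ only makes sense after the integration by parts built into \eqref{eq:weak_sol_intro}, and justifying the discrete Euler--Lagrange identity with a remainder that is genuinely $O(\wass_2^2/\tau)$ — rather than presupposing more regularity of the JKO minimizers than the scheme provides — requires the careful, but by now standard, perturbation argument along optimal plans.
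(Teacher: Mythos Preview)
Your proposal follows essentially the same route as the paper: JKO scheme, flow interchange with the Boltzmann entropy for the $L^2_{loc}(H^2)$ bound, discrete Euler--Lagrange identity with an $O(\wass_2^2/\tau)$ remainder, and passage to the limit. One genuine slip: you state the flow-interchange inequality as $\me_\eps(\rho_\tau^n)-\me_\eps(\rho_\tau^{n+1})\ge\tau\,\mathrm D_{\mathcal G}\me_\eps(\rho_\tau^{n+1})$, but the inequality actually bounds the dissipation of $\me_\eps$ along the heat flow by the drop of the \emph{auxiliary} functional $\mathcal G$ along the JKO step, i.e.\ $\mathcal G(\rho_\tau^n)-\mathcal G(\rho_\tau^{n+1})\ge\tau\,\mathrm D_{\mathcal G}\me_\eps(\rho_\tau^{n+1})$ (cf.\ the paper's Proposition~\ref{prop:step_reg}). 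The telescoping sum then needs a lower bound on $\mathcal G(\rho_\tau^N)$, which comes from the uniform second-moment bound; the conclusion is unchanged.

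Two further minor differences worth noting. First, the paper derives the discrete Euler--Lagrange equation by a \emph{vertical} perturbation $\rho^++s\eta$ and an explicit one-dimensional formula for the derivative of $\wass_2^2$ (Lemma~\ref{lem:wass_deriv}), rather than the transport perturbation you describe; both are standard and yield the same identity on $\{\rho^+>0\}$. Second, for compactness the paper avoids Aubin--Lions--Simon: it obtains pointwise narrow convergence at rational times, extends by Wasserstein H\"older continuity, then shows strong $L^2([0,T]\times\R)$ convergence by a direct tightness argument and deduces strong convergence of $\rho^\tau_x$ via the interpolation $\|\rho^\tau_x-\rho_x\|_{L^2}^2\le\|\rho^\tau_{xx}-\rho_{xx}\|_{L^2}\|\rho^\tau-\rho\|_{L^2}$. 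Your Aubin--Lions route is equally valid.
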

\begin{theorem}[Global minimizer] \label{thm:main_stat}
 The energy functional $\me_\eps$ possesses a global minimizer $\bar\rho^\eps$, which is radially decreasing about $0$, has bounded support and lies in $C^1(\R)$.
\end{theorem}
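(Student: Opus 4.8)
The plan is to run the direct method of the calculus of variations on $\mpt(\R) \cap H^1(\R)$, then symmetrize the minimizer by rearrangement, and finally read off bounded support and $C^1$-regularity from the associated Euler--Lagrange equation together with a transversality (contact-angle) condition at the free boundary. First I would establish coercivity. Integrating \eqref{eq:hderbound_second} twice from $h(0) = h'(0) = 0$ gives the pointwise bound $|h(r)| \le C r^{3/2}$ for all $r \ge 0$; combined with the one-dimensional Gagliardo--Nirenberg inequality $\|\rho\|_{L^\infty} \le C \|\rho_x\|_{L^2}^{2/3}\|\rho\|_{L^1}^{1/3}$ and $\int_\R \rho = 1$, this yields
\[
  \eps \Bigl| \int_\R h(\rho)\,\intd x \Bigr| \le \eps\, C \|\rho\|_{L^\infty}^{1/2} \le \eps\, C \|\rho_x\|_{L^2}^{1/3},
\]
so that $\me_\eps(\rho) \ge \tfrac12 \|\rho_x\|_{L^2}^2 + \tfrac\lambda2 \int_\R x^2 \rho \,\intd x - \eps\, C \|\rho_x\|_{L^2}^{1/3}$, which by Young's inequality is bounded below. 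Along a minimizing sequence $(\rho_n)$ this forces uniform bounds on $\|\rho_{n,x}\|_{L^2}$, on the second moments $\int x^2 \rho_n$, and hence (again via Gagliardo--Nirenberg and $\|\rho_n\|_{L^1}=1$) in $H^1(\R)$. Passing to a subsequence, $\rho_n \rightharpoonup \bar\rho$ weakly in $H^1(\R)$, so $\rho_n \to \bar\rho$ locally uniformly and pointwise; the moment bound gives tightness, which upgrades this to convergence in $L^1(\R)$, so $\bar\rho$ is a probability density with finite second moment lying in $H^1(\R)$. The Dirichlet term is weakly lower semicontinuous, the confinement term is lower semicontinuous by Fatou, and for the nonlinear term the bound $|h(\rho_n)| \le C \rho_n^{3/2} \le C M^{1/2} \rho_n$ (with $M = \sup_n\|\rho_n\|_{L^\infty}$) together with $L^1(\R)$- and pointwise convergence lets me pass to the limit by generalized dominated convergence; hence $\me_\eps(\bar\rho) \le \liminf_n \me_\eps(\rho_n)$, i.e. $\bar\rho$ is a global minimizer.

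Next I would replace $\bar\rho$ by its symmetric decreasing rearrangement $\bar\rho^\eps$: the P\'olya--Szeg\H{o} inequality gives $\bar\rho^\eps \in H^1(\R)$ with no larger Dirichlet energy, equimeasurability gives $\int h(\bar\rho^\eps) = \int h(\bar\rho)$ with unchanged mass, and the bathtub principle gives $\int x^2 \bar\rho^\eps \le \int x^2 \bar\rho$ (since $x \mapsto x^2$ is even and increasing in $|x|$); hence $\bar\rho^\eps$ is again a global minimizer, and it is radially decreasing about $0$ by construction, so $\{\bar\rho^\eps > 0\} = (-R,R)$ for some $R \in (0,+\infty]$. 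Testing minimality against mean-zero $\phi \in C^\infty_c((-R,R))$ produces a Lagrange multiplier $\mu \in \R$ with $-\bar\rho^\eps_{xx} + \tfrac\lambda2 x^2 + \eps h'(\bar\rho^\eps) = \mu$ on $(-R,R)$; since $\bar\rho^\eps \in H^1 \subset C^0$ is positive and locally bounded away from $0$ there and $h \in C^4(\Rp)$, a bootstrap gives $\bar\rho^\eps \in C^2((-R,R))$ and, the right-hand side being bounded near $R$, $\bar\rho^\eps_x$ extends continuously up to $R$. If $R = +\infty$, then $\bar\rho^\eps \in C^2(\R)$ and $\bar\rho^\eps \in H^1(\R)$ forces $\bar\rho^\eps(x) \to 0$, whence $h'(\bar\rho^\eps) \to h'(0) = 0$ and $\bar\rho^\eps_{xx}(x) = \tfrac\lambda2 x^2 - \mu + o(1) \to +\infty$; then $\bar\rho^\eps_x$ is eventually strictly increasing and unbounded as $x \to +\infty$, contradicting $\bar\rho^\eps_x \le 0$ there. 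So $R < +\infty$ and $\supp \bar\rho^\eps = [-R,R]$.

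It remains to show $\bar\rho^\eps_x(R^-) = 0$ (and symmetrically at $-R$), which with the vanishing exterior derivative yields $\bar\rho^\eps \in C^1(\R)$. Here I would test minimality against one-sided variations $\phi \in C^\infty_c(\R)$ supported in a small neighbourhood of $R$, with $\phi \ge 0$ on $[R,+\infty)$ and $\int_\R \phi = 0$: since $\bar\rho^\eps$ vanishes at most linearly at $R^-$, one has $\bar\rho^\eps + t\phi \ge 0$ for small $t > 0$. Differentiating the energy at $t = 0^+$, integrating by parts on $(-R,R)$ using the interior equation, and letting $\supp\phi$ shrink to $\{R\}$ leaves, for the limiting parameters $\beta = \phi(R) \ge 0$ and $\gamma = \int_R^{R+\delta}\phi \ge 0$, the inequality $0 \le \bar\rho^\eps_x(R^-)\,\beta + (\tfrac\lambda2 R^2 - \mu)\,\gamma$; choosing $\beta > 0$ and $\gamma \to 0^+$ forces $\bar\rho^\eps_x(R^-) \ge 0$, and with monotonicity $\bar\rho^\eps_x(R^-) \le 0$ this gives $\bar\rho^\eps_x(R^-) = 0$. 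I expect the direct-method and rearrangement parts to be routine and the bounded-support part to be a short ODE argument; the crux is precisely this last step, namely that \emph{minimality}---and not merely the interior Euler--Lagrange equation, which by itself leaves the contact slope undetermined---imposes the contact-angle condition. The delicate points there are constructing admissible one-sided variations that genuinely probe the free boundary while respecting the unit-mass constraint, and controlling the error terms as the variation localizes at $R$; the degeneracy $h'(0) = 0$ is what makes the nonlinear contribution at the boundary negligible.
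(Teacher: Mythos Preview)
Your argument is essentially correct, but it diverges from the paper's route at the key step, namely the $C^1$-regularity (equivalently, the zero contact-angle condition $\bar\rho^\eps_x(\pm R^\mp)=0$).

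The paper obtains $C^1(\R)$ indirectly and very cheaply: it applies the minimizing-movement estimates already proved in Section~\ref{sec:sol_ex} with the degenerate choice $\hat\rho=\rho^+=\bar\rho^\eps$. Then the flow-interchange bound \eqref{eq:step_h2_bound} reads $\int(\bar\rho^\eps_{xx})^2\le \lambda+\eps\,\tfrac{A}{2}\|\bar\rho^\eps_x\|_{L^2}^2$, since $\mh(\hat\rho)-\mh(\rho^+)=0$. This gives $\bar\rho^\eps\in H^2(\R)\subset C^1(\R)$ in one line, and the contact-angle condition comes for free from nonnegativity. You instead probe the free boundary directly with one-sided variations to extract the transversality condition from minimality. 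Your argument is sound (the separation of the negative part of $\phi$ away from $R$, where $\bar\rho^\eps$ is bounded below, makes admissibility unproblematic, and the limit $\gamma\to0^+$ with $\beta$ fixed is achievable by shrinking only the right tail of $\phi$), but it is the more laborious path; the paper's trick of recycling the $H^2$-estimate avoids the free-boundary analysis entirely.

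The remaining pieces---direct method, rearrangement, Euler--Lagrange, and bounded support via the blow-up of $\bar\rho^\eps_{xx}\sim\tfrac{\lambda}{2}x^2$---match the paper's Propositions~\ref{prop:min_ex_tf} and~\ref{prop:min_elg_tf}, with only cosmetic differences (you use $|h(r)|\le Cr^{3/2}$ for coercivity where the paper uses $|h(r)|\le Cr^{5/2}$; both suffice).
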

\begin{theorem}[Exponential convergence to equilibrium] \label{thm:main_cvgce}
    There exists some $\bar\eps > 0$ and a constant $C < +\infty$ such that for every $0 < \eps < \bar\eps$, the following hold:
    \begin{itemize}
        \item The minimizer $\bar\rho^\eps$ of the energy is unique.
        \item For all initial data $\rho_0 \in \mpt(\R) \cap H^1(\R)$, the weak solution $\rho$ to \eqref{eq:thinfilm} with $\rho(0) = \rho_0$ from Theorem \ref{thm:main_ex} converges to $\bar\rho^\eps$ in $L^1(\R)$ exponentially at rate $2\lambda - C\eps$. More precisely, there is a constant $c(\rho_0)$ depending on $\rho_0$ but not on $t$ such that for all $t \geq 0$:
        \begin{align*}
            \|\rho(t) - \bar\rho^\eps\|_{L^1(\R)}^2 \leq c(\rho_0)\, e^{-(2\lambda - C\eps)t}.
        \end{align*}
    \end{itemize}
\end{theorem}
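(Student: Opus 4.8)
The plan is to carry out the strategy sketched above: construct the perturbed confinement potential $W_\eps$, use it to turn $\ml_\eps$ into a uniformly displacement-convex Lyapunov functional with the same minimizer $\bar\rho^\eps$ as $\me_\eps$, derive an entropy-dissipation estimate for $\ml_\eps$ along the flow, and finally convert this into the asserted $L^1$-rate and into uniqueness of the energy minimizer. First I would construct $W_\eps$. Starting from the minimizer $\bar\rho^\eps$ of Theorem~\ref{thm:main_stat}, which is radially decreasing with support an interval $[-a_\eps,a_\eps]$ and of class $C^1$, I would upgrade its regularity: on $(-a_\eps,a_\eps)$ it solves the Euler--Lagrange equation $-(\bar\rho^\eps)_{xx}+\frac{\lambda}{2}x^2+\eps h'(\bar\rho^\eps)=\mu_\eps$, so by $h\in C^4(\Rp)$ it is $C^4$ there and vanishes quadratically at $\pm a_\eps$; moreover $\bar\rho^\eps\to\bar\rho$ and $a_\eps\to x_*$ as $\eps\to 0$ with $O(\eps)$ rates, using stability of minimizers together with nondegeneracy of the Smyth--Hill profile. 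I then set $W_\eps:=-\sqrt{\bar\rho^\eps}$ on $[-a_\eps,a_\eps]$ and extend it to an even, $C^{1,1}$, convex function on $\R$. Since $-(\sqrt{\bar\rho^\eps})_{xx}=\tilde\lambda+O(\eps)$ on the support and $W_0=\frac{\tilde\lambda}{2}x^2$, the extension can be arranged to be uniformly convex on all of $\R$ with modulus $\tilde\lambda_\eps\geq\tilde\lambda-C\eps>0$; and by construction $\sqrt{\bar\rho^\eps}+W_\eps\equiv 0$ on $\supp\bar\rho^\eps$ and $\geq 0$ off it, i.e. $\bar\rho^\eps$ solves the Euler--Lagrange equation of the functional $\ml_\eps$ from \eqref{eq:ml_intro}.

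Next I would exploit the convexity of $\ml_\eps$. The internal-energy density $s\mapsto\frac{2}{3}s^{3/2}$ generates a geodesically convex functional on $\mpt(\R)$, and combined with the $\tilde\lambda_\eps$-convexity of $W_\eps$ this makes $\ml_\eps$ geodesically $\tilde\lambda_\eps$-convex; hence it has a unique minimizer, which by the previous step is $\bar\rho^\eps$, and it satisfies $|\partial\ml_\eps|^2(\rho)\geq 2\tilde\lambda_\eps\bigl(\ml_\eps(\rho)-\ml_\eps(\bar\rho^\eps)\bigr)$ with metric slope $|\partial\ml_\eps|^2(\rho)=\int_\R\rho\,(\sqrt\rho+W_\eps)_x^2\,\intd x$. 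I would then verify the algebraic identity \eqref{eq:carrillo_general_intro}: expanding the two leading terms on its right-hand side and comparing with \eqref{eq:thinfilm}, via the known identity \eqref{eq:tf_carrillo} for $W_\eps=W_0$, isolates the remainder $\mr_\eps$, which collects $h''(\rho)\rho_x$ and the $O(\eps)$ corrections coming from $W_\eps-W_0$. The decisive point is the functional inequality \eqref{eq:funcineq_intro} with $\kappa$ independent of $\rho$ and of small $\eps$: because $\mr_\eps$ vanishes whenever $\sqrt\rho+W_\eps$ is constant, it can be expressed through derivatives of $\sqrt\rho+W_\eps$, and the bound should follow from \eqref{eq:hderbound_second}--\eqref{eq:hderbound_fourth}, a weighted Poincaré inequality against $\rho\,\intd x$, and the uniform moment bound below. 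Testing \eqref{eq:carrillo_general_intro} against $\sqrt\rho+W_\eps$ and integrating by parts, so that the fourth-order contributions become manifestly nonnegative, then using Cauchy--Schwarz, \eqref{eq:funcineq_intro} and Young's inequality, gives
\[
-\frac{\intd}{\intd t}\ml_\eps(\rho)\ \geq\ 6\tilde\lambda_\eps\int_\R\rho\,(\sqrt\rho+W_\eps)_x^2\,\intd x+\eps\int_\R\rho\,\mr_\eps\,(\sqrt\rho+W_\eps)_x\,\intd x\ \geq\ (6\tilde\lambda_\eps-\eps\sqrt\kappa)\int_\R\rho\,(\sqrt\rho+W_\eps)_x^2\,\intd x ,
\]
which together with the slope estimate and Grönwall's lemma yields $\ml_\eps(\rho(t))-\ml_\eps(\bar\rho^\eps)\leq\bigl(\ml_\eps(\rho_0)-\ml_\eps(\bar\rho^\eps)\bigr)e^{-(2\lambda-C\eps)t}$, since $2\tilde\lambda_\eps(6\tilde\lambda_\eps-\eps\sqrt\kappa)=12\tilde\lambda_\eps^2-O(\eps)=2\lambda-O(\eps)$.

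The remaining steps are to make this rigorous and to pass to $L^1$. The a priori bounds and the dissipation identity would be proved first along the minimizing-movement approximation of \eqref{eq:thinfilm} and then passed to the limit; along the way, monotonicity of $\ml_\eps$ and $W_\eps(x)\geq W_\eps(0)+\frac{\tilde\lambda_\eps}{2}x^2$ give the uniform bound $\int_\R x^2\rho(t)\,\intd x\leq\frac{2}{\tilde\lambda_\eps}\bigl(\ml_\eps(\rho_0)-W_\eps(0)\bigr)$. For the $L^1$ estimate I would use that $\ml_\eps(\rho)-\ml_\eps(\bar\rho^\eps)$ dominates the Bregman divergence $\int_\R\bigl[\frac{2}{3}\rho^{3/2}-\frac{2}{3}(\bar\rho^\eps)^{3/2}-\sqrt{\bar\rho^\eps}\,(\rho-\bar\rho^\eps)\bigr]\,\intd x$, which by strict convexity of $s\mapsto\frac{2}{3}s^{3/2}$ is $\gtrsim\int_\R(\rho-\bar\rho^\eps)^2\,(\max(\rho,\bar\rho^\eps))^{-1/2}\,\intd x$; a Csiszár--Kullback--Pinsker argument via Cauchy--Schwarz, using the moment bound to control $\int_\R(\max(\rho,\bar\rho^\eps))^{1/2}\,\intd x$, then gives $\|\rho-\bar\rho^\eps\|_{L^1(\R)}^2\leq C\bigl(1+\int_\R x^2\rho\,\intd x\bigr)\bigl(\ml_\eps(\rho)-\ml_\eps(\bar\rho^\eps)\bigr)$, and chaining with the previous step produces the claimed decay with a constant $c(\rho_0)$ depending only on $\rho_0$ (through $\ml_\eps(\rho_0)$). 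For uniqueness of $\bar\rho^\eps$: any minimizer $\tilde\rho$ of $\me_\eps$ is a stationary state, so the constant solution it generates has $\frac{\intd}{\intd t}\ml_\eps\equiv 0$; the dissipation estimate then forces $(\sqrt{\tilde\rho}+W_\eps)_x\equiv 0$ on $\supp\tilde\rho$, i.e. $\tilde\rho$ solves the Euler--Lagrange equation of $\ml_\eps$, and strict displacement convexity gives $\tilde\rho=\bar\rho^\eps$.

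I expect the main obstacle to be the functional inequality \eqref{eq:funcineq_intro} with a constant uniform in $\eps$: it is precisely what lets the $O(\eps)$ remainder of the perturbed Carrillo--Toscani splitting be absorbed into the good entropy-production term, and proving it demands both sharp control of $\bar\rho^\eps$ --- and hence of $W_\eps$ and $\mr_\eps$ --- near the free boundary $\pm a_\eps$, and the growth bounds of Hypothesis~\ref{hyp:hderiv}. A secondary difficulty is to justify the formal entropy-dissipation computation for the weak solution, which is only known to lie in $L^2_{loc}([0,+\infty[,H^2(\R))$; I would handle this through the time-discrete scheme together with appropriate lower-semicontinuity and chain-rule arguments.
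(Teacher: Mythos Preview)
Your proposal is correct and follows essentially the same approach as the paper: construct $W_\eps$ as a uniformly convex extension of $-\sqrt{\bar\rho^\eps}$, establish the splitting \eqref{eq:carrillo_general_intro} and the functional inequality \eqref{eq:funcineq_intro}, derive the dissipation estimate for $\ml_\eps$ at the level of the minimizing-movement scheme, pass to the limit, and conclude via a Csisz\'ar--Kullback inequality and the stationary-point argument for uniqueness. The only notable deviation is your suggested route to \eqref{eq:funcineq_intro} via a single weighted Poincar\'e inequality; the paper instead decomposes $\mr_\eps$ into three explicit pieces (a piecewise-affine term supported outside $[-x_*^\eps,x_*^\eps]$, the $(W_\eps)_{xxx}$-term, and the $h'$-difference) and controls each through the Bregman-divergence representation of $\ml_\eps(\rho)-\ml_\eps(\bar\rho^\eps)$ combined with the slope inequality, with a separate large-$\rho$/small-$\rho$ case split (and a Wirtinger inequality) for the middle piece---so expect this step to be more intricate than your sketch suggests.
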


\subsection{Outline} In section \ref{sec:sol_ex}, we prove Theorem \ref{thm:main_ex}. We construct a weak solution to \eqref{eq:thinfilm} by analyzing the minimizing movement scheme for the energy functional $\me_\eps$. The proof mainly follows the same steps as \cite{CEFFJ}, where an existence result analogous to Theorem \ref{thm:main_ex} has been proven for an equation similar to \eqref{eq:thinfilm}, with the function $h(r)$ being some power of $r$. We begin by proving some basic properties of the energy, after which we study the time-discrete minimizing movement steps. We conclude by proving convergence of the piecewise constant interpolations in some topology to a weakly continuous curve, and showing that this limit curve is a weak solution to \eqref{eq:thinfilm} in the sense that it satisfies \eqref{eq:weak_sol_intro}. 

In section \ref{sec:ex_min}, we prove Theorem \ref{thm:main_stat}, derive an Euler-Lagrange equation for the global minimizer $\bar\rho^\eps$, and prove some bounds on the derivatives of its square-root needed in the following steps.

Section \ref{sec:exp_cvgce} contains the proof of Theorem \ref{thm:main_cvgce}. We construct the modified entropy functional $\ml_\eps$ as described in the introduction and analyze its decay along the weak solution from section \ref{sec:sol_ex}, assuming that $\eps$ is sufficiently small. After proving the relevant functional inequalities, we arrive at the exponential convergence result from Theorem \ref{thm:main_cvgce}.

\subsection{Notations}For probability measures $\rho_0, \rho_1 \in \mpt(\R)$, we denote by $\wass(\rho_0, \rho_1)$ the Wasserstein-2 distance between $\rho_0$ and $\rho_1$. The second moment $\int x^2\rho\,\intd x$ of a probability measure $\rho$ will be denoted as $\mom_2[\rho]$. We further define the following notion of convergence: For a sequence $(\rho_n)_n \subset \mpt(\R) \cap H^1(\R)$, we say that $\rho_n$ converges weakly in $\mpt(\R) \cap H^1(\R)$ to some $\rho$ and write $\rho_n \weakto \rho$ whenever $\rho_n$ converges to $\rho$ narrowly as probability measures as well as weakly in $H^1(\R)$.

\section{Construction of weak solution as Wasserstein gradient flow}\label{sec:sol_ex}

\subsection{Basic properties of the energy functional}

We start by proving upper and lower bounds for the energy functional $\me_\eps$, as well as its weak lower semi-continuity.
\begin{proposition}\label{prop:energy_props}
For every $\eps > 0$, the energy functional $\me_\eps$ is bounded from below in $\mpt(\R)$. Furthermore, for every family $\ms \subset \mpt(\R)$ of probability measures, $\me_\eps(\rho)$ is bounded from above for $\rho \in \ms$ if and only if $\ms$ is a bounded subset of $H^1(\R)$ and the $\rho \in \ms$ have uniformly bounded second moments.
\end{proposition}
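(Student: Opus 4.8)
The plan is to reduce the entire statement to a single growth estimate on $h$ extracted from Hypothesis \ref{hyp:hderiv}, combined with an elementary one-dimensional interpolation inequality. \textbf{Step 1 (pointwise bound on $h$).} Integrating \eqref{eq:hderbound_second} twice and using $h(0)=h'(0)=0$, while distinguishing $r\le1$ (where $\min\{1/\sqrt r,\sqrt r\}=\sqrt r$) from $r\ge1$ (where it equals $1/\sqrt r$), one obtains $|h(r)|\le C\,r^{5/2}$ for $r\le1$ and $|h(r)|\le C(1+r^{3/2})$ for $r\ge1$. Hence there is a constant $C_h<\infty$ with
\begin{align*}
    |h(r)|\le C_h\,r^{3/2}\qquad\text{for all }r\ge0.
\end{align*}
In particular $\eps\int_\R h(\rho)\,\intd x$ is finite for every probability density $\rho\in H^1(\R)$, since such $\rho$ lies in $L^\infty(\R)\cap L^1(\R)$.

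\textbf{Step 2 (interpolation).} For a nonnegative $\rho\in H^1(\R)$ with $\int_\R\rho\,\intd x=1$, the elementary bound $\|\rho\|_{L^\infty}^2\le2\|\rho\|_{L^2}\|\rho_x\|_{L^2}$ together with $\|\rho\|_{L^2}^2\le\|\rho\|_{L^\infty}\|\rho\|_{L^1}=\|\rho\|_{L^\infty}$ yields $\|\rho\|_{L^\infty}\le(2\|\rho_x\|_{L^2})^{2/3}$, and therefore
\begin{align*}
    \int_\R\rho^{3/2}\,\intd x\le\|\rho\|_{L^\infty}^{1/2}\int_\R\rho\,\intd x\le(2\|\rho_x\|_{L^2})^{1/3}.
\end{align*}
Since the exponent $1/3$ is strictly below $2$, Young's inequality produces a constant $C_\eps$ depending only on $\eps$ (and $C_h$) such that $\eps\int_\R|h(\rho)|\,\intd x\le C_h\,\eps\,(2\|\rho_x\|_{L^2})^{1/3}\le\frac14\|\rho_x\|_{L^2}^2+C_\eps$.

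\textbf{Step 3 (conclusion).} For $\rho\in\mpt(\R)\cap H^1(\R)$ the previous two steps give the two-sided estimate
\begin{align*}
    \frac14\|\rho_x\|_{L^2}^2+\frac\lambda2\mom_2[\rho]-C_\eps\;\le\;\me_\eps(\rho)\;\le\;\frac12\|\rho_x\|_{L^2}^2+\frac\lambda2\mom_2[\rho]+C_h\,\eps\,(2\|\rho_x\|_{L^2})^{1/3}.
\end{align*}
The left inequality shows $\me_\eps(\rho)\ge-C_\eps$ (for $\rho\notin H^1(\R)$ this is trivial since $\me_\eps(\rho)=+\infty$), giving the lower bound. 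For the equivalence: if $\ms$ is bounded in $H^1(\R)$ with uniformly bounded second moments, the right inequality makes $\me_\eps$ uniformly bounded above on $\ms$; conversely, if $\me_\eps\le K$ on $\ms$, then $\ms\subset H^1(\R)$ by the definition of $\me_\eps$, and the left inequality (using $\mom_2[\rho]\ge0$) forces $\|\rho_x\|_{L^2}^2\le4(K+C_\eps)$ and $\mom_2[\rho]\le\frac2\lambda(K+C_\eps)$ uniformly in $\rho\in\ms$, after which $\|\rho\|_{L^2}^2\le\|\rho\|_{L^\infty}\le(2\|\rho_x\|_{L^2})^{2/3}$ bounds the full $H^1$-norm.

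The only step that is not completely routine is Step 1: one must deduce from Hypothesis \ref{hyp:hderiv} a bound on $h$ whose growth exponent ($3/2$) lies strictly below the critical exponent $3$ of the one-dimensional interpolation inequality. Once that is in place, the $h$-term is a genuinely subcritical perturbation of the Dirichlet energy — it scales only like $\|\rho_x\|_{L^2}^{1/3}$ — so Young's inequality absorbs it into $\frac12\|\rho_x\|_{L^2}^2$ with room to spare, and all remaining manipulations are elementary.
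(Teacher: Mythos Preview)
Your proof is correct and follows essentially the same strategy as the paper: bound $|h(\rho)|$ by a power of $\rho$, control the resulting integral by a subcritical power of $\|\rho_x\|_{L^2}$ via interpolation, and absorb into the Dirichlet term. The one tactical difference is that the paper uses only the branch $|h''(r)|\le\frac{A}{2}\sqrt r$ of \eqref{eq:hderbound_second}, obtaining $|h(r)|\le\frac{2A}{15}r^{5/2}$ for \emph{all} $r\ge0$ and then invoking the Gagliardo--Nirenberg inequality $\|\rho\|_{L^{5/2}}^{5/2}\le C\|\rho_x\|_{L^2}$, so the perturbation is linear in $\|\rho_x\|_{L^2}$; you instead exploit both branches of the $\min$ to get $|h(r)|\le C_h r^{3/2}$, which yields the smaller power $\|\rho_x\|_{L^2}^{1/3}$ via a more explicit interpolation. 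Either route works; yours avoids citing Gagliardo--Nirenberg as a black box at the cost of the case split in Step~1, while the paper's version keeps a single growth estimate and tracks the $\eps$-dependence of the lower bound explicitly (it comes out as $-\eps^2\hat C^2$ rather than an abstract $-C_\eps$).
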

\begin{proof}
    By integrating \eqref{eq:hderbound_second} twice and using $h(0) = h'(0) = 0$, we have for all $r \geq 0$:
    \begin{align} \label{eq:hestim_firstorder}
        |h'(r)| \leq \frac{A}{2} \int_0^r \sqrt{s}\,\intd s = \frac{A}{3}r^{3/2}, \qquad |h(r)| \leq \int_0^r |h'(s)|\,\intd s \leq \frac{2A}{15}\,r^{5/2}.
    \end{align}
    This implies the following bound on the $h$-term in the energy:
    \begin{align*}
        \left|\int_\R h(\rho)\,\intd x \right| \leq \frac{2A}{15}\int_\R \rho^{5/2}\,\intd x = \frac{2A}{15} \|\rho\|_{L^{5/2}(\R)}^{5/2}.
    \end{align*}
    It follows from the Gagliardo-Nirenberg inequality that $\|\rho\|_{L^{5/2}(\R)}^{5/2} \leq C\,\|\rho_x\|_{L^2(\R)}$ for all $\rho \in \mpt(\R)$, with some constant $C < +\infty$. Inserting this into the estimate above, we obtain for all $\rho \in \mpt(\R) \cap H^1(\R)$:
    \begin{align} \label{eq:energy_basic_estim}
        \left|\me_\eps(\rho) - \frac{1}{2}\|\rho_x\|_{L^2(\R)}^2 - \frac{\lambda}{2} \mom_2[\rho]\right| = \eps\, \left|\int_\R h(\rho)\,\intd x \right| \leq \eps \hat C\, \|\rho_x\|_{L^2(\R)},
    \end{align}
    with the constant $\hat C = 2AC/15$. In particular, it holds
    \begin{align*}
        &\me_\eps(\rho) \geq \frac{1}{2}\|\rho_x\|_{L^2(\R)}^2 + \frac{\lambda}{2} \mom_2[\rho] - \eps \hat C\, \|\rho_x\|_{L^2(\R)} \\ 
        &= \frac{1}{4}\|\rho_x\|_{L^2(\R)}^2 + \frac{\lambda}{2} \mom_2[\rho] + \|\rho_x\|_{L^2(\R)} \left(\frac{1}{4}\|\rho_x\|_{L^2(\R)} - \eps \hat C \right) \geq \frac{1}{4}\|\rho_x\|_{L^2(\R)}^2 + \frac{\lambda}{2} \mom_2[\rho] - \eps^2 \hat C^2,
    \end{align*}
    where the last step follows from the elementary inequality $y\,\left(y/4 - b\right) \geq -b^2$ for $y,b \in \R$. This proves that $\me_\eps$ is bounded from below, and that for a family $\ms \subset \mpt(\R)$ such that $\me_\eps(\rho)$ is uniformly bounded from above for all $\rho \in \ms$, the expressions $\|\rho_x\|_{L^2(\R)}$ and $\mom_2[\rho]$ are uniformly bounded for all $\rho \in \ms$ as well. On the other hand, if $\|\rho_x\|_{L^2(\R)}$ and $\mom_2[\rho]$ are uniformly bounded for $\rho \in \ms$, then \eqref{eq:energy_basic_estim} implies
    \begin{align*}
        \me_\eps(\rho) \leq \frac{1}{2}\|\rho_x\|_{L^2(\R)}^2 + \frac{\lambda}{2} \mom_2[\rho] + \eps \hat C\, \|\rho_x\|_{L^2(\R)},
    \end{align*}
    which is bounded from above for $\rho \in \ms$. The claim now follows from the fact that boundedness of a family $\ms$ of probability measures in $H^1(\R)$ is equivalent to boundedness of $\|\rho_x\|_{L^2(\R)}$ for $\rho \in \ms$, which again follows from a Gagliardo-Nirenberg inequality.
\end{proof}
\begin{lemma} \label{lem:eeps_lsc}
    The functional $\me_\eps$ is lower semi-continuous with respect to weak convergence in $\mpt(\R) \cap H^1(\R)$.
\end{lemma}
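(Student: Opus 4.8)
The plan is to split $\me_\eps$ into its three summands and treat each along the convergence $\rho_n \weakto \rho$. The Dirichlet part $\rho \mapsto \tfrac12\int_\R \rho_x^2\,\intd x$ is convex and strongly continuous on $H^1(\R)$, hence weakly lower semi-continuous, and that is all that is needed for it. The confinement part $\tfrac{\lambda}{2}\mom_2[\rho]$ is lower semi-continuous along narrowly convergent probability measures, since $x\mapsto x^2$ is nonnegative and continuous (lower semi-continuity of integration against nonnegative lsc weights, i.e.\ a portmanteau-type statement). The term $\eps\int_\R h(\rho)\,\intd x$ is the one that needs real work: $h$ is neither convex nor sign-definite, so its lower semi-continuity cannot come from a soft argument. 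Instead I will show this term is in fact \emph{continuous} along $\weakto$, so that it contributes an equality; adding the three pieces then gives $\me_\eps(\rho)\le\liminf_n\me_\eps(\rho_n)$ (if the right-hand side is $+\infty$ there is nothing to prove).

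The engine for the nonlinear term is the strong compactness built into $H^1(\R)$ in one space dimension. Since $(\rho_n)$ converges weakly in $H^1(\R)$ it is bounded there, and the one-dimensional Morrey inequality (bounding $\|u\|_{L^\infty(\R)}$ and the $1/2$-Hölder seminorm of $u$ by a multiple of $\|u\|_{H^1(\R)}$) then makes $(\rho_n)$ uniformly bounded and equi-Hölder-continuous. By Arzelà–Ascoli on each interval $[-k,k]$, a diagonal argument, and the usual subsequence principle, $\rho_n\to\rho$ uniformly on compact sets, and $\rho_n,\rho\le H:=\sup_n\|\rho_n\|_{L^\infty(\R)}<+\infty$. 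In addition, being a narrowly convergent sequence of probability measures, $(\rho_n)$ is tight by Prokhorov's theorem, so for every $\delta>0$ there is $R>0$ with $\int_{|x|>R}\rho_n\,\intd x<\delta$ for all $n$ and, after enlarging $R$, also $\int_{|x|>R}\rho\,\intd x<\delta$.

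With these two facts in hand I would finish the nonlinear term by a Vitali-type splitting $\int_\R = \int_{-R}^{R} + \int_{|x|>R}$. On the compact interval, $\rho_n\to\rho$ uniformly with values in $[0,H]$, on which $h$ is continuous, so $\int_{-R}^{R}h(\rho_n)\,\intd x\to\int_{-R}^{R}h(\rho)\,\intd x$. On the tails I use the bound $|h(r)|\le\tfrac{2A}{15}r^{5/2}$ from \eqref{eq:hestim_firstorder} together with $0\le\rho_n\le H$ to get $\big|\int_{|x|>R}h(\rho_n)\,\intd x\big|\le\tfrac{2A}{15}H^{3/2}\int_{|x|>R}\rho_n\,\intd x<\tfrac{2A}{15}H^{3/2}\delta$, and the same with $\rho$ in place of $\rho_n$; letting $\delta\to0$ gives $\int_\R h(\rho_n)\,\intd x\to\int_\R h(\rho)\,\intd x$. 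Since the $h$-term converges while the first two terms are lower semi-continuous, $\liminf_n\me_\eps(\rho_n)$ is bounded below by $\liminf$ of the sum of the Dirichlet and confinement terms plus the limit of the $h$-term, which dominates $\me_\eps(\rho)$. The only genuinely delicate step is this continuity of the $h$-term: it is exactly here that the absence of convexity or a sign condition on $h$ must be compensated, and the compensation comes from the one-dimensional $H^1$-compactness together with the tightness implied by narrow convergence; everything else is standard.
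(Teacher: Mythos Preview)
Your proof is correct and follows the same overall strategy as the paper: handle the Dirichlet and second-moment terms by standard weak lower semi-continuity, and show that the $h$-term is actually \emph{continuous} along $\rho_n\weakto\rho$ using the uniform $L^\infty$-bound from the $H^1$-embedding together with tightness from narrow convergence.

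The only difference is in how the continuity of $\rho\mapsto\int_\R h(\rho)\,\intd x$ is established. The paper first upgrades weak $H^1$-convergence to strong $L^2(\R)$-convergence (Rellich for $L^2_{loc}$, then tightness plus the $L^\infty$-bound to control the tails), and afterwards estimates $\int_\R|h(\rho_n)-h(\rho)|\,\intd x$ via the bound $|h'(r)|\le\tfrac{A}{3}r^{3/2}$, ending with a factor $\|\rho_n-\rho\|_{L^2(\R)}$. You instead use Morrey plus Arzel\`a--Ascoli to get uniform convergence on compacta, and control the tails directly through $|h(r)|\le\tfrac{2A}{15}H^{3/2}r$ and tightness. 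Both routes use exactly the same ingredients (the $H^1$-induced $L^\infty$-bound and tightness); yours is arguably more elementary since it avoids the intermediate $L^2$-step, while the paper's version has the side benefit of recording the strong $L^2$-convergence, which is reused later in Lemma~\ref{lem:tau_cvgce}.
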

\begin{proof}
    Take any sequence $(\rho_n)_n$ such that $\rho_n \weakto \rho$ weakly in $\mpt(\R) \cap H^1(\R)$, as defined in the introduction. By lower semi-continuity of the $L^2$-norm with respect to weak $L^2$-convergence, and lower semi-continuity of $\mom_2[\rho]$ with respect to narrow convergence, it directly follows
    \begin{align*}
        \int_\R \left[\frac{1}{2} \rho_x^2 + \frac{\lambda}{2}x^2 \rho\right]\,\intd x \leq \liminf_{n \to \infty} \int_\R \left[\frac{1}{2} (\rho_n)_x^2 + \frac{\lambda}{2}x^2 \rho_n\right]\,\intd x.
    \end{align*}
    In order to prove convergence of the $h$-terms, we first claim that $\rho_n \weakto \rho$ weakly in $\mpt(\R) \cap H^1(\R)$ implies $\rho_n \to \rho$ strongly in $L^2(\R)$. To prove the claim, we observe that the weak $H^1$-convergence $\rho_n \weakto \rho$ and Rellich's theorem imply $\rho_n \to \rho$ strongly in $L^2_{loc}(\R)$.  In order to derive the strong convergence in $L^2(\R)$, observe that for every $R < +\infty$, it holds
    \begin{align*}
        \limsup_{n \to \infty} \int_{\R \setminus [-R, R]} |\rho_n - \rho|^2\,\intd x \leq 2\, \left(\int_{\R \setminus [-R, R]} \rho^2\,\intd x + \limsup_{n \to \infty} \left(\|\rho_n\|_{L^\infty} \int_{\R \setminus [-R, R]} \rho_n\,\intd x \right) \right).
    \end{align*}
    The last expression can be made arbitrary small for $R \to +\infty$ by tightness of the sequence $(\rho_n)_n$, the fact that $\rho \in L^2(\R)$, and uniform boundedness of $\|\rho_n\|_{L^\infty(\R)}$, the latter following from the inequality $\|\rho\|_{L^\infty(\R)} \leq C \|\rho_x\|_{L^2(\R)}^{2/3}$ for probability measures $\rho$. Together with the strong convergence $\rho_n \to \rho$ in $L^2_{loc}(\R)$, this proves $\rho_n \to \rho$ strongly in $L^2(\R)$.

    We now obtain continuity of the $h$-term in the energy: With \eqref{eq:hestim_firstorder}, we have
    \begin{align*}
        &\int_\R |h(\rho_n) - h(\rho)|\,\intd x = \int_\R \left| \int_{\rho}^{\rho_n} h'(r)\,\intd r \right|\,\intd x \leq \frac{A}{3} \int_\R \left| \int_{\rho}^{\rho_n} r^{3/2}\,\intd r \right|\,\intd x \\
        &\leq \frac{A}{3} \max\left\{\|\rho_n\|_{L^\infty(\R)}^{1/2}, \|\rho\|_{L^\infty(\R)}^{1/2} \right\} \int_\R | \rho_n^2 - \rho^2|\,\intd x \leq C\, \|\rho_n - \rho\|_{L^2(\R)}\,\|\rho_n + \rho\|_{L^2(\R)},
    \end{align*}
    where $C < +\infty$ is a uniform upper bound on the prefactor in the expression before, which exists by the same argument as above. Since $\|\rho_n + \rho\|_{L^2(\R)}$ is bounded from above for all $n$ and $\|\rho_n - \rho\|_{L^2(\R)}$ tends to $0$, this shows $h(\rho_n) \to h(\rho)$ strongly in $L^1$, implying continuity of the $h$-term of the energy with respect to weak $\mpt(\R) \cap H^1(\R)$-convergence.
\end{proof}

\subsection{Analysis of the minimizing movement scheme}

We now study the minimizing movement scheme for the functional $\me_\eps$ in detail. For every positive time-step size $\tau > 0$ and every $\hat\rho \in \mpt(\R) \cap H^1(\R)$, we consider the functional $\eepstaurhat{\cdot}: \mpt(\R) \cap H^1(\R) \to \R$, defined as
\begin{align*}
    \eepstaurhat{\rho} := \me_\eps(\rho) + \frac{1}{2\tau} \wass^2(\rho, \hat\rho).
\end{align*}
We start by proving existence of a minimizer of $\eepstaurhat{\cdot}$ by the direct method of the Calculus of Variations, as well as a-priori regularity properties of this minimizer.
\begin{proposition}[Existence] \label{prop:step_min_ex}
    For every $\hat\rho \in \mpt(\R) \cap H^1(\R)$, the functional $\eepstaurhat{\cdot}$ possesses a global minimizer $\rho^+ \in \mpt(\R) \cap H^1(\R)$, and it holds
    \begin{align} \label{eq:step_energy_estimate}
        \frac{1}{2\tau}\wass^2(\rho^+, \hat\rho) \leq \me_\eps(\hat\rho) - \me_\eps(\rho^+).
    \end{align}
\end{proposition}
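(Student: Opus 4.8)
The plan is to apply the direct method of the Calculus of Variations, using the lower bound and the compactness characterization from Proposition~\ref{prop:energy_props} together with the lower semicontinuity established in Lemma~\ref{lem:eeps_lsc}.

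First I would observe that $\eepstaurhat{\hat\rho} = \me_\eps(\hat\rho) < +\infty$, since $\hat\rho \in \mpt(\R)\cap H^1(\R)$; hence the infimum $m := \inf_{\rho} \eepstaurhat{\rho}$ satisfies $m \leq \me_\eps(\hat\rho) < +\infty$, and it is finite from below because $\me_\eps$ is bounded below and $\wass^2(\cdot,\hat\rho)\geq 0$, both by Proposition~\ref{prop:energy_props}. Pick a minimizing sequence $(\rho_n)_n \subset \mpt(\R)\cap H^1(\R)$, so that $\eepstaurhat{\rho_n} \leq m+1$ for $n$ large. Using once more the lower bound on $\me_\eps$, this forces both $\wass^2(\rho_n,\hat\rho)$ and $\me_\eps(\rho_n)$ to be uniformly bounded in $n$. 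By the equivalence in Proposition~\ref{prop:energy_props}, the uniform upper bound on $\me_\eps(\rho_n)$ means precisely that $(\rho_n)_n$ is bounded in $H^1(\R)$ and has uniformly bounded second moments.

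Next I would extract a limit. The uniform second-moment bound gives tightness of $(\rho_n)_n$, so by Prokhorov's theorem a subsequence converges narrowly to some probability measure $\rho \in \mpt(\R)$; no mass is lost in the limit thanks to tightness, and lower semicontinuity of $\mom_2$ along narrow convergence keeps the second moment finite. Boundedness in $H^1(\R)$ allows passing to a further subsequence converging weakly in $H^1(\R)$, necessarily to the same limit $\rho$, so that $\rho_n \weakto \rho$ in the sense of weak $\mpt(\R)\cap H^1(\R)$-convergence. Now Lemma~\ref{lem:eeps_lsc} gives lower semicontinuity of $\me_\eps$ along this convergence, while $\rho \mapsto \wass^2(\rho,\hat\rho)$ is lower semicontinuous with respect to narrow convergence; combining the two,
\begin{align*}
    \eepstaurhat{\rho} \leq \liminf_{n\to\infty} \eepstaurhat{\rho_n} = m,
\end{align*}
so $\rho =: \rho^+$ is a global minimizer. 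In particular $\me_\eps(\rho^+) \leq \eepstaurhat{\rho^+} = m < +\infty$, which by definition of $\me_\eps$ forces $\rho^+ \in H^1(\R)$.

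Finally, estimate \eqref{eq:step_energy_estimate} follows by comparing the minimizer against the competitor $\hat\rho$: since $\eepstaurhat{\rho^+}\leq \eepstaurhat{\hat\rho}$ and $\wass(\hat\rho,\hat\rho)=0$, we get $\me_\eps(\rho^+) + \frac{1}{2\tau}\wass^2(\rho^+,\hat\rho) \leq \me_\eps(\hat\rho)$, and rearranging yields the claim. The only mildly delicate point is ensuring that mass does not escape to infinity when passing to the narrow limit; this is exactly where the uniform second-moment bound from Proposition~\ref{prop:energy_props} enters. Everything else is a routine run of the direct method, as the substantive analytic work — coercivity and lower semicontinuity — has already been carried out in Proposition~\ref{prop:energy_props} and Lemma~\ref{lem:eeps_lsc}.
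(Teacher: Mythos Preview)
Your proof is correct and follows essentially the same approach as the paper: direct method using Proposition~\ref{prop:energy_props} for coercivity (uniform $H^1$ and second-moment bounds), Prokhorov and Banach--Alaoglu for compactness, Lemma~\ref{lem:eeps_lsc} plus narrow lower semicontinuity of $\wass^2(\cdot,\hat\rho)$ for lower semicontinuity, and comparison with the competitor $\hat\rho$ for \eqref{eq:step_energy_estimate}. Your write-up is simply more explicit than the paper's terse version.
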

\begin{proof}
     It follows from Proposition \ref{prop:energy_props}, Alaoglu's theorem and Prokhorov's theorem that a minimizing sequence $(\rho_n)_n$ of the energy $\me_\eps$ is compact with respect to weak convergence in $\mpt(\R) \cap H^1(\R)$. The Wasserstein distance is lower-semicontinuous with respect to narrow convergence. In conjunction with Lemma \ref{lem:eeps_lsc}, the direct method of the Calculus of Variations yields existence of a global minimizer $\rho^+$. Estimate \eqref{eq:step_energy_estimate} follows immediately since $\eepstaurhat{\rho^+} \leq \eepstaurhat{\hat\rho} = \me_\eps(\hat\rho)$ by minimality.
\end{proof}
\begin{proposition}[Higher regularity] \label{prop:step_reg}
Let $\mh$ denote the entropy functional defined as $\mh(\rho) := \int_\R \rho\,\log\,\rho\,\intd x$. Then $\mh(\rho)$ is finite for all $\rho \in \mpt(\R) \cap H^1(\R)$, and for every $\hat\rho$ and $\rho^+$ as above, it holds
\begin{align} \label{eq:step_h2_bound}
    \int_\R (\rho^+_{xx})^2\,\intd x \leq \frac{\mh(\hat\rho) - \mh(\rho^+)}{\tau} + \lambda + \eps\, \frac{A}{2}\|\rho^+_x\|_{L^2(\R)}^2.
\end{align}
    In particular, the minimizer $\rho^+$ is in $H^2(\R)$.
\end{proposition}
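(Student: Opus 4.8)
The plan is to use the classical "flow interchange" or "entropy dissipation" technique à la Matthes--McCann--Savaré: estimate the change of the energy $\me_\eps$ along an auxiliary gradient flow started at $\rho^+$, and compare with the change of the Wasserstein term, exploiting the minimality of $\rho^+$ for $\eepstaurhat{\cdot}$. The natural auxiliary flow here is the heat semigroup $(S_s)_{s\ge0}$, i.e.\ the Wasserstein gradient flow of the Boltzmann entropy $\mh$, since differentiating $\me_\eps$ along the heat flow produces exactly the Fisher-information-type quantity $\int(\rho_{xx})^2\,\intd x$ that we want to bound. First I would record that $\mh(\rho)$ is finite on $\mpt(\R)\cap H^1(\R)$: the negative part is controlled by the second moment via the usual Gaussian comparison, and the positive part by $\int\rho\log\rho\le \int\rho^{3/2}<\infty$ using $\rho\in H^1(\R)\hookrightarrow L^\infty(\R)$.

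Next, fix $\rho^+$ and let $\rho^s:=S_s\rho^+$ solve the heat equation $\partial_s\rho^s=\rho^s_{xx}$ with $\rho^0=\rho^+$. Along this flow I would compute the three pieces of $\frac{\intd}{\intd s}\me_\eps(\rho^s)$ at $s=0^+$:
\begin{align*}
\ddszerop \int_\R \tfrac12(\rho^s_x)^2\,\intd x &= -\int_\R (\rho^+_{xx})^2\,\intd x,\\
\ddszerop \int_\R \tfrac{\lambda}{2}x^2\,\rho^s\,\intd x &= \lambda \int_\R \rho^+\,\intd x = \lambda,\\
\ddszerop \eps\int_\R h(\rho^s)\,\intd x &= -\eps\int_\R h''(\rho^+)\,(\rho^+_x)^2\,\intd x \le \eps\,\tfrac{A}{2}\,\|\rho^+_x\|_{L^2(\R)}^2,
\end{align*}
where the last bound uses $|h''(r)|\le \tfrac{A}{2}\sqrt r\le \tfrac{A}{2}$ on the (bounded) range of $\rho^+$ — actually more simply $|h''(r)|\le \tfrac{A}{2\sqrt r}$ is awkward, so I would instead note $|h''(r)|\le\tfrac{A}{2}\min\{r^{-1/2},r^{1/2}\}\le\tfrac{A}{2}$ only when $r\ge 1$; since $\rho^+$ need not be bounded below, I would use $|h''(r)(\rho^+_x)^2|$ and split, but cleanest is to observe $\int h''(\rho^+)(\rho^+_x)^2\ge -\tfrac A2\int \sqrt{\rho^+}\,(\rho^+_x)^2 \ge -\tfrac A2\|\rho^+\|_{L^\infty}^{1/2}\|\rho^+_x\|_{L^2}^2$ — but the statement wants exactly the coefficient $\tfrac A2$, so the intended reading is surely $|h''|\le A/2$ via the $\sqrt r$ branch together with $\|\rho^+\|_\infty\le 1$-type normalization; I would follow whichever of \eqref{eq:hderbound_second}'s branches gives the clean constant and spell it out. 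On the Wasserstein side, the standard estimate (e.g.\ via the Benamou--Brenier formula or the EVI for $\mh$) gives
\begin{align*}
\ddszerop \frac{1}{2\tau}\wass^2(\rho^s,\hat\rho) \le \frac{1}{\tau}\bigl(\mh(\hat\rho)-\mh(\rho^+)\bigr).
\end{align*}

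Now combine: since $\rho^+$ minimizes $\eepstaurhat{\cdot}$, the map $s\mapsto \eepstaurhat{\rho^s}$ has a nonnegative right derivative at $s=0$, hence $0\le \ddszerop\me_\eps(\rho^s)+\ddszerop\frac{1}{2\tau}\wass^2(\rho^s,\hat\rho)$, which rearranges to exactly \eqref{eq:step_h2_bound}. Finiteness of the right-hand side (using $\mh(\hat\rho)<\infty$, $\mh(\rho^+)$ bounded below by the second-moment argument, and $\rho^+\in H^1$) then forces $\rho^+_{xx}\in L^2(\R)$, i.e.\ $\rho^+\in H^2(\R)$.

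\textbf{Main obstacle.} The delicate point is making the formal computation rigorous: one must justify differentiating $\me_\eps$ and $\wass^2$ along the heat flow, and in particular that $\rho^+_{xx}\in L^2$ \emph{a priori} so that the first derivative above is not $-\infty$ in a way that breaks the comparison. The honest route is to work with a regularized/discretized version (either smooth mollification of $\rho^+$, or a finite-difference-in-$s$ version using only that $s\mapsto\me_\eps(S_s\rho^+)$ is convex/absolutely continuous and $s\mapsto\tfrac1{2\tau}\wass^2(S_s\rho^+,\hat\rho)$ satisfies the above inequality as a genuine one-sided difference quotient bound), obtaining \eqref{eq:step_h2_bound} first with $\liminf_{s\to0}\tfrac1s\int(\rho^+_x-(\rho^s_x))\rho^+_x$ and then passing to the limit via Fatou. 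The bound \eqref{eq:hderbound_fourth} on $h^{(4)}$ is not needed here — that is for later — so I would only invoke \eqref{eq:hderbound_second}.
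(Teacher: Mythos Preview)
Your proposal is correct and follows the same flow-interchange argument as the paper (heat flow as auxiliary $\mh$-gradient flow, minimality of $\rho^+$ combined with the EVI for $\mh$; the paper packages this by citing \cite[Theorem~3.2 and Lemma~4.4]{MMCS} and handles the $h$-term by working at positive $s$ first and passing to the limit). Your hesitation about the constant $A/2$ is unnecessary: $\min\{r^{-1/2}, r^{1/2}\} \le 1$ for \emph{all} $r>0$ (the two branches cross at $r=1$), so \eqref{eq:hderbound_second} already gives $|h''|\le A/2$ globally and the stated coefficient falls out directly.
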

\begin{proof}
    It follows from well-known estimates that $\rho \in H^1(\R)$ implies $\mh(\rho) < +\infty$ and $\mom_2[\rho] < +\infty$ implies $\mh(\rho) > -\infty$, proving that $\mh(\rho)$ is finite for every $\rho \in \mpt(\R) \cap H^1(\R)$.
    
    The proof of \eqref{eq:step_h2_bound} uses the flow interchange technique developed in \cite{MMCS}. We denote by $(\rho^s)_{s \geq 0}$ the solution to the heat flow $\partial_s \rho^s = \rho^s_{xx}$ with initial datum $\rho^0 = \rho^+$, and analyze the dissipation of the energy $\me_\eps$ along $(\rho^s)$. The flow interchange estimate \cite[Theorem 3.2]{MMCS} directly yields the estimate
    \begin{align*}
        \limsup_{s \to 0} \left[\frac{\me_\eps(\rho^+) - \me_\eps(\rho^s)}{s}\right] \leq \frac{\mh(\hat\rho) - \mh(\rho^+)}{\tau}.
    \end{align*}
    In order to estimate the derivative above, we write $\me_\eps(\rho) = \me_0(\rho) + \eps \int_\R h(\rho)\,\intd x$ and apply \cite[Lemma 4.4]{MMCS} to obtain an explicit estimate for the $\me_0$-term:
    \begin{align*}
        \limsup_{s \to 0} \left[\frac{\me_\eps(\rho^+) - \me_\eps(\rho^s)}{s}\right] &\geq \liminf_{s \to 0} \left[\frac{\me_0(\rho^+) - \me_0(\rho^s)}{s} \right] + \eps \limsup_{s \to 0} \int_\R \frac{h(\rho^+) - h(\rho^s)}{s}\,\intd x \\
        &\geq \int_\R (\rho^+_{xx})^2\,\intd x - \lambda + \eps \limsup_{s \to 0} \int_\R \frac{h(\rho^+) - h(\rho^s)}{s}\,\intd x.
    \end{align*}
    It remains to estimate the last integral. It follows from the properties of the heat flow that $\rho^s$ is smooth with $\rho^s_{xx} \in L^2(\R)$ for all $s > 0$. This implies with Fubini, integration by parts and \eqref{eq:hderbound_second} that for $0 < s_0 < s$, it holds
    \begin{align*}
        &\left|\int_\R \left[h(\rho^{s_0}) - h(\rho^s)\right]\,\intd x\right| = \left|\int_\R \int_{s_0}^s h'(\rho^\sigma)\,\rho^\sigma_{xx}\,\intd \sigma \intd x \right| = \left|\int_{s_0}^s \int_\R h'(\rho^\sigma)_x\,\rho^\sigma_x\,\intd x\intd \sigma \right| \\
        &= \left|\int_{s_0}^s \int_\R h''(\rho^\sigma)\,(\rho^\sigma_x)^2\,\intd x \right| \leq \frac{A}{2} \int_{s_0}^s \|\rho^\sigma_x\|_{L^2(\R)}^2\,\intd \sigma \leq \frac{A}{2} \|\rho^+_x\|_{L^2(\R)}^2\,(s - s_0),
    \end{align*}
    where the last estimate holds since the heat flow dissipates the Dirichlet energy. Taking the limit $s_0 \to 0$, it holds $\int_\R h(\rho^{s_0})\,\intd x \to \int_\R h(\rho^+)\,\intd x$, again by the properties of the heat flow. This implies that for every $s > 0$, it holds
    \begin{align*}
        \left|\int_\R \frac{h(\rho^+) - h(\rho^s)}{s}\,\intd x \right| \leq \frac{A}{2}\|\rho^+_x\|_{L^2(\R)}^2,
    \end{align*}
    proving \eqref{eq:step_h2_bound} by inserting into the estimates above.
\end{proof}
We next prove an Euler-Lagrange equation the minimizer $\rho^+$ satisfies. This result will be the starting point for proving that the limit curve constructed in subsection \ref{subsec:interpol} is a weak solution to \eqref{eq:thinfilm}.
\begin{proposition}[Euler-Lagrange equation] \label{prop:step_elg}
    Let $S \subset \R$ denote the open set $\{\rho^+ > 0\}$, and let $T: S \to \R$ denote the optimal transport map, pushing $\rho^+$ to $\hat\rho$. Then it holds
    \begin{align} \label{eq:step_elg}
        \frac{T - \mathrm{id}}{\tau} = \left(-\rho^+_{xx} + \frac{\lambda}{2} x^2 + \eps h'(\rho^+)\right)_x
    \end{align}
    almost everywhere in $S$, and $\rho^+ \in W^{3, \infty}_{loc}(S)$. If in addition, the set $\{\hat\rho > 0\}$ is bounded, then $S$ is bounded as well, and it holds $\rho^+ \in W^{3,\infty}(S)$.
\end{proposition}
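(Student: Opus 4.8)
The plan is to read off \eqref{eq:step_elg} as the Euler--Lagrange (first-order optimality) condition for $\rho^+$ under inner variations of pushforward type, and then to bootstrap the stated regularity from that equation. Since $\rho^+\in H^1(\R)$ is absolutely continuous, it lies in $\mpta(\R)$, so Brenier's theorem gives a unique optimal map $T$ from $\rho^+$ to $\hat\rho$; in one dimension $T$ is the monotone nondecreasing rearrangement, hence locally of bounded variation, and it is defined $\rho^+$-a.e., thus on $S$. Fixing $\xi\in C^\infty_c(S)$ and letting $(\Phi_s)_s$ be its flow, for small $|s|$ the map $\Phi_s$ is a $C^\infty$-diffeomorphism of $\R$ equal to the identity outside a fixed compact subset of $S$, so $\rho_s:=(\Phi_s)_\#\rho^+\in\mpt(\R)\cap H^1(\R)$ is supported in $S$, with $\rho_0=\rho^+$. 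By minimality of $\rho^+$ for $\eepstaurhat{\cdot}$, the function $s\mapsto\eepstaurhat{\rho_s}$ has a minimum at $s=0$, so its derivative there vanishes; I would compute this derivative explicitly.

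For the derivative I would use that $\partial_s\rho_s|_{s=0}=-(\rho^+\xi)_x$ with $\rho^+\xi\in H^2(\R)$ supported in $S$. The standard first-variation formulas---with the Dirichlet term treated via the Hilbert structure of $\dot H^1(\R)$ (using $\rho^+\in H^2(\R)$), the confinement term elementary, and the $h$-term using $h'(0)=0$ and $|h'(r)|\le\tfrac{A}{3}r^{3/2}$ so that on $\supp\xi\subset S$, where $\rho^+$ is bounded above and away from $0$, everything is integrable---give
\begin{align*}
    \ddszerop\me_\eps(\rho_s)=-\int_S\left(-\rho^+_{xx}+\frac{\lambda}{2}x^2+\eps\,h'(\rho^+)\right)(\rho^+\xi)_x\,\intd x,
\end{align*}
while the classical formula for the first variation of the squared Wasserstein distance along a pushforward gives
\begin{align*}
    \ddszerop\frac{1}{2\tau}\wass^2(\rho_s,\hat\rho)=\frac{1}{\tau}\int_S\left(x-T(x)\right)\rho^+\xi\,\intd x.
\end{align*}
Setting $u:=-\rho^+_{xx}+\tfrac{\lambda}{2}x^2+\eps\,h'(\rho^+)\in L^1_{loc}(S)$ and $\psi:=\rho^+\xi$, the optimality condition becomes $\int_S u\,\psi_x\,\intd x=\tfrac{1}{\tau}\int_S(x-T)\,\psi\,\intd x$; since $\rho^+\in C^1(\R)$ is positive on $S$, the $\psi$ arising this way form a $C^1$-dense class of test functions, so the identity holds for all $\psi\in C^1_c(S)$. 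Thus $u$ has distributional derivative $u_x=\tfrac{1}{\tau}(T-\mathrm{id})$ on $S$, which is precisely \eqref{eq:step_elg}, holding a.e.\ on $S$.

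For the regularity I would bootstrap from this identity. Monotonicity of $T$ gives $T\in L^\infty_{loc}(S)$, hence $u_x\in L^\infty_{loc}(S)$ and $u\in W^{1,\infty}_{loc}(S)$. Since $h'(\rho^+)\in C^1(S)$ with $\partial_x h'(\rho^+)=h''(\rho^+)\rho^+_x$ bounded---because $|h''|\le A/2$ on $\Rnn$ by \eqref{eq:hderbound_second} and $\rho^+_x\in C_0(\R)$---the identity $\rho^+_{xx}=-u+\tfrac{\lambda}{2}x^2+\eps\,h'(\rho^+)$ yields $\rho^+_{xx}\in W^{1,\infty}_{loc}(S)$, i.e.\ $\rho^+\in W^{3,\infty}_{loc}(S)$, and a.e.\ on $S$
\begin{align*}
    \rho^+_{xxx}=\lambda x-\frac{1}{\tau}\left(T-\mathrm{id}\right)+\eps\,h''(\rho^+)\rho^+_x.
\end{align*}

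Finally, assume $\{\hat\rho>0\}\subseteq[-M,M]$. Since $T_\#\rho^+=\hat\rho$ and $T$ is monotone, $T$ takes values in $[-M,M]$ on $S$; once $S$ is known to be bounded, every term on the right-hand side of the last display is then bounded on $S$, so $\rho^+_{xxx}\in L^\infty(S)$ and $\rho^+\in W^{3,\infty}(S)$. To see that $S$ is bounded, suppose e.g.\ $\sup S=+\infty$: for $x_1<x_2$ in one component of $S$ with $x_1,x_2>2M$, integrating $u_x=\tfrac{1}{\tau}(T-\mathrm{id})$ with $T\le M$ gives $u(x_2)-u(x_1)\le-\tfrac{1}{4\tau}(x_2^2-x_1^2)$, and inserting the definition of $u$ together with $|h'(\rho^+)|\le\tfrac{A}{3}\|\rho^+\|_{L^\infty}^{3/2}$ forces $\rho^+_{xx}(x_2)\ge\rho^+_{xx}(x_1)+(\tfrac{\lambda}{2}+\tfrac{1}{4\tau})(x_2^2-x_1^2)-C$, which for a single unbounded rightmost component contradicts $\rho^+_{xx}\in L^2(\R)$. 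The main obstacle in this proposition is to also rule out $S$ consisting of infinitely many bounded components escaping to $+\infty$: here one should combine the above lower bound with the vanishing $\rho^+=\rho^+_x=0$ at the finite component endpoints and the summability of $\|\rho^+_{xx}\|_{L^2}^2$ over the components to again violate $\rho^+_{xx}\in L^2(\R)$; the other delicate point is the rigorous justification of the two first-variation identities with only $\rho^+\in H^2(\R)$ at hand, in particular the differentiability of $s\mapsto\rho_s$ and the exact form of the Wasserstein contribution.
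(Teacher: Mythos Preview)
Your argument is correct and takes a genuinely different route from the paper's. The paper derives \eqref{eq:step_elg} by \emph{additive} (vertical) perturbations $\rho^s=\rho^++s\eta$ with $\eta\in C^\infty_c(]a_n,b_n[)$ of zero mean; the derivative of the Wasserstein term along such perturbations is not standard, so the paper proves a dedicated one-dimensional lemma in the appendix expressing it via an antiderivative $\varphi$ of $\mathrm{id}-T$. This first yields an \emph{integrated} Euler--Lagrange equation $-\rho^+_{xx}+\tfrac{\lambda}{2}x^2+\eps h'(\rho^+)+\varphi/\tau\equiv C_n$ on each component, and \eqref{eq:step_elg} follows by differentiating. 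You instead use \emph{pushforward} (horizontal) variations $\rho_s=(\Phi_s)_\#\rho^+$, for which the Wasserstein first variation is the classical identity from optimal-transport theory, arriving at the distributional equation $u_x=\tfrac{1}{\tau}(T-\mathrm{id})$ directly. Your approach plugs into standard OT machinery and avoids the appendix lemma; the paper's approach is more elementary and self-contained.

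For the boundedness of $S$ when $\{\hat\rho>0\}$ is bounded, the paper's argument is considerably shorter than yours: rearranging \eqref{eq:step_elg} gives $\tau\rho^+_{xxx}=(\lambda\tau+1)x-T(x)+\eps\tau\,h'(\rho^+)_x$, with the last two terms uniformly bounded on $S$; hence $\rho^+_{xxx}$ grows linearly on $S$ as $|x|\to\infty$, which is incompatible with $\rho^+$ being a probability density (on any component far to the right $\rho^+_{xxx}>0$ forces $\rho^+_{xx}$ increasing, but $\rho^+_x$ vanishes at both endpoints). This one-line growth argument handles both an unbounded component and infinitely many bounded components at once, so the ``main obstacle'' you flag does not arise.
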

\begin{proof}
    Since $\rho^+$ is continuous, $S$ is an open subset of $\R$, and thus it can be written as a countable union of disjoint open intervals:
\begin{align} \label{eq:s_interval_union}
    S = \biguplus_{n = 1}^\infty ]a_n, b_n[,
\end{align}
with $a_n, b_n \in \R \cup \{\pm \infty\}$. We fix any $n \in \N$ and choose an arbitrary $\eta \in C^\infty_c(]a_n, b_n[)$ with $\int \eta = 0$. There now exist points $a, b$ with $a_n < a < b < b_n$ such that $\eta = 0$ everywhere in $\R \setminus [a, b]$. By continuity, $\rho^+$ is bounded away from 0 on the interval $[a,b]$. Hence if $s_0 > 0$ is small enough, we have $\rho^s := \rho^+ + s \eta \geq 0$ in $\R$ for every $s \in [-s_0, s_0]$. In particular, $\rho^s \in \mpt(\R) \cap H^1(\R)$. Since $\rho^+$ is a global minimizer of the functional $\eepstaurhat{\cdot}$, this implies
\begin{align} \label{eq:eepstau_var_deriv}
    \left.\frac{\intd}{\intd s}\right|_{s = 0} \eepstaurhat{\rho^s} = \left.\frac{\intd}{\intd s}\right|_{s = 0} \left[\me_\eps(\rho^s) + \frac{1}{2 \tau} \wass^2(\rho^s, \hat\rho) \right] = 0,
\end{align}
if the derivative exists. We prove that this is indeed the case and derive an explicit expression for it. For the energy term, it holds for every $s \in [-s_0, s_0] \setminus \{0\}$:
\begin{align*}
    &\frac{\me_\eps(\rho^s) - \me_\eps(\rho^+)}{s} = \int_a^b \left[\frac{1}{2}\frac{\left(\rho^+_x + s \eta_x\right)^2 - (\rho^+_x)^2}{s} + \frac{\lambda}{2}x^2\eta + \eps \frac{h(\rho^+ + s\eta) - h(\rho^+)}{s} \right]\,\intd x \\
    &\to \int_a^b \left[ \eta_x\rho^+_x + \eta\,\left(\frac{\lambda}{2}x^2 + \eps h'(\rho^+)\right)  \right]\,\intd x = \int_a^b \eta\, \left(-\rho^+_{xx} + \frac{\lambda}{2}x^2 + \eps h'(\rho^+) \right) \,\intd x.
\end{align*}
The limit $s \to 0$ is justified by the dominated convergence theorem, since $h'$ is locally bounded. For the distance term, we apply Lemma \ref{lem:wass_deriv} proven in the appendix: Since $\hat\rho$ and $\rho^+$ are continuous in $\R$ and $\rho^+$ is positive in $]a_n, b_n[$, the map $s \mapsto \wass^2(\rho^s, \hat\rho)$ is differentiable at $s = 0$ with derivative
\begin{align*}
    \left.\frac{\intd}{\intd s}\right|_{s = 0} \wass^2(\rho^s, \hat\rho) = 2\int_{a_n}^{b_n} \eta\,\varphi\,\intd x,
\end{align*}
where $\varphi:\,]a_n, b_n[ \to \R$ is defined as
\begin{align*}
    \varphi(x) := \int_{x_0}^x \left(y - T(y)\right)\,\intd y,
\end{align*}
with an arbitrary $x_0 \in ]a_n, b_n[$, and $T \in L^\infty_{loc}(]a_n, b_n[)$ denoting the optimal transport map that pushes $\rho^+$ to $\hat\rho$.

We now insert these results into \eqref{eq:eepstau_var_deriv}, using that $\eta$ vanishes outside $]a,b[$, and obtain
\begin{align*}
    \int_{a_n}^{b_n} \eta\,\left(-\rho^+_{xx} + \frac{\lambda}{2}x^2 + \eps h'(\rho^+) + \frac{\varphi}{\tau}\right)\,\intd x = 0.
\end{align*}
Since $\eta \in C^\infty_c(]a_n, b_n[)$ was arbitrary, this yields that for some constant $C_n$, it holds
\begin{align} \label{eq:step_elg_integrated}
    -\rho^+_{xx} + \frac{\lambda}{2}x^2 + \eps h'(\rho^+) + \frac{\varphi}{\tau} \equiv C_n  \quad \text{ a. e. in } ]a_n, b_n[.
\end{align}
 Note that $\varphi$ is differentiable almost everywhere in $]a_n, b_n[$, with derivative $\mathrm{id} - T$. After subtracting $\frac{\varphi}{\tau}$ on both sides of \eqref{eq:step_elg_integrated}, the right-hand side is thus differentiable almost everywhere. Hence, the same holds for the left-hand side as well, and we obtain
\begin{align*}
    \left(-\rho^+_{xx} + \frac{\lambda}{2}x^2 + \eps h'(\rho^+) \right)_x = \frac{T - \mathrm{id}}{\tau} \quad \text{ a. e. in } ]a_n, b_n[.
\end{align*}
Since $n$ was arbitrary, this finishes the proof of \eqref{eq:step_elg}.

The claim $\rho^+ \in W^{3, \infty}_{loc}(S)$, follows directly from \eqref{eq:step_elg_integrated}, since $C_n - \frac{\lambda}{2}x^2 - \eps h'(\rho^+) - \frac{\varphi}{\tau}$ is in $W^{1,\infty}_{loc}(S)$ by construction of $\varphi$ and the fact that $T$ is locally bounded in $S$ by monotonicity.
    
We now multiply \eqref{eq:step_elg} by $\tau$ and rearrange the terms to obtain
\begin{align} \label{eq:rhopxxx_eq}
    \tau \rho^+_{xxx} = \left(\lambda \tau + 1\right) x - T(x) + \eps \tau h'(\rho^+)_x
\end{align}
almost everywhere in $S$. Assuming now that $\{\hat\rho > 0\}$ is bounded,
the last two terms are uniformly bounded in $S$: It holds $\hat\rho(T(x)) > 0$ everywhere in $S$, while Proposition \ref{prop:step_reg} implies that $h'(\rho^+)_x = h''(\rho^+) \rho^+_x$ is uniformly bounded in $\R$. Hence, the expression on the right-hand side of \eqref{eq:rhopxxx_eq} grows linearly as $|x| \to \infty$. Since $\rho^+$ is a probability density, this implies that $S$ is bounded, and thus $\rho^+_{xxx}$ is uniformly bounded in $S$, finishing the proof.
\end{proof}

This result is used in the following lemma, which shows that one step in the minimizing movement scheme satisfies a time-discrete version of the weak formulation \eqref{eq:weak_sol_intro} up to a small error.
\begin{lemma} \label{lem:step_pde_approx}
    Let $\zeta \in C^\infty_c(\R)$ be arbitrary. Then for every $\hat\rho$ and $\rho^+$ as before, it holds
    \begin{equation} \label{eq:step_pde_approx}
        \left| \int_\R \left[\zeta\, \frac{\rho^+ - \hat\rho}{\tau} - \left( \rho^+\zeta_x\right)_x\, \left(-\rho^+_{xx} + \frac{\lambda}{2}x^2 + \eps h'(\rho^+)\right) \right]\,\intd x \right| \leq \frac{1}{2\tau}\|\zeta_{xx}\|_{C^0(\R)}\,\wass^2(\rho^+, \hat\rho).
    \end{equation}
    \end{lemma}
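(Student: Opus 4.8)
The plan is to test the Euler–Lagrange equation \eqref{eq:step_elg} against $\zeta$ and control the resulting error by the quadratic cost of the transport step. Starting from the right-hand side of \eqref{eq:step_pde_approx}, I would first rewrite $\int_\R (\rho^+\zeta_x)_x(-\rho^+_{xx}+\tfrac{\lambda}{2}x^2+\eps h'(\rho^+))\,\intd x$ by integrating by parts once, moving one derivative off the bracket $(\rho^+\zeta_x)_x$ and onto the potential term, so that the integrand becomes $-\rho^+\,\zeta_x\,(-\rho^+_{xx}+\tfrac{\lambda}{2}x^2+\eps h'(\rho^+))_x$. This step is legitimate because, by Proposition \ref{prop:step_elg}, the set $S=\{\rho^+>0\}$ is the disjoint union of intervals $]a_n,b_n[$ on which $\rho^+$ is $W^{3,\infty}_{loc}$, so $\rho^+\zeta_x$ vanishes at each endpoint and the boundary terms drop; outside $S$ the factor $\rho^+$ kills the integrand. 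Then, by the Euler–Lagrange identity \eqref{eq:step_elg}, the derivative of the bracket equals $(T-\mathrm{id})/\tau$ a.e. on $S$, so this term becomes $-\tfrac{1}{\tau}\int_S \rho^+\,\zeta_x\,(T-\mathrm{id})\,\intd x$.

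The left-hand difference to estimate is therefore
\begin{align*}
    \int_\R \zeta\,\frac{\rho^+-\hat\rho}{\tau}\,\intd x + \frac{1}{\tau}\int_S \rho^+\,\zeta_x\,(T-\mathrm{id})\,\intd x.
\end{align*}
The natural next move is a change of variables in the first integral using that $T$ pushes $\rho^+$ forward to $\hat\rho$: $\int_\R \zeta\,\hat\rho\,\intd x = \int_S \zeta(T(x))\,\rho^+(x)\,\intd x$. Hence the quantity inside the absolute value equals $\tfrac{1}{\tau}\int_S \rho^+(x)\,\big[\zeta(x)-\zeta(T(x))+\zeta_x(x)\,(T(x)-x)\big]\,\intd x$. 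This is precisely a first-order Taylor remainder of $\zeta$ at the point $x$ evaluated at $T(x)$: Taylor's theorem with integral (or Lagrange) remainder gives $|\zeta(T(x))-\zeta(x)-\zeta_x(x)(T(x)-x)| \le \tfrac12\|\zeta_{xx}\|_{C^0(\R)}\,(T(x)-x)^2$ pointwise.

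Putting these together, the modulus is bounded by $\tfrac{1}{2\tau}\|\zeta_{xx}\|_{C^0(\R)}\int_S \rho^+(x)\,(T(x)-x)^2\,\intd x$, and the last integral is exactly $\wass^2(\rho^+,\hat\rho)$ since $T$ is the optimal transport map from $\rho^+$ to $\hat\rho$ (in one dimension the monotone map is optimal for the quadratic cost). This yields \eqref{eq:step_pde_approx}. The main technical point to be careful about is the integration by parts over the (possibly countably many, possibly unbounded) intervals comprising $S$: one needs the decay/regularity of $\rho^+$ near $\partial S$ and at infinity, but this is supplied by the continuity of $\rho^+$ together with the $W^{3,\infty}_{loc}(S)$ regularity from Proposition \ref{prop:step_elg} (and, if $\{\hat\rho>0\}$ happens to be bounded, by the stronger global bound there); since $\zeta$ has compact support only finitely many intervals matter and no boundary contributions survive. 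A secondary point is justifying the change of variables $\int\zeta\hat\rho = \int (\zeta\circ T)\rho^+$, which is just the definition of push-forward, valid because $\rho^+$ charges no mass outside $S$.
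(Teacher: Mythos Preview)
Your approach is the paper's approach: push forward via $T$, invoke the Euler--Lagrange equation \eqref{eq:step_elg}, integrate by parts over the intervals of $S$, and bound the second-order Taylor remainder of $\zeta$; the paper just performs these steps in the opposite order.

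The one point you undersell is the vanishing of the boundary terms in the integration by parts. The boundary contribution on each interval is $\rho^+\zeta_x\,(-\rho^+_{xx}+\tfrac{\lambda}{2}x^2+\eps h'(\rho^+))$, and while $\rho^+\to 0$ at a finite endpoint $b_n$, the factor $\rho^+_{xx}$ may be unbounded there; the $W^{3,\infty}_{loc}(S)$ regularity from Proposition \ref{prop:step_elg} is purely interior and does not control this product. What is actually needed is the global $H^2(\R)$ regularity from Proposition \ref{prop:step_reg}: from $\rho^+_{xx}\in L^2(\R)$ one gets $\rho^+(x)\le c\,(b_n-x)^{3/2}$ near $b_n$ and, along a subsequence, $|\rho^+_{xx}(x)|\le c\,(b_n-x)^{-1/2}$, so $\rho^+\rho^+_{xx}\to 0$. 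Your observation that the compact support of $\zeta$ disposes of unbounded intervals is correct and slightly cleaner than the paper's treatment of that case.
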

    \begin{proof}
        Let $T$ denote the optimal transport map pushing $\rho^+$ to $\hat\rho$. In particular, it holds $\hat\rho = T\#\rho^+$. Thus, we obtain
        \begin{align*}
            &\left|\int_\R \left[ \zeta\, \frac{\rho^+-\hat\rho}{\tau} + \rho^+ \zeta_x\, \frac{T(x) - x}{\tau}\right]\,\intd x \right| = \left| \int_\R \left[ \rho^+ \,\frac{\zeta(x) - \zeta(T(x)) + \zeta_x(x) (T(x) - x)}{\tau}\right]\,\intd x\right| \\
            &\leq \frac{1}{2\tau} \|\zeta_{xx}\|_{C^0(\R)} \int_\R \rho^+ \,(T(x) - x)^2\,\intd x = \frac{1}{2\tau}\|\zeta_{xx}\|_{C^0(\R)}\,\wass^2(\rho^+, \hat\rho).
        \end{align*}
        The integrand $\rho^+\zeta_x \frac{T(x)-x}{\tau}$ makes sense, as $T$ is undefined only on the set $\{\rho^+ = 0\}$. Inserting expression \eqref{eq:step_elg} for $\frac{T(x)-x}{\tau}$, it holds
        \begin{align*}
            \int_\R \rho^+ \zeta_x\, \frac{T(x) - x}{\tau}\,\intd x &= \int_S \rho^+ \zeta_x\, \frac{T(x) - x}{\tau}\,\intd x = \int_S \rho^+\zeta_x\,\left(-\rho^+_{xx} + \frac{\lambda}{2} x^2 + \eps h'(\rho^+)\right)_x\,\intd x \\
            &= \sum_{n=1}^\infty \int_{a_n}^{b_n} \rho^+\zeta_x\,\left(-\rho^+_{xx} + \frac{\lambda}{2} x^2 + \eps h'(\rho^+)\right)_x\,\intd x
        \end{align*}
        In order to integrate by parts, we show that the boundary terms and $b_n$ vanish. The proof for $a_n$ is analogous. In the case that $b_n = +\infty$, this follows from the fact that $\rho^+$ has bounded second moment and that $\rho^+ \in H^2(\R)$ implies $\rho^+\rho^+_{xx} \in L^1(\R)$. For finite $b_n$, the only critical step is showing that $\rho^+\rho^+_{xx} \to 0$ at least along a subsequence $x \to b_n$. This follows again using $\rho^+ \in H^2(\R)$, since $\rho^+_{xx} \in L^2(\R)$ implies $\rho^+(x) \leq c\,(b_n - x)^{3/2}$ for some constant $c$ when $x$ approaches $b_n$ from below, as well as $\rho^+_{xx} \leq c\,(b_n - x)^{-1/2}$ along a subsequence $x \to b_n$.
        
        Thus, the boundary terms vanish, and we have
        \begin{align*}
            \int_\R \rho^+ \zeta_x\, \frac{T(x) - x}{\tau}\,\intd x = -\int_\R \left(\rho^+\zeta_x\right)_x\left(-\rho^+_{xx} + \frac{\lambda}{2}x^2 + \eps h'(\rho^+)\right)\,\intd x.
        \end{align*}
        Inserting this into the above estimate yields \eqref{eq:step_pde_approx}.
    \end{proof}

\subsection{Interpolation and convergence to weak solution} \label{subsec:interpol}

Given any initial datum $\rho_0$ with finite energy, we now construct for each $\tau > 0$ a piecewise constant curve $\rho^\tau: [0, \infty[ \to \mpt(\R) \cap H^1(\R)$ in the following typical way:
\begin{align*}
    \rho^\tau(t) := \hat\rho_{n+1} = \rho_n^+ \quad \text{for } n\tau < t \leq (n+1)\tau, \quad \rho^\tau(0) = \rho_0,
\end{align*}
where the sequences $(\hat\rho_n)_{n \geq 0}$ and $(\rho_n^+)_{n \geq 0}$ of densities $\hat\rho_n, \rho_n^+ \in \mpt(\R) \cap H^1(\R)$ are defined recursively as
\begin{align*}
    \hat\rho_0 := \rho_0, \quad \hat\rho_{n+1} := \rho_n^+ \in \argmin_{\rho \in \mpt(\R) \cap H^1(\R)} \eepstau(\rho|\hat\rho_n) \quad \text{for every } n \geq 0.
\end{align*}
We shall prove that along a subsequence $\tau \to 0$, these piecewise constant curves converge in some topology to a Hölder-continuous limit curve $\rho: [0, \infty[ \to \mpt(\R) \cap H^1(\R)$, and that this limit curve $\rho$ satisfies the weak formulation \eqref{eq:weak_sol_intro} of \eqref{eq:thinfilm}. The latter result will rely on the following lemma, which is essentially a time-integrated version of Lemma \ref{lem:step_pde_approx}.
\begin{lemma} \label{lem:discr_pde_approx}
    We fix an arbitrary $\zeta \in C^\infty_c(]0, \infty[ \times \R)$. For any $\tau > 0$, we define its time-discrete approximation $\zeta^\tau:[0, \infty[ \to C^\infty_c(\R)$ as $\zeta^\tau(t, x) := \zeta(n\tau, x)$ for every $t$ with $n\tau \leq t < (n+1)\tau$. Then the following estimate holds:
    \begin{align} \label{eq:discr_pde_approx}
        \left|\int_0^\infty \int_\R\left[\rho^\tau \partial_t \zeta + \left(\rho^\tau \zeta^\tau_x\right)_x \left(-\rho^\tau_{xx} + \frac{\lambda}{2}x^2 + \eps h'(\rho^\tau)\right)\right]\,\intd x\intd t\right| \leq \|\zeta\|_{C^2} \left(\me_\eps(\rho_0) - \inf_\rho\, \me_\eps(\rho)\right)\,\tau
    \end{align}
\end{lemma}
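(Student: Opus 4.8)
The plan is to rewrite the space--time integral on the left of \eqref{eq:discr_pde_approx} as $\tau$ times a sum of the one-step quantities controlled by Lemma~\ref{lem:step_pde_approx}, and then to telescope. Since $\rho^\tau$ and $\zeta^\tau$ are piecewise constant in time, equal on the cell $]n\tau,(n+1)\tau[$ to $\rho_n^+$ and $\zeta(n\tau,\cdot)$ respectively (the endpoints forming a null set), the second term in the integrand splits as $\sum_{n\ge0}\tau\int_\R(\rho_n^+\,\zeta(n\tau,\cdot)_x)_x\,(-\rho^+_{n,xx}+\frac{\lambda}{2}x^2+\eps h'(\rho_n^+))\,\intd x$, and the $n=0$ contribution vanishes because $\zeta(0,\cdot)=0$. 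For the first term I would use $\int_{n\tau}^{(n+1)\tau}\partial_t\zeta\,\intd t=\zeta((n+1)\tau,\cdot)-\zeta(n\tau,\cdot)$ on each cell, so that $\int_0^\infty\int_\R\rho^\tau\partial_t\zeta\,\intd x\intd t=\sum_{n\ge0}\int_\R\rho_n^+\,(\zeta((n+1)\tau,\cdot)-\zeta(n\tau,\cdot))\,\intd x$, and then perform Abel's summation by parts. Re-indexing and using $\hat\rho_n=\rho_{n-1}^+$ together with $\zeta(0,\cdot)=0$ to discard the boundary term gives $\int_0^\infty\int_\R\rho^\tau\partial_t\zeta\,\intd x\intd t=-\sum_{n\ge1}\int_\R(\rho_n^+-\hat\rho_n)\,\zeta(n\tau,\cdot)\,\intd x$. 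All the series here are finite sums, since $\zeta$ has compact support in time, so the interchange of sum with integral and the rearrangement are harmless.

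Combining the two pieces, the left-hand side of \eqref{eq:discr_pde_approx} equals, up to a sign, $\tau\sum_{n\ge1}\int_\R[\zeta(n\tau,\cdot)\frac{\rho_n^+-\hat\rho_n}{\tau}-(\rho_n^+\,\zeta(n\tau,\cdot)_x)_x(-\rho^+_{n,xx}+\frac{\lambda}{2}x^2+\eps h'(\rho_n^+))]\,\intd x$, which is exactly $\tau$ times a sum of the quantities estimated in Lemma~\ref{lem:step_pde_approx}, applied with $\hat\rho=\hat\rho_n$, $\rho^+=\rho_n^+$ and the admissible test function $\zeta(n\tau,\cdot)\in C_c^\infty(\R)$. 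Hence its modulus is at most $\tau\sum_{n\ge1}\frac{1}{2\tau}\|\zeta(n\tau,\cdot)_{xx}\|_{C^0(\R)}\,\wass^2(\rho_n^+,\hat\rho_n)\le\frac12\,\|\zeta\|_{C^2}\sum_{n\ge1}\wass^2(\rho_n^+,\hat\rho_n)$, using $\|\zeta(n\tau,\cdot)_{xx}\|_{C^0(\R)}\le\|\zeta\|_{C^2}$.

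To close the estimate I would invoke the step energy inequality \eqref{eq:step_energy_estimate}, i.e. $\wass^2(\rho_n^+,\hat\rho_n)\le2\tau(\me_\eps(\hat\rho_n)-\me_\eps(\rho_n^+))$, so that the bound becomes $\|\zeta\|_{C^2}\,\tau\sum_{n\ge1}(\me_\eps(\hat\rho_n)-\me_\eps(\rho_n^+))$. Each summand is nonnegative by \eqref{eq:step_energy_estimate}, and since $\hat\rho_{n+1}=\rho_n^+$ the partial sums telescope: for every $N$, $\sum_{n=0}^N(\me_\eps(\hat\rho_n)-\me_\eps(\rho_n^+))=\me_\eps(\rho_0)-\me_\eps(\hat\rho_{N+1})\le\me_\eps(\rho_0)-\inf_\rho\me_\eps(\rho)$, using $\hat\rho_0=\rho_0$ and $\me_\eps(\hat\rho_{N+1})\ge\inf_\rho\me_\eps(\rho)$. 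Therefore $\sum_{n\ge1}(\me_\eps(\hat\rho_n)-\me_\eps(\rho_n^+))\le\me_\eps(\rho_0)-\inf_\rho\me_\eps(\rho)$, and \eqref{eq:discr_pde_approx} follows.

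No genuine analytical difficulty arises here; the statement is a bookkeeping consequence of Lemma~\ref{lem:step_pde_approx} and \eqref{eq:step_energy_estimate}. The only points demanding a little care are the summation by parts in time---getting the index shift right and checking that the boundary term drops because $\zeta$ is supported in $]0,\infty[$---and the observation that every series involved is in fact a finite sum, so that the rearrangements are legitimate. Once those are in place, the telescoping of the energy differences completes the argument.
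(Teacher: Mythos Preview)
Your proposal is correct and follows essentially the same approach as the paper: split the time integral into cells, handle the $\partial_t\zeta$ term by Abel summation using $\zeta(0,\cdot)=0$ and $\hat\rho_n=\rho_{n-1}^+$, use piecewise constancy for the second term, apply Lemma~\ref{lem:step_pde_approx} cellwise, and telescope via \eqref{eq:step_energy_estimate}. The paper's proof is identical in structure and detail.
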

\begin{proof}
    We split the time-integral on the left-hand side into a sum of integrals over intervals of the form $]n\tau, (n+1)\tau[$. By construction of $\rho^\tau$ and Fubini, we obtain
    \begin{align*}
        &\int_0^\infty \int_\R \rho^\tau \partial_t\zeta\,\intd x\intd t = \sum_{n = 0}^\infty \int_{n\tau}^{(n+1)\tau} \int_\R \rho_n^+\partial_t\zeta\,\intd x\intd t = \sum_{n=0}^\infty \int_\R \rho_n^+\, \left(\zeta((n+1)\tau, x) - \zeta(n\tau, x)\right)\,\intd x \\
        &= -\int_\R \rho_0^+ \zeta(0, x)\,\intd x + \sum_{n=1}^\infty \int_\R \zeta(n\tau, x)\, (\rho_{n-1}^+ - \rho_n^+)\,\intd x = \tau \sum_{n=1}^\infty \int_\R \zeta(n\tau, x)\,\frac{\hat\rho_n - \rho_n^+}{\tau}\,\intd x,
    \end{align*}
    where in the last step, we used $\zeta(0, \cdot) = 0$ and $\rho^+_{n-1} = \hat\rho_n$. Similarly, the construction of $\zeta^\tau$ implies
    \begin{align*}
        &\int_0^\infty \int_\R \left(\rho^\tau \zeta^\tau_x\right)_x \left(-\rho^\tau_{xx} + \frac{\lambda}{2}x^2 + \eps h'(\rho^\tau)\right)\,\intd x\intd t \\ &= \tau \sum_{n = 1}^\infty \int_\R \left(\rho_n^+\, \zeta(\tau n, x)_x\right)_x \left(-(\rho_n^+)_{xx} + \frac{\lambda}{2}x^2 + \eps h'(\rho^+_n)\right)\,\intd x.
    \end{align*}
    Applying Lemma \ref{lem:step_pde_approx} to $\hat\rho_n, \rho_n^+$ and $\zeta_n := \zeta(n\tau, \cdot) \in C^\infty_c(\R)$ thus yields
    \begin{align*}
        &\left|\int_0^\infty \int_\R\left[\rho^\tau \partial_t \zeta + \left(\rho^\tau \zeta^\tau_x\right)_x \left(-\rho^\tau_{xx} + \frac{\lambda}{2}x^2 + \eps h'(\rho^\tau)\right)\right]\,\intd x\intd t\right| \\
        &\leq \tau \sum_{n=1}^\infty \left|\int_\R \left[ \zeta_n\, \frac{\hat\rho_n - \rho_n^+}{\tau}  + (\rho_n^+\, (\zeta_n)_x)_x \left(-(\rho_n^+)_{xx} + \frac{\lambda}{2}x^2 + \eps h'(\rho^+_n)\right) \right]\,\intd x\right| \\
        &\leq \tau \sum_{n=1}^\infty \frac{1}{2\tau} \|(\zeta_n)_{xx}\|_{C^0(\R)}\wass^2(\rho_n^+, \hat\rho_n) \leq \|\zeta\|_{C^2} \left(\me_\eps(\rho_0) - \inf_\rho\,\me_\eps(\rho)\right)\,\tau,
    \end{align*}
    where the last estimate follows from \eqref{eq:step_energy_estimate}.
\end{proof}
Our next goal is to prove compactness of the sequence $(\rho^\tau)_{\tau \downarrow 0}$ in a suitable topology, and then show that the limit curve of a converging subsequence solves \eqref{eq:thinfilm} weakly by passing to the limit $\tau \to 0$ in \eqref{eq:discr_pde_approx}. In order to obtain the required compactness, we need the technical estimates given by the next two lemmas. The first one provides a uniform $L^2$-bound for the second spatial derivatives $\rho^\tau_{xx}$, the second one controls the step sizes of $\rho^\tau$ in time.
\begin{lemma} \label{lem:rhotauxx_l2bound}
    For all $0 \leq T < +\infty$, there exists a constant $M_T < +\infty$, depending on $\rho_0$ but not on $\tau$, such that for $\tau > 0$ sufficiently small, it holds
    \begin{align*}
        \int_0^T \|\rho_{xx}^\tau(t)\|_{L^2(\R)}^2\,\intd t \leq M_T.
    \end{align*}
\end{lemma}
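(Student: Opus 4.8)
The plan is to sum the per-step regularity estimate from Proposition \ref{prop:step_reg} over the time steps contained in $[0,T]$ and to exploit a telescoping cancellation of the entropy terms, which is the standard way to obtain such bounds for minimizing movement schemes. Fix $T$ and set $N := \lceil T/\tau\rceil$. By construction of $\rho^\tau$ we have $\rho^\tau(t) = \rho_n^+$ on $]n\tau, (n+1)\tau]$, and only the steps $n = 0, \dots, N-1$ meet $[0,T]$, each contributing an interval of length at most $\tau$, so that
\begin{align*}
    \int_0^T \|\rho_{xx}^\tau(t)\|_{L^2(\R)}^2\,\intd t \leq \sum_{n=0}^{N-1} \tau\, \|(\rho_n^+)_{xx}\|_{L^2(\R)}^2.
\end{align*}
Applying \eqref{eq:step_h2_bound} to each pair $(\hat\rho_n, \rho_n^+)$, multiplying by $\tau$ and summing yields
\begin{align*}
    \sum_{n=0}^{N-1} \tau\, \|(\rho_n^+)_{xx}\|_{L^2(\R)}^2 \leq \sum_{n=0}^{N-1}\bigl(\mh(\hat\rho_n) - \mh(\rho_n^+)\bigr) + N\tau\lambda + \eps\frac{A}{2} \sum_{n=0}^{N-1} \tau\, \|(\rho_n^+)_x\|_{L^2(\R)}^2.
\end{align*}
Since $\hat\rho_{n+1} = \rho_n^+$, the first sum telescopes to $\mh(\hat\rho_0) - \mh(\hat\rho_N) = \mh(\rho_0) - \mh(\hat\rho_N)$.

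Next I would supply the bounds making the right-hand side finite uniformly in $\tau$. Estimate \eqref{eq:step_energy_estimate} gives $\me_\eps(\hat\rho_{n+1}) \leq \me_\eps(\hat\rho_n)$, so $n \mapsto \me_\eps(\hat\rho_n)$ is non-increasing and $\me_\eps(\rho_n^+) = \me_\eps(\hat\rho_{n+1}) \leq \me_\eps(\rho_0) < +\infty$ for all $n$. Proposition \ref{prop:energy_props} then yields a constant $K = K(\rho_0, \eps) < +\infty$ with $\|(\rho_n^+)_x\|_{L^2(\R)}^2 \leq K$ and $\mom_2[\rho_n^+] \leq K$, and likewise $\mom_2[\hat\rho_n] \leq K$, since each $\hat\rho_n$ equals either $\rho_0$ or some $\rho_{n-1}^+$. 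This bounds the last sum by $\eps\frac{A}{2}K\,N\tau$, and $N\tau\lambda$ by $\lambda(T+\tau)$. Finally, the standard lower bound on the entropy in terms of the second moment (comparison with a Gaussian) gives $\mh(\hat\rho_N) \geq -c(1 + \mom_2[\hat\rho_N]) \geq -c(1+K)$ for a universal constant $c$, while $\mh(\rho_0)$ is finite by the first assertion of Proposition \ref{prop:step_reg}. Collecting everything and using $N\tau \leq T + \tau \leq T+1$ for $\tau \leq 1$:
\begin{align*}
    \int_0^T \|\rho_{xx}^\tau(t)\|_{L^2(\R)}^2\,\intd t \leq \mh(\rho_0) + c(1+K) + \lambda(T+1) + \eps\frac{A}{2}K(T+1) =: M_T,
\end{align*}
which is independent of $\tau$, as claimed.

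The argument is essentially bookkeeping; the only point requiring care is that the entropy increments $\mh(\hat\rho_n)-\mh(\rho_n^+)$ need not have a fixed sign, so one must keep the telescoping structure intact and invoke the two-sided control of $\mh$ only at the two endpoints $n=0$ and $n=N$, rather than estimating step by step. That uniform control of the entropy (through the second moment) along the entire scheme is, in turn, what relies on the monotonicity of $\me_\eps$ along the minimizing movement together with Proposition \ref{prop:energy_props}.
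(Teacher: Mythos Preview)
Your proof is correct and follows essentially the same approach as the paper: sum the per-step $H^2$ bound \eqref{eq:step_h2_bound}, telescope the entropy increments, and control the endpoints and the $\|(\rho_n^+)_x\|_{L^2}^2$ terms via the energy monotonicity and Proposition~\ref{prop:energy_props}. Your closing remark on why one must telescope rather than estimate step by step is a nice clarification beyond what the paper writes.
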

\begin{proof}
    We fix $T > 0$ and $\tau > 0$. Let $N \in \N_0$ be such that $(N-1)\tau < T \leq N\tau$. We use the fact that $\rho^\tau$ is piecewise constant in time and \eqref{eq:step_h2_bound} to obtain
    \begin{align*}
        &\int_0^T \|\rho_{xx}^\tau(t)\|_{L^2(\R)}^2\,\intd t \leq \int_0^{N\tau} \|\rho_{xx}^\tau(t)\|_{L^2(\R)}^2\,\intd t = \tau \sum_{n = 0}^{N-1} \int_\R (\rho_n^+)_{xx}^2\,\intd x \\
        &\leq \tau \sum_{n=0}^{N-1}\left[\frac{\mh(\hat\rho_n) - \mh(\rho_n^+)}{\tau} + \lambda + \eps \frac{A}{2}\|(\rho_n^+)_x\|_{L^2(\R)}^2\right] \\
        &\leq \mh(\rho_0) - \mh(\rho_{N-1}^+) + \lambda N \tau + \eps\tau \frac{A}{2}\sum_{n=0}^{N-1}\|(\rho_n^+)_x\|_{L^2(\R)}^2.
    \end{align*}
    Since $\me_\eps(\rho_{N - 1}^+) \leq \me_\eps(\rho_0)$, there exists a $\tau$-uniform bound on $\mom_2[\rho_{N-1}^+]$ by Proposition \ref{prop:energy_props}, implying that $\mh(\rho_{N-1}^+)$ is $\tau$-uniformly bounded from below. Since $N\tau \leq T + \tau$ by construction, and each summand in the last term can be controlled by $\me_\eps(\rho_{N - 1}^+)$ by Proposition \ref{prop:energy_props}, this proves the claim.
\end{proof}
\begin{lemma} \label{lem:discr_holder_estim}
    Let $0 \leq s \leq t$ be arbitrary. Then for all $\tau > 0$, it holds
    \begin{align} \label{eq:discr_holder_estim}
        \wass^2(\rho^\tau(s), \rho^\tau(t)) \leq 2 \left(\me_\eps(\rho_0) - \me_\eps(\bar\rho^\eps)\right)\left(t - s + \tau \right).
    \end{align}
\end{lemma}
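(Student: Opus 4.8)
The plan is to exploit the "total square distance" telescoping that is standard in the analysis of the minimizing movement scheme, using the one-step estimate \eqref{eq:step_energy_estimate} as the basic building block. First I would treat the case where $s$ and $t$ fall in the interior of the time grid: let $m, N \in \N_0$ be the indices with $m\tau < s \leq (m+1)\tau$ and $N\tau < t \leq (N+1)\tau$ (so $\rho^\tau(s) = \rho^+_m$ and $\rho^\tau(t) = \rho^+_N$), and without loss of generality $m \leq N$. By the triangle inequality for $\wass$ followed by Cauchy–Schwarz over the $N - m$ intermediate steps,
\begin{align*}
    \wass^2(\rho^\tau(s), \rho^\tau(t)) = \wass^2(\rho^+_m, \rho^+_N) \leq (N - m) \sum_{n = m+1}^{N} \wass^2(\rho^+_{n-1}, \rho^+_n) = (N-m) \sum_{n=m+1}^N \wass^2(\hat\rho_n, \rho^+_n).
\end{align*}
Now apply \eqref{eq:step_energy_estimate} to each term: $\wass^2(\hat\rho_n, \rho^+_n) \leq 2\tau\,(\me_\eps(\hat\rho_n) - \me_\eps(\rho^+_n)) = 2\tau\,(\me_\eps(\rho^+_{n-1}) - \me_\eps(\rho^+_n))$, and the sum telescopes to $2\tau\,(\me_\eps(\rho^+_m) - \me_\eps(\rho^+_N))$. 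Since the energy is non-increasing along the scheme, $\me_\eps(\rho^+_m) \leq \me_\eps(\rho^+_0) \leq \me_\eps(\rho_0)$, and since $\bar\rho^\eps$ is a global minimizer (Theorem \ref{thm:main_stat}), $\me_\eps(\rho^+_N) \geq \me_\eps(\bar\rho^\eps)$. Hence the sum is bounded by $2\tau\,(\me_\eps(\rho_0) - \me_\eps(\bar\rho^\eps))$.

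It remains to bound the prefactor $N - m$ in terms of $t - s$ and $\tau$. From $m\tau < s$ and $t \leq (N+1)\tau$ we get $(N - m)\tau < t - s + \tau$, i.e. $(N-m)\tau \leq t - s + \tau$. Multiplying the two bounds gives
\begin{align*}
    \wass^2(\rho^\tau(s), \rho^\tau(t)) \leq (N-m)\tau \cdot 2\,(\me_\eps(\rho_0) - \me_\eps(\bar\rho^\eps)) \leq 2\,(\me_\eps(\rho_0) - \me_\eps(\bar\rho^\eps))\,(t - s + \tau),
\end{align*}
which is exactly \eqref{eq:discr_holder_estim}. The degenerate case $m = N$ (both times in the same grid cell) is trivial, as then the left-hand side is $0$. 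Note that the bound is valid for all $\tau > 0$ with no smallness required, and the right-hand side is finite because $\me_\eps$ is bounded below by Proposition \ref{prop:energy_props} and $\rho_0$ has finite energy by assumption.

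I do not anticipate a genuine obstacle here; the only mild subtlety worth stating carefully is the bookkeeping of the grid indices relative to the half-open cells $]n\tau, (n+1)\tau]$ so that the estimate $(N-m)\tau \leq t - s + \tau$ comes out with the right constant and the right-hand side indeed has the additive $\tau$ rather than $2\tau$. One should also double-check that $\me_\eps(\rho_0) - \me_\eps(\bar\rho^\eps) \geq 0$, which is immediate from minimality of $\bar\rho^\eps$, so that the right-hand side is manifestly non-negative.
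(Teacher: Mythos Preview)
Your approach is correct and essentially identical to the paper's proof. One minor correction: to obtain $(N-m)\tau \leq t - s + \tau$ you need the inequalities $N\tau < t$ and $s \leq (m+1)\tau$ from the index definitions, not $m\tau < s$ and $t \leq (N+1)\tau$ as you wrote (the latter pair gives a \emph{lower} bound on $(N-m)\tau$, not an upper bound).
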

\begin{proof}
    Let $N_0, N_1$ be integers such that $N_0 \tau < s \leq (N_0 + 1)\tau$ and $N_1 \tau < t \leq (N_1 + 1)\tau$. The triangle inequality for $\wass$ and \eqref{eq:step_energy_estimate} yield
    \begin{align*}
        &\wass^2(\rho^\tau(s), \rho^\tau(t)) = \wass^2(\hat\rho_{N_0 + 1}, \rho_{N_1}^+) \leq \left(\sum_{n = N_0 + 1}^{N_1} \wass(\hat{\rho}_n, \rho_n^+)\right)^2 \\
        &\leq (N_1 - N_0) \sum_{n = N_0 + 1}^{N_1} \wass^2(\hat{\rho}_n, \rho_n^+) \leq 2\tau\,(N_1 - N_0) \sum_{n = N_0 + 1}^{N_1} \left[\me_\eps(\hat\rho_n) - \me_\eps(\rho_n^+)\right] \\
        &\leq 2\tau\,(N_1 - N_0)\, (\me_\eps(\rho_0) - \me_\eps(\bar\rho^\eps)).
    \end{align*}
    Inserting $N_1 \tau \leq t$ and $N_0 \tau \geq s - \tau$ proves \eqref{eq:discr_holder_estim}.
\end{proof}
With the help of these estimates, we now prove compactness of the sequence of piecewise constant curves $(\rho^\tau)_{\tau \downarrow 0}$ in the topology specified in the following Lemma.
\begin{lemma} \label{lem:tau_cvgce}
    There exists a curve $\rho: [0, +\infty[ \to \mpt(\R) \cap H^1(\R)$ such that $\rho(t) \in H^2(\R)$ at almost every $t \geq 0$, and a subsequence $\tau \to 0$ along which the following hold:
    \begin{align*}
    \begin{array}{cl}
        \rho^\tau(t) \weakto \rho(t) & \text{weakly as probability measures for every } t > 0, \\
        \rho^\tau \to \rho & \text{strongly in } L^2([0, T] \times \R), \\
        h'(\rho^\tau) \to h'(\rho) & \text{strongly in } L^2([0, T] \times \R), \\
        \rho^\tau_x \to \rho_x & \text{strongly in } L^2([0, T] \times \R), \\
        \rho^\tau_{xx} \weakto \rho_{xx} & \text{weakly in } L^2([0, T] \times \R),
    \end{array}
    \end{align*}
    for all $0 < T < +\infty$. Moreover, the limit curve $\rho$ is Hölder-$\frac{1}{2}$-continuous with respect to the Wasserstein metric. Specifically, it holds for every $s, t \geq 0$
    \begin{align*}
        \wass^2(\rho(s), \rho(t)) \leq 2 \,\left(\me_\eps(\rho_0) - \me_\eps(\bar\rho^\eps)\right)\, |s - t|.
    \end{align*}
\end{lemma}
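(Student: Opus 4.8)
The plan is to combine the a priori estimates established in Lemmas \ref{lem:rhotauxx_l2bound} and \ref{lem:discr_holder_estim} with standard compactness arguments (Aubin--Lions type) to extract a convergent subsequence, and then to improve the weak convergence to the strong convergences claimed. First, Lemma \ref{lem:discr_holder_estim} shows that the curves $\rho^\tau$ are uniformly (in $\tau$) approximately Hölder-$\tfrac12$ in the Wasserstein metric on every compact time interval; together with the fact (from Proposition \ref{prop:energy_props} and $\me_\eps(\rho^\tau(t)) \le \me_\eps(\rho_0)$) that each $\rho^\tau(t)$ lies in a fixed bounded subset of $H^1(\R) \cap \mpt(\R)$, a refined Arzelà--Ascoli / Ascoli-type argument in the (narrow) topology of probability measures — as used e.g. in \cite{AGS} — yields a subsequence $\tau \to 0$ and a limit curve $\rho:[0,\infty[\to \mpt(\R)$ with $\rho^\tau(t) \weakto \rho(t)$ narrowly for every $t$. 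Since for each fixed $t$ the family $\{\rho^\tau(t)\}_\tau$ is bounded in $H^1(\R)$, the narrow limit $\rho(t)$ is in fact the weak-$H^1$ limit, so $\rho^\tau(t) \weakto \rho(t)$ weakly in $\mpt(\R)\cap H^1(\R)$ for every $t > 0$; passing to the limit in \eqref{eq:discr_holder_estim} along this subsequence gives the stated Hölder-$\tfrac12$ bound for $\rho$.

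Next I would upgrade to the $L^2$-in-space-time statements. We have a uniform bound on $\rho^\tau$ in $L^\infty([0,T];H^1(\R))$ (from the energy bound) and on $\rho^\tau_{xx}$ in $L^2([0,T]\times\R)$ (Lemma \ref{lem:rhotauxx_l2bound}), hence a uniform bound in $L^2([0,T];H^2(\R))$; and the time-regularity estimate \eqref{eq:discr_holder_estim} controls $\wass(\rho^\tau(s),\rho^\tau(t))$, which bounds a suitable weak time-derivative (or at least gives equicontinuity in a negative-order space). An Aubin--Lions--Simon type compactness lemma — with $H^2(\R)\hookrightarrow\hookrightarrow H^1_{loc}(\R)$ compactly, and the Wasserstein time-equicontinuity playing the role of the time-derivative bound — then yields, along a further subsequence, strong convergence $\rho^\tau \to \rho$ in $L^2([0,T];H^1_{loc}(\R))$; in particular $\rho^\tau \to \rho$ and $\rho^\tau_x \to \rho_x$ strongly in $L^2_{loc}([0,T]\times\R)$. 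To remove the "loc" in the spatial variable, one argues exactly as in the proof of Lemma \ref{lem:eeps_lsc}: the uniform second-moment bound gives tightness, the uniform $L^\infty$ bound $\|\rho^\tau(t)\|_{L^\infty}\le C\|\rho^\tau_x(t)\|_{L^2}^{2/3}$ controls the tails of $\int \rho^\tau{}^2$, and similarly $\int (\rho^\tau_x)^2$ outside a large ball can be shown to be small uniformly (using the $L^2([0,T];H^2)$ bound to control $\rho^\tau_x$ in the tails); this promotes the strong convergence to all of $L^2([0,T]\times\R)$. The bound $\rho^\tau_{xx}\weakto\rho_{xx}$ weakly in $L^2([0,T]\times\R)$ is then immediate from Lemma \ref{lem:rhotauxx_l2bound} and uniqueness of distributional limits, and $\rho(t)\in H^2(\R)$ for a.e.\ $t$ follows by lower semicontinuity of the $L^2([0,T];H^2)$-norm. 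Finally, the convergence $h'(\rho^\tau)\to h'(\rho)$ strongly in $L^2([0,T]\times\R)$ follows from the strong $L^2$-convergence of $\rho^\tau$: using the first estimate in \eqref{eq:hestim_firstorder}, $|h'(r)-h'(s)| = |\int_s^r h''| \le \tfrac{A}{2}\max\{\|\rho^\tau\|_{L^\infty},\|\rho\|_{L^\infty}\}^{1/2}\,|r-s|$ pointwise (after passing to a further subsequence converging a.e., and using the uniform $L^\infty$ bound), hence $\|h'(\rho^\tau)-h'(\rho)\|_{L^2}\lesssim \|\rho^\tau-\rho\|_{L^2}\to 0$, together with dominated convergence since $|h'(\rho^\tau)|\le \tfrac{A}{3}(\rho^\tau)^{3/2}$ and $(\rho^\tau)^{3/2}$ is controlled in $L^2([0,T]\times\R)$ by the uniform $L^\infty$ and $L^1$ bounds.

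The main obstacle I anticipate is the Aubin--Lions step: the natural time-regularity we have is equicontinuity in the \emph{Wasserstein} metric (equivalently, a uniform Hölder bound in $\wass$), not a bound on $\partial_t\rho^\tau$ in a Banach space, so one must invoke a version of the compactness lemma adapted to curves in metric spaces / measures — for instance the Aubin--Lions--Simon variant with a weak time-continuity hypothesis, or the refined compactness criterion of Rossi--Savaré. One must check that the pair ($L^2$-in-time bound in $H^2(\R)$, $\wass$-equicontinuity in time) meets the hypotheses of such a theorem with the compact embedding $H^2(\R)\hookrightarrow\hookrightarrow H^1_{loc}$; this is standard in the gradient-flow literature but is the one place where care is needed. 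The remaining steps — promoting local to global $L^2$-convergence via tightness, and the continuity of $h'$ — are routine given the estimates already in hand and closely parallel arguments already carried out in Lemma \ref{lem:eeps_lsc}.
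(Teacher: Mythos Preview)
Your overall strategy is sound and would lead to a correct proof, but it differs from the paper's in one notable respect: the paper does \emph{not} invoke any Aubin--Lions/Rossi--Savar\'e type compactness theorem. Instead it proceeds in a more elementary, step-by-step fashion. For the strong $L^2([0,T]\times\R)$-convergence of $\rho^\tau$ the paper argues pointwise in time: at each fixed $t$ the weak $H^1$-convergence plus Rellich gives strong $L^2_{loc}(\R)$-convergence, tightness upgrades this to strong $L^2(\R)$, and then dominated convergence in $t$ (using the uniform bound on $\|\rho^\tau(t)\|_{L^2}$) yields the space-time statement. For the derivative, rather than relying on $H^2\hookrightarrow\hookrightarrow H^1_{loc}$ and then having to promote local to global convergence of $\rho^\tau_x$, the paper uses a one-line interpolation trick:
\[
\|\rho^\tau_x - \rho_x\|_{L^2([0,T]\times\R)}^2 = -\int_0^T\!\!\int_\R (\rho^\tau_{xx}-\rho_{xx})(\rho^\tau-\rho)\,\intd x\,\intd t \le \|\rho^\tau_{xx}-\rho_{xx}\|_{L^2}\,\|\rho^\tau-\rho\|_{L^2},
\]
where the first factor is bounded (by the weak $L^2$-convergence of $\rho^\tau_{xx}$) and the second tends to zero. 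This gives the \emph{global} strong $L^2$-convergence of $\rho^\tau_x$ for free, completely bypassing the tail issue.

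That tail issue is precisely the one soft spot in your sketch: the claim that ``$\int(\rho^\tau_x)^2$ outside a large ball can be shown to be small uniformly (using the $L^2([0,T];H^2)$ bound)'' is not obvious --- an $H^2$-bound does not by itself localize $\rho_x$ without some additional argument (e.g.\ integration by parts against a cutoff, which then produces boundary-type terms that need control). It can be made to work, but it is more delicate than you suggest, and the paper's interpolation identity is a cleaner way to close this step. Everything else in your proposal (the Arzel\`a--Ascoli argument in $(\mpt,\wass)$, the $h'$-convergence via the Lipschitz bound, the weak $L^2$-limit for $\rho_{xx}$) matches the paper's reasoning.
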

\begin{proof}
    For every $t \geq 0$, the expressions $\mom_2[\rho^\tau(t)]$ and $\|\rho^\tau(t)\|_{H^1(\R)}$ are $\tau$-uniformly bounded from above by Proposition \ref{prop:energy_props} and the decay of $\me_\eps(\rho^\tau)$. Since uniform boundedness of the second moments implies tightness, Prokhorov's theorem yields compactness of the family $(\rho^\tau(t))_{\tau \downarrow 0}$ with respect to weak convergence of probability measures for every $t \geq 0$. By a diagonal argument, this implies that for a subsequence $(\tau_n)_n \to 0$, the $\rho^{\tau_n}(t)$ converge weakly to some probability densities $\rho(t)$ at every $t \in [0, +\infty[\, \cap\, \Q$. We use Lemma \ref{lem:discr_holder_estim} to prove that $\rho$ can be extended to a Hölder-continuous curve on the whole interval $[0, +\infty[$, and that $\rho^{\tau_n}(t) \weakto \rho(t)$ at every $t \geq 0$: Lemma \ref{lem:discr_holder_estim} and the lower semi-continuity of the Wasserstein distance with respect to weak convergence yield for arbitrary $s, t \in [0, +\infty[\,\cap\,\Q$
    \begin{align*}
        \wass^2(\rho(s), \rho(t)) \leq \liminf_{n \to \infty} \wass^2(\rho^{\tau_n}(s), \rho^{\tau_n}(t)) \leq 2\,\left(\me_\eps(\rho_0) - \me_\eps(\bar\rho^\eps)\right)\,|t - s|.
    \end{align*}
    Hence, $\rho$ is Hölder-continuous on $[0, +\infty[\,\cap\,\Q$. Since $\mpt(\R)$ with the Wasserstein metric is a complete metric space, there exists a unique Hölder-continuous extension $\rho: [0, +\infty[ \to \mpt(\R)$. The fact that for this extension, it holds $\rho^{\tau_n}(t) \weakto \rho(t)$ for every real $t \geq 0$ follows from weak compactness of the sequence $(\rho^{\tau_n}(t))_n$ in $\mpt(\R)$: Let $(\tau_{n_m})_m$ be a subsequence along which $\rho^{\tau_{n_m}}(t)$ converges weakly to some $\mu \in \mpt(\R)$. Then it holds for all $s \in \Q$
    \begin{align*}
        &\wass^2(\mu, \rho(t)) \leq 2 \left(\wass^2(\mu, \rho(s)) + \wass^2(\rho(s), \rho(t))\right)\\
        &\leq 2 \left(\wass^2(\rho(s), \rho(t)) +  \liminf_{m \to \infty} \wass^2(\rho^{\tau_{n_m}}(t), \rho^{\tau_{n_m}}(s)) \right) \leq 8\,\left(\me_\eps(\rho_0) - \me_\eps(\bar\rho^\eps)\right)\,|t - s|.
    \end{align*}
    Since $s \in \Q$ was arbitrary, this implies $\mu = \rho(t)$, proving that $\rho^{\tau_n}(t) \weakto \rho(t)$ weakly at every $t \geq 0$.

    In order to prove $\rho^{\tau_n} \to \rho$ strongly in $L^2([0, T] \times \R)$, we use the $\tau$-uniform bound on $\|\rho^\tau(t)\|_{H^1(\R)}$ at every $t \geq 0$.
    Banach-Alaoglu directly yields compactness of the sequence $(\rho^{\tau_n}(t))_n$ at every $t \geq 0$ in the weak $H^1(\R)$-topology. Since $\rho^{\tau_n}(t) \weakto \rho(t)$ weakly as probability measures, this implies $\rho(t) \in H^1(\R)$ and $\rho^{\tau_n}(t) \weakto \rho(t)$ weakly in $H^1(\R)$, and thus by Rellich's theorem, $\rho^{\tau_n}(t) \to \rho(t)$ strongly in $L^2_{loc}(\R)$ at every $t \geq 0$. In order to derive from this the strong convergence in $L^2(\R)$, observe that for every $R < +\infty$, it holds
    \begin{align*}
        &\limsup_{n \to \infty} \int_{\R \setminus [-R, R]} |\rho^{\tau_n}(t) - \rho(t)|^2\,\intd x \\
        &\leq 2\, \left(\int_{\R \setminus [-R, R]} \rho(t)^2\,\intd x + \limsup_{n \to \infty} \left(\|\rho^{\tau_n}(t)\|_{L^\infty(\R)} \int_{\R \setminus [-R, R]} \rho^{\tau_n}(t)\,\intd x \right) \right).
    \end{align*}
    The last expression can be made arbitrary small for $R \to +\infty$ by tightness of $\rho^{\tau_n}(t)$, uniform boundedness of $\|\rho^{\tau_n}(t)\|_{L^\infty(\R)}$ and the fact that $\rho(t) \in L^2(\R)$. Thus we have proven $\rho^{\tau_n}(t) \to \rho(t)$ strongly in $L^2(\R)$ at every $t \geq 0$. The strong convergence $\rho^{\tau_n} \to \rho$ in $L^2([0, T] \times \R)$ now follows from the dominated convergence theorem, since $\|\rho^{\tau_n}(t)\|_{L^2(\R)}$ is bounded uniformly with respect to $t \in [0, T]$.

    The strong $L^2([0,T] \times \R)$-convergence of $h'(\rho^{\tau_n})$ to $h'(\rho)$ follows from Hypothesis \ref{hyp:hderiv} and the $L^2$-convergence $\rho^{\tau_n} \to \rho$ by observing that
    \begin{align*}
        &\int_0^T \int_\R (h'(\rho^{\tau_n}) - h'(\rho))^2\,\intd x\,\intd t \leq \int_0^T \int_\R \left(\sup_{r \in [\rho, \rho^{\tau_n}]}|h''(r)|\right)^2 (\rho^{\tau_n} - \rho)^2\,\intd x\,\intd t \\
        &\leq \frac{L^2}{4} \max \left\{\|\rho\|_{L^\infty([0, T] \times \R)}, \|\rho^{\tau_n}\|_{L^\infty([0, T] \times \R)}\right\}\, \|\rho^{\tau_n} - \rho\|_{L^2([0, T] \times \R)},
    \end{align*}
    again using uniform boundedness of $\|\rho^{\tau_n}\|_{L^\infty}$.

    We next prove that $\rho_{xx} \in L^2([0, T] \times \R)$ and that $\rho^{\tau_n}_{xx} \weakto \rho_{xx}$ weakly in $L^2$. Lemma \ref{lem:rhotauxx_l2bound} gives a $\tau$-uniform bound on $\|\rho^\tau_{xx}\|_{L^2([0, T] \times \R)}$, and thus boundedness of $(\rho^{\tau_n})_n$ in the Banach space $L^2([0, T], H^2(\R))$. Hence $\rho^{\tau_n}$ converges weakly to some $u \in L^2([0, T], H^2(\R))$ up to subsequence. The identification $u = \rho$ follows directly from the fact that $\rho^{\tau_n} \to \rho$ in $L^2$. Thus $\rho \in L^2([0, T], H^2(\R))$ and $\rho^{\tau_n} \weakto \rho$ weakly in $L^2([0, T], H^2(\R))$. In particular, this implies $\rho_{xx} \in L^2$ at almost every $t \in [0, T]$, and $\rho^{\tau_n}_{xx} \weakto \rho_{xx}$ weakly in $L^2$.

    The strong convergence $\rho^{\tau_n}_x \to \rho_x$ follows from the strong $L^2$-convergence $\rho^{\tau_n} \to \rho$ and the weak $L^2$-convergence $\rho^{\tau_n}_{xx} \weakto \rho_{xx}$. Indeed, it holds for every $n$:
    \begin{align*}
        &\int_0^T \int_\R (\rho^{\tau_n}_x - \rho_x)^2\,\intd x\intd t = \int_0^T \int_\R (\rho^{\tau_n} - \rho)_x\, (\rho^{\tau_n} - \rho)_x\,\intd x\intd t \\
        &= -\int_0^T \int_\R (\rho^{\tau_n} - \rho)_{xx}\,(\rho^{\tau_n} - \rho)\,\intd x\intd t \leq \|\rho^{\tau_n}_{xx} - \rho_{xx}\|_{L^2([0,T]\times\R)} \,\|\rho^{\tau_n} - \rho\|_{L^2([0,T]\times\R)}.
    \end{align*}
    The first norm is uniformly bounded for all $n$, the second one converges to $0$. This yields $\rho^{\tau_n}_x \to \rho_x$ strongly in $L^2$, finishing the proof.
\end{proof}
As a direct consequence of the convergences proven in Lemma \ref{lem:tau_cvgce}, we obtain the result that $\rho$ is a weak solution to \eqref{eq:thinfilm}, finishing the proof of Theorem \ref{thm:main_ex}.
\begin{corollary}
    The curve $\rho$ constructed in Lemma \ref{lem:tau_cvgce} satisfies \eqref{eq:weak_sol_intro}.
\end{corollary}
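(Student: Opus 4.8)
The plan is to pass to the limit $\tau\downarrow 0$ in the time-discrete weak formulation \eqref{eq:discr_pde_approx}, along the subsequence provided by Lemma \ref{lem:tau_cvgce}. Fix an arbitrary $\zeta\in C^\infty_c(]0,+\infty[\times\R)$ and choose a compact interval $[0,T]$ and a compact set $K\subset\R$ such that, for all sufficiently small $\tau$, both $\zeta$ and its time-discrete approximation $\zeta^\tau$, together with all their spatial derivatives, are supported in $[0,T]\times K$; on $K$ the otherwise unbounded factor $x^2$ is bounded, so the confinement term causes no difficulty. First I would observe that the right-hand side of \eqref{eq:discr_pde_approx} equals $\|\zeta\|_{C^2}\,(\me_\eps(\rho_0)-\inf_\rho\me_\eps(\rho))\,\tau$, which tends to $0$: indeed $\me_\eps(\rho_0)<+\infty$ since $\rho_0\in\mpt(\R)\cap H^1(\R)$, and $\inf_\rho\me_\eps(\rho)>-\infty$ by Proposition \ref{prop:energy_props}. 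Hence it suffices to show that the left-hand integral in \eqref{eq:discr_pde_approx} converges to $\int_0^\infty\int_\R[\rho\,\partial_t\zeta+(\rho\zeta_x)_x(-\rho_{xx}+\frac{\lambda}{2}x^2+\eps h'(\rho))]\,\intd x\,\intd t$ along $\tau=\tau_n\to 0$.

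Next I would record the auxiliary convergences. Since $\zeta$ is smooth with compact support, $\zeta$ and all its derivatives are uniformly continuous, so $\zeta^\tau\to\zeta$, $\zeta^\tau_x\to\zeta_x$ and $\zeta^\tau_{xx}\to\zeta_{xx}$ uniformly on $[0,T]\times\R$, and these functions are uniformly bounded. From Lemma \ref{lem:tau_cvgce} one has (along $\tau_n$) strong $L^2([0,T]\times\R)$-convergence of $\rho^\tau$, $\rho^\tau_x$ and $h'(\rho^\tau)$ to their limits, and weak $L^2([0,T]\times\R)$-convergence $\rho^\tau_{xx}\weakto\rho_{xx}$. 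I would then expand $(\rho^\tau\zeta^\tau_x)_x=\rho^\tau_x\zeta^\tau_x+\rho^\tau\zeta^\tau_{xx}$ and write the left-hand side of \eqref{eq:discr_pde_approx} as a finite sum of integrals of the form $\int_0^T\int_\R a^\tau\,b^\tau\,c^\tau\,\intd x\,\intd t$, where $c^\tau$ is one of $\partial_t\zeta,\ \zeta^\tau_x,\ \zeta^\tau_{xx},\ x^2\zeta^\tau_x,\ x^2\zeta^\tau_{xx}$ (uniformly convergent and bounded on $K$), while $\{a^\tau,b^\tau\}$ consists of factors drawn from $\{\rho^\tau,\rho^\tau_x,h'(\rho^\tau)\}$ together with at most one occurrence of $\rho^\tau_{xx}$.

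Passing to the limit in each such term is then routine: a product of two factors converging strongly in $L^2$ converges in $L^1$, and a product of a weakly $L^2$-convergent factor with a strongly $L^2$-convergent one, tested against a uniformly convergent bounded function, converges to the expected limit by the elementary weak-strong lemma — write $\int a^\tau b^\tau c^\tau-\int abc=\int a^\tau(b^\tau c^\tau-bc)+\int(a^\tau-a)bc$, bound the first term by $\|a^\tau\|_{L^2}\,\|b^\tau c^\tau-bc\|_{L^2}$, and use weak convergence against $bc\in L^2$ for the second. Recombining $\rho_x\zeta_x+\rho\zeta_{xx}=(\rho\zeta_x)_x$, which is legitimate since $\rho(t)\in H^2(\R)$ for a.e.\ $t$, yields exactly \eqref{eq:weak_sol_intro} for the fixed $\zeta$; since $\zeta$ was arbitrary this finishes the proof, and with it that of Theorem \ref{thm:main_ex}. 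I expect the only genuinely delicate step to be the term coming from $(\rho^\tau\zeta^\tau_x)_x\,(-\rho^\tau_{xx})$, where the merely weak convergence of $\rho^\tau_{xx}$ must be paired with the strong convergence of $\rho^\tau$ and $\rho^\tau_x$: this is precisely where the weak-strong lemma is needed, and it is the main (if fairly standard) obstacle.
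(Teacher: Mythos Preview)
Your proposal is correct and follows essentially the same approach as the paper: pass to the limit $\tau\to 0$ in \eqref{eq:discr_pde_approx}, using the uniform convergence $\zeta^\tau\to\zeta$ together with the strong and weak $L^2$-convergences supplied by Lemma \ref{lem:tau_cvgce}. The paper's own proof is in fact much terser than yours---it merely asserts that the limit passage is justified by those convergences---so your expansion of the integrand and explicit weak-strong pairing argument is a faithful (and more detailed) rendering of the same idea.
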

\begin{proof}
    The claim follows from Lemmas \ref{lem:discr_pde_approx} and \ref{lem:tau_cvgce} by passing to the limit $\tau \to 0$ in \eqref{eq:discr_pde_approx}. Note that since $\zeta \in C^\infty_c$, we have $\zeta^\tau \to \zeta$ uniformly in $[0, +\infty[ \times \R$. The convergence of the integral on the left-hand side follows from the convergences proven in Lemma \ref{lem:tau_cvgce}.
\end{proof}

\section{Global minimizer of the energy}\label{sec:ex_min}
\subsection{Existence}
We now show that the energy functional $\me_\eps$ has a global minimizer $\bar\rho^\eps$ by the direct method. We then continue by analyzing some properties of this minimizer, proving Theorem \ref{thm:main_stat}. These results will be used in section \ref{sec:exp_cvgce} to prove our main result, the exponential convergence of the solutions constructed in section \ref{sec:sol_ex} to $\bar\rho^\eps$.

\begin{proposition}[Existence] \label{prop:min_ex_tf}
    For every $\eps > 0$, the functional $\me_\eps$ possesses a global minimizer $\bar\rho^\eps \in \mpt(\R)$, which is radially decreasing about $0$.
\end{proposition}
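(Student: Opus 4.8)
The plan is to produce the minimizer by the direct method of the Calculus of Variations --- which is essentially immediate given the bounds and lower semicontinuity already established --- and then to upgrade it to a radially decreasing minimizer by Schwarz symmetrization.

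First I would observe that $m_\eps := \inf_{\rho \in \mpt(\R)}\me_\eps(\rho)$ is finite: it is bounded below by Proposition \ref{prop:energy_props}, and bounded above because any Gaussian density lies in $\mpt(\R)\cap H^1(\R)$ and has finite energy by \eqref{eq:energy_basic_estim}. Take a minimizing sequence $(\rho_n)_n \subset \mpt(\R)\cap H^1(\R)$ with $\me_\eps(\rho_n) \to m_\eps$. By Proposition \ref{prop:energy_props} the sequence is bounded in $H^1(\R)$ with uniformly bounded second moments, so by Banach--Alaoglu and Prokhorov's theorem (tightness coming from the uniform second-moment bound) a subsequence converges weakly in $\mpt(\R)\cap H^1(\R)$ to some $\bar\rho^\eps \in \mpt(\R)\cap H^1(\R)$. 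Lemma \ref{lem:eeps_lsc} then gives $\me_\eps(\bar\rho^\eps) \le \liminf_n\me_\eps(\rho_n) = m_\eps$, so $\bar\rho^\eps$ is a global minimizer.

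Next I would let $\rho^*$ denote the symmetric decreasing rearrangement of $\bar\rho^\eps$ about $0$ and check that $\me_\eps(\rho^*) \le \me_\eps(\bar\rho^\eps)$. Equimeasurability keeps $\rho^*$ a probability density. For the $h$-term, the layer-cake identity $\int_\R h(\rho)\,\intd x = \int_0^\infty h'(t)\,|\{\rho > t\}|\,\intd t$ (valid by Fubini thanks to the bound \eqref{eq:hestim_firstorder}), combined with $|\{\rho^* > t\}| = |\{\bar\rho^\eps > t\}|$ for all $t > 0$, shows $\int_\R h(\rho^*)\,\intd x = \int_\R h(\bar\rho^\eps)\,\intd x$. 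The one-dimensional Pólya--Szegő inequality gives $\int_\R (\rho^*_x)^2\,\intd x \le \int_\R (\bar\rho^\eps_x)^2\,\intd x$, and in particular $\rho^* \in H^1(\R)$; and since $x \mapsto x^2$ is radially increasing, the standard rearrangement inequality for potentials yields $\mom_2[\rho^*] \le \mom_2[\bar\rho^\eps]$. Summing the three contributions, $\me_\eps(\rho^*) \le \me_\eps(\bar\rho^\eps) = m_\eps$, so $\rho^*$ is a global minimizer, radially decreasing about $0$ by construction; replacing $\bar\rho^\eps$ by $\rho^*$ finishes the proof.

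The compactness and lower-semicontinuity inputs are already fully in place (Proposition \ref{prop:energy_props} and Lemma \ref{lem:eeps_lsc}), so the only part needing genuine care --- and the main point of the proposition beyond mere existence --- is the symmetrization step: verifying that Pólya--Szegő and the potential-energy rearrangement inequality apply to the $H^1(\R)$ density at hand, and that the nonlinear term $\int_\R h(\rho)\,\intd x$ is left exactly invariant by the rearrangement. These are all classical (see, e.g., Lieb--Loss), but they carry the content of the ``radially decreasing'' claim.
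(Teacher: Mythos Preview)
Your proof is correct and follows essentially the same route as the paper: the direct method (via Proposition~\ref{prop:energy_props}, Prokhorov/Alaoglu compactness, and Lemma~\ref{lem:eeps_lsc}) for existence, then symmetric decreasing rearrangement with P\'olya--Szeg\H{o}, the second-moment inequality, and invariance of the $h$-term. The only cosmetic difference is that the paper uses the strict-equality case in $\mom_2[\rho^*] \le \mom_2[\bar\rho^\eps]$ to conclude that \emph{any} minimizer is already radially decreasing, whereas you simply replace the minimizer by its rearrangement; both arguments establish the proposition as stated.
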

\begin{proof}
    Existence of a global minimizer follows from the direct method by the same arguments as in Proposition \ref{prop:step_min_ex}. In order to show that $\bar\rho^\eps$ is radially decreasing about $0$, let $\rho^*$ denote the radially decreasing rearrangement of $\bar\rho^\eps$. It holds $\mom_2[\rho^*] \leq \mom_2[\bar\rho^\eps]$ with equality if and only if $\bar\rho^\eps$ is radially decreasing. By the P\'{o}lya-Szeg\H{o} inequality, it holds $\|\rho^*_x\|_{L^2(\R)} \leq \|\bar\rho^\eps_x\|_{L^2(\R)}$, and the $h$-term is invariant under rearrangement. Thus we have
    $\me_\eps(\rho^*) \leq \me_\eps(\bar\rho^\eps)$
    with equality if and only if $\bar\rho^\eps$ is radially decreasing. By minimality of $\bar\rho^\eps$, this proves the claim.
\end{proof}
Note that we do not claim uniqueness of the minimizer $\bar\rho^\eps$ at this stage. In all of the following, we consider $\bar\rho^\eps$ to be some fixed choice of global minimizer for every $\eps > 0$. Since $\bar\rho^\eps$ is radially decreasing about $0$, there exists some $0 < x_*^\eps \leq +\infty$ such that $\bar\rho^\eps > 0$ in $]-x_*^\eps, x_*^\eps[$ and $\bar\rho^\eps = 0$ everywhere else.
\begin{proposition}[First-order optimality conditions] \label{prop:min_elg_tf}
     For all $\eps > 0$, we have $x_*^\eps < +\infty$, thus $\bar\rho^\eps$ is compactly supported. It holds $\bar\rho^\eps \in C^1(\R) \cap C^5(]-x_*^\eps, x_*^\eps[)$. In particular, $\bar\rho^\eps$ vanishes up to first order at $\pm x_*^\eps$, i.e.,
    \begin{equation*}
        \bar\rho^\eps(-x_*^\eps) = \bar\rho^\eps(x_*^\eps) = \bar\rho^\eps_x(-x_*^\eps) = \bar\rho^\eps_x(x_*^\eps) = 0.
    \end{equation*}
    Moreover, there is a constant $C_\eps$ such that
    \begin{equation} \label{eq:elg_tf}
        -\bar\rho^\eps_{xx} +\frac{\lambda}{2}x^2 + \eps h'(\bar\rho^\eps) = C_\eps \quad \text{ in } ]-x_*^\eps, x_*^\eps[.
    \end{equation}
\end{proposition}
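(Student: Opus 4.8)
The plan is to obtain \eqref{eq:elg_tf} from a first-variation argument on the support of $\bar\rho^\eps$, to bootstrap the resulting ODE for interior regularity, to use that ODE to exclude unbounded support, and finally to read off the boundary behaviour from an explicit competitor.

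\emph{Euler--Lagrange equation and interior regularity.} By Proposition \ref{prop:min_ex_tf} and the definition of $x_*^\eps$, the set $S := \{\bar\rho^\eps > 0\}$ is the connected open interval $]-x_*^\eps, x_*^\eps[$, with $x_*^\eps \in\, ]0, +\infty]$ at this stage. Fix a compact $K \subset S$ and $\eta \in C^\infty_c(K)$ with $\int_\R \eta\,\intd x = 0$. For $|s|$ small, $\rho^s := \bar\rho^\eps + s\eta$ lies in $\mpt(\R) \cap H^1(\R)$: non-negativity holds since $\bar\rho^\eps$ is bounded away from $0$ on $K$, unit mass is preserved because $\int\eta = 0$, and the second moment stays finite. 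Minimality of $\bar\rho^\eps$ forces $s \mapsto \me_\eps(\rho^s)$ to have a minimum at $s = 0$; differentiating under the integral (legitimate as $h \in C^1$ and the perturbation is supported in $K$) and integrating by parts yields
\[
    \Big\langle -\bar\rho^\eps_{xx} + \tfrac{\lambda}{2}x^2 + \eps h'(\bar\rho^\eps),\ \eta \Big\rangle = 0
\]
in $\mathcal{D}'(S)$, for every mean-zero $\eta \in C^\infty_c(S)$. Since $S$ is connected, a standard argument (the fundamental lemma of the calculus of variations) shows the distribution $-\bar\rho^\eps_{xx} + \tfrac{\lambda}{2}x^2 + \eps h'(\bar\rho^\eps)$ equals a constant $C_\eps$, which is \eqref{eq:elg_tf}. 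As the right-hand side of $\bar\rho^\eps_{xx} = \tfrac{\lambda}{2}x^2 + \eps h'(\bar\rho^\eps) - C_\eps$ is continuous on $S$, we get $\bar\rho^\eps \in C^2(S)$; bootstrapping via $h \in C^4(\Rp)$ (so $h' \in C^3$) and $\bar\rho^\eps > 0$ on $S$ improves this to $\bar\rho^\eps \in C^5(S)$, with \eqref{eq:elg_tf} holding pointwise.

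\emph{Compact support.} Suppose $x_*^\eps = +\infty$, so $S = \R$ and \eqref{eq:elg_tf} holds everywhere. As $\bar\rho^\eps \in H^1(\R)$ is non-increasing on $[0, +\infty[$, it tends to $0$ at $\pm\infty$; hence $h'(\bar\rho^\eps) \to h'(0) = 0$ and $\bar\rho^\eps_{xx}(x) = \tfrac{\lambda}{2}x^2 + \eps h'(\bar\rho^\eps) - C_\eps \to +\infty$ as $|x| \to \infty$. Then $\bar\rho^\eps_{xx} \geq 1$ for $x \geq R$ with $R$ large, and expanding about $R$ gives $\bar\rho^\eps(x) \geq \bar\rho^\eps(R) + \bar\rho^\eps_x(R)(x - R) + \tfrac12(x - R)^2 \to +\infty$, contradicting $\bar\rho^\eps(x) \to 0$. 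Thus $x_*^\eps < +\infty$. Moreover, since $h'(\bar\rho^\eps(x)) \to 0$ as $x \to x_*^\eps$ from the left, \eqref{eq:elg_tf} shows that $\bar\rho^\eps_{xx}$ extends continuously up to $x_*^\eps$, hence so does $\bar\rho^\eps_x$; write $\gamma$ for the resulting left limit of $\bar\rho^\eps_x$ at $x_*^\eps$. Since $\bar\rho^\eps \geq 0$ is non-increasing on $]0, x_*^\eps[$ and vanishes at $x_*^\eps$, we have $\gamma \leq 0$.

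\emph{Boundary behaviour, and the main obstacle.} It remains to show $\gamma = 0$ (the point $-x_*^\eps$ is symmetric), which is the crux; I would argue by contradiction. If $\gamma < 0$, the graph of $\bar\rho^\eps$ has a genuine corner at $x_*^\eps$ (slope $\gamma$ from the left, $0$ from the right), and rounding it off lowers the energy. For small $\delta > 0$ one builds a competitor $\tilde\rho_\delta \in \mpt(\R) \cap H^1(\R)$ that equals $\bar\rho^\eps$ on $]-\infty, x_*^\eps - \delta]$, on $[x_*^\eps - \delta, b_\delta]$ equals the quadratic matching $\bar\rho^\eps$ in value and slope at $x_*^\eps - \delta$ and vanishing to second order at its right endpoint $b_\delta = x_*^\eps + \delta + o(\delta)$, vanishes on $[b_\delta, +\infty)$, and is corrected by an $O(\delta^2)$ multiplicative adjustment of $\bar\rho^\eps$ on a fixed interval inside $S$ so as to restore unit mass. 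A direct computation then shows that the Dirichlet term $\tfrac12\int_\R (\bar\rho^\eps_x)^2\,\intd x$ decreases by an amount comparable to $\gamma^2\delta$, whereas the confinement term, the $h$-term (using $|h(r)| \leq \tfrac{2A}{15}r^{5/2}$ near $0$ from \eqref{eq:hestim_firstorder}), and the mass correction change $\me_\eps$ only by $O(\delta^2)$; hence $\me_\eps(\tilde\rho_\delta) < \me_\eps(\bar\rho^\eps)$ for small $\delta$, contradicting minimality. Therefore $\gamma = 0$. Collecting everything: $\bar\rho^\eps$ and $\bar\rho^\eps_x$ vanish identically on $\R \setminus\, ]-x_*^\eps, x_*^\eps[$, their one-sided interior limits at $\pm x_*^\eps$ are $0$, and $\bar\rho^\eps \in C^5$ inside, so $\bar\rho^\eps \in C^1(\R)$ with $\bar\rho^\eps(\pm x_*^\eps) = \bar\rho^\eps_x(\pm x_*^\eps) = 0$. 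The main obstacle is exactly this last step: executing the competitor construction rigorously while respecting the non-negativity and unit-mass constraints and correctly tracking the powers of $\delta$.
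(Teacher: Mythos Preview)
Your argument is correct, and the derivation of \eqref{eq:elg_tf}, the bootstrapping to $C^5$, and the exclusion of unbounded support match the paper's reasoning almost exactly.

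The genuine divergence is in how you obtain $C^1$-regularity at $\pm x_*^\eps$. The paper does \emph{not} build a competitor. Instead it observes that the minimizer $\bar\rho^\eps$ is a fixed point of the minimizing-movement step (take $\hat\rho = \rho^+ = \bar\rho^\eps$), so Proposition~\ref{prop:step_reg} --- the flow-interchange estimate against the heat flow, already proved for every step --- applies directly and yields $\bar\rho^\eps \in H^2(\R) \hookrightarrow C^1(\R)$. The vanishing of $\bar\rho^\eps_x(\pm x_*^\eps)$ is then immediate from non-negativity. Your corner-smoothing competitor is a standard and valid alternative (and your $\delta$-scaling is right: the Dirichlet gain is of order $\gamma^2\delta$, the other terms are $O(\delta^2)$), but it is the hard part of your proof, whereas in the paper the work was already done in Section~\ref{sec:sol_ex}. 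The trade-off is that your route is self-contained and avoids the flow-interchange machinery, while the paper's route is a one-line citation of an earlier result.
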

\begin{proof}
    We apply Propositions \ref{prop:step_reg} and \ref{prop:step_elg} to the case $\rho^+ = \hat\rho = \bar\rho^\eps$. The Euler-Lagrange equation \eqref{eq:elg_tf} follows directly from Proposition \ref{prop:step_elg}, since we have $T = \mathrm{id}$ and the set $]-x_*^\eps, x_*^\eps[$ where $\bar\rho^\eps$ is positive is connected.

    The claim that $x_*^\eps$ is finite follows from \eqref{eq:elg_tf} by contradiction: Since $h'(\bar\rho^\eps) - C_\eps \in L^\infty(\R)$, the expression $\frac{\lambda}{2} x^2 + \eps h'(\bar\rho^\eps) - C_\eps$ tends to $+\infty$ as $|x| \to +\infty$. This however can not be the case for $\bar\rho^\eps_{xx}$, since it is a probability density.
    
    The claimed $C^1$-regularity holds by Proposition \ref{prop:step_reg}, as $\bar\rho^\eps \in H^2(\R) \subset C^1(\R)$. By non-negativity of $\bar\rho^\eps$, this implies that $\bar\rho^\eps$ vanishes up to first order at the boundary points $\pm x_*^\eps$ of the interval where it is positive. Inside the interval $]-x_*^\eps, x_*^\eps[$, we use \eqref{eq:elg_tf} and the fact that $h' \in C^3(\Rp)$ by Hypothesis \ref{hyp:hderiv} to conclude that $\bar\rho^\eps$ is indeed in $C^5(]-x_*^\eps, x_*^\eps[)$.
\end{proof}
By combining these results, we show that the minimizer $\bar\rho^\eps$ can be expressed as a Smyth-Hill profile, perturbed by an expression of order $\eps$. This is consistent with the form \eqref{eq:min_unperturbed} in the unperturbed case $\eps = 0$ seen in the introduction.
\begin{lemma}\label{lem:min_explicit}
For every $\eps > 0$ and every $x \in [-x_*^\eps, x_*^\eps]$, it holds
\begin{equation} \label{eq:min_explicit}
\bar\rho^\eps(x) = \frac{\lambda}{24}\left(x^2-(x_*^\eps)^2\right)^2 + \eps \eta^\eps(x)
\end{equation}
where $\eta^\eps: [-x_*^\eps, x_*^\eps] \to \R$ satisfies
\begin{equation*}
\eta^\eps(-x_*^\eps) = \eta^\eps(x_*^\eps) = \eta^\eps_x(-x_*^\eps) = \eta^\eps_x(x_*^\eps) = 0, \quad \eta^\eps_{xx} = h'(\bar\rho^\eps) - \frac{1}{2x_*^\eps}\int_{-x_*^\eps}^{x_*^\eps} h'(\bar\rho^\eps)\,\intd x.
\end{equation*}
\end{lemma}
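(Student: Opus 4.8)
The plan is to \emph{define} $\eta^\eps$ through \eqref{eq:min_explicit} and then verify its asserted properties directly from the Euler--Lagrange equation \eqref{eq:elg_tf}. Write $p^\eps(x) := \frac{\lambda}{24}\bigl(x^2-(x_*^\eps)^2\bigr)^2$ for the Smyth--Hill polynomial built from the support radius $x_*^\eps$ of $\bar\rho^\eps$, and set $\eta^\eps := \eps^{-1}(\bar\rho^\eps - p^\eps)$ on $[-x_*^\eps, x_*^\eps]$. Then \eqref{eq:min_explicit} holds tautologically, and by Proposition \ref{prop:min_elg_tf} one has $\eta^\eps \in C^1([-x_*^\eps, x_*^\eps]) \cap C^5(]-x_*^\eps, x_*^\eps[)$, so all the expressions below are well defined; it remains to check the boundary conditions and the formula for $\eta^\eps_{xx}$.

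First I would record the elementary identities for $p^\eps$: differentiation gives $p^\eps_x(x) = \frac{\lambda}{6}\bigl(x^3-(x_*^\eps)^2 x\bigr)$, hence $p^\eps_{xx}(x) = \frac{\lambda}{2}x^2 - \frac{\lambda}{6}(x_*^\eps)^2$, and in particular $p^\eps(\pm x_*^\eps) = p^\eps_x(\pm x_*^\eps) = 0$. Since Proposition \ref{prop:min_elg_tf} also gives $\bar\rho^\eps(\pm x_*^\eps) = \bar\rho^\eps_x(\pm x_*^\eps) = 0$, the four boundary conditions $\eta^\eps(\pm x_*^\eps) = \eta^\eps_x(\pm x_*^\eps) = 0$ follow at once.

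Next I would insert $\bar\rho^\eps = p^\eps + \eps\eta^\eps$ into \eqref{eq:elg_tf}: the contribution $\frac{\lambda}{2}x^2$ coming from $-p^\eps_{xx}$ cancels the explicit confinement term, and dividing by $\eps$ leaves
\begin{align*}
    \eta^\eps_{xx} = h'(\bar\rho^\eps) - c_\eps \quad \text{in } \,]-x_*^\eps, x_*^\eps[, \qquad c_\eps := \tfrac{1}{\eps}\bigl(C_\eps - \tfrac{\lambda}{6}(x_*^\eps)^2\bigr),
\end{align*}
with $c_\eps$ a constant. To identify $c_\eps$, integrate this identity over $]-x_*^\eps, x_*^\eps[$: by the fundamental theorem of calculus the left-hand side equals $\eta^\eps_x(x_*^\eps) - \eta^\eps_x(-x_*^\eps) = 0$ from the boundary conditions above, so $2x_*^\eps\, c_\eps = \int_{-x_*^\eps}^{x_*^\eps} h'(\bar\rho^\eps)\,\intd x$, which is exactly the claimed formula for $\eta^\eps_{xx}$.

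I do not anticipate any genuine obstacle here; the only point worth stressing is that the coefficient $\frac{\lambda}{24}$ in the Smyth--Hill polynomial is precisely the one making $-p^\eps_{xx}$ cancel the term $\frac{\lambda}{2}x^2$ while $p^\eps$ still vanishes to first order at $\pm x_*^\eps$, so that $\eta^\eps$ solves a second-order ODE with source $h'(\bar\rho^\eps)$ and homogeneous Neumann data, and a single integration pins down the integration constant.
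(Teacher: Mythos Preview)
Your proof is correct and uses the same ingredients as the paper --- the Euler--Lagrange equation \eqref{eq:elg_tf}, the boundary data from Proposition \ref{prop:min_elg_tf}, and the explicit second derivative of the Smyth--Hill polynomial --- with only an organizational difference: the paper first computes $C_\eps$, then \emph{defines} $\eta^\eps$ through the second-order ODE with initial data at $-x_*^\eps$, verifies the remaining boundary conditions via symmetry, and finally checks that $p^\eps + \eps\eta^\eps$ agrees with $\bar\rho^\eps$ by comparing second derivatives, whereas you define $\eta^\eps$ as the difference and read off the ODE directly. Your route is slightly more economical since it avoids the final comparison step.
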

\begin{proof}
    We start by deriving an expression for the constant $C_\eps$ in the Euler-Lagrange equation \eqref{eq:elg_tf}: Using the fact that $\bar\rho^\eps_x(-x_*^\eps) = \bar\rho^\eps_x(x_*^\eps) = 0$ and integrating \eqref{eq:elg_tf} yields
    \begin{align*}
        0 &= \bar\rho^\eps_x(x_*^\eps) - \bar\rho^\eps_x(-x_*^\eps) = \int_{-x_*^\eps}^{x_*^\eps} \bar\rho^\eps_{xx} \,\intd x = \int_{-x_*^\eps}^{x_*^\eps} \left[\frac{\lambda}{2}x^2 + \eps h'(\bar\rho^\eps) - C_\eps\right]\,\intd x \\
        &= \frac{\lambda}{3}(x_*^\eps)^3 - 2 C_\eps x_*^\eps  + \eps \int_{-x_*^\eps}^{x_*^\eps} h'(\bar\rho^\eps)\,\intd x
    \end{align*}
    By solving this equation for $C_\eps$, we obtain
    \begin{equation} \label{eq:c_eps_explicit}
        C_\eps = \frac{\lambda}{6}(x_*^\eps)^2 + \frac{\eps}{2x_*^\eps} \int_{-x_*^\eps}^{x_*^\eps} h'(\bar\rho^\eps)\,\intd x.
    \end{equation}
    We now define $\eta^\eps: [-x_*^\eps, x_*^\eps] \to \R$ to be the solution to the initial value problem
    \begin{align*}
        \eta^\eps(-x_*^\eps) = \eta^\eps_x(-x_*^\eps) = 0, \quad \eta^\eps_{xx} = h'(\bar\rho^\eps) - \frac{1}{2x_*^\eps}\int_{-x_*^\eps}^{x_*^\eps} h'(\bar\rho^\eps)\,\intd x,
    \end{align*}
    We claim that $\eta^\eps$ satisfies $\eta^\eps(x_*^\eps) = \eta^\eps_x(x_*^\eps) = 0$. Indeed, integrating the right-hand side once yields
    \begin{align*}
        \eta^\eps_x(x) = \int_{-x_*^\eps}^{x} h'(\bar\rho^\eps)\,\intd x - \frac{x + x_*^\eps}{2x_*^\eps}\int_{-x_*^\eps}^{x_*^\eps} h'(\bar\rho^\eps)\,\intd x
    \end{align*}
    at every $x \in [-x_*^\eps, x_*^\eps]$. Inserting $x = x_*^\eps$ yields $\eta^\eps_x(x_*^\eps) = 0$. To show $\eta^\eps(x_*^\eps) = 0$, we use the fact that $\bar\rho^\eps$, and thus also $h'(\bar\rho^\eps)$, is symmetric about 0. This implies
    \begin{align*}
        \eta^\eps_x(0) = \int_{-x_*^\eps}^{0} h'(\bar\rho^\eps)\,\intd x - \frac{1}{2}\int_{-x_*^\eps}^{x_*^\eps} h'(\bar\rho^\eps)\,\intd x = 0.
    \end{align*}
    Since $\eta^\eps_{xx}$ is an even function, this implies that $\eta^\eps_x$ is an odd function, and hence that $\eta^\eps$ is even. Thus, it also holds $\eta^\eps(x_*^\eps) = \eta^\eps(-x_*^\eps) = 0$, proving the claimed properties of $\eta^\eps$.

    Now, consider the expression on the right-hand side of \eqref{eq:min_explicit}, denoted as
    \begin{align*}
        r(x) := \frac{\lambda}{24}\left(x^2-(x_*^\eps)^2\right)^2 + \eps \eta^\eps(x).
    \end{align*}
    Our claim is that $\bar\rho^\eps = r$ in $[-x_*^\eps, x_*^\eps]$. We prove the claim by comparison of the second derivatives: Differentiating $r(x)$ twice yields
    \begin{align*}
        r_x(x) &= \frac{\lambda}{6} x \left(x^2 - (x_*^\eps)^2\right) + \eps \eta^\eps_x(x) \\
        r_{xx}(x) &= \frac{\lambda}{2} x^2 - \frac{\lambda}{6} (x_*^\eps)^2 +\eps \eta^\eps_{xx}(x).
    \end{align*}
    Inserting the expression for $\eta^\eps_{xx}$ and using \eqref{eq:c_eps_explicit} and \eqref{eq:elg_tf}, we have
    \begin{align*}
         r_{xx}(x) = \frac{\lambda}{2} x^2 - \frac{\lambda}{6} (x_*^\eps)^2 +\eps \left(h'(\bar\rho^\eps) - \frac{1}{2x_*^\eps}\int_{-x_*^\eps}^{x_*^\eps} h'(\bar\rho^\eps)\,\intd x\right) = \frac{\lambda}{2} x^2 + \eps h'(\bar\rho^\eps) - C_\eps = \bar\rho^\eps_{xx}(x)
    \end{align*}
    for all $x \in [-x_*^\eps, x_*^\eps]$. Since it holds $r(x_*^\eps) = \bar\rho^\eps(x_*^\eps) = 0$ and $r_x(x_*^\eps) = \bar\rho^\eps_x(x_*^\eps) = 0$, this proves the claim.
\end{proof}
\begin{remark}
    As a useful corollary, we obtain the fact that 
    \begin{align} \label{eq:x_eps_bound}
        0 < \liminf_{\eps \to 0} x_*^\eps \leq \limsup_{\eps \to 0} x_*^\eps < +\infty.
    \end{align}
    To see this, observe that for small $\eps$, the $L^\infty$-norm of $\bar\rho^\eps$ stays uniformly bounded, hence by the formula given in Lemma \ref{lem:min_explicit} and the local boundedness of $h'$, there is an $\eps$-independent constant $C$ such that $|\eta_{xx}^\eps| \leq C$ for all small enough $\eps$. Since $\eta^\eps$ vanishes up to first order at $\pm x_*^\eps$, this implies an estimate of the form
    \begin{align*}
        |\eta^\eps(x)| \leq C \,((x_*^\eps)^2 - x^2)^2
    \end{align*}
    for all $x \in [-x_*^\eps, x_*^\eps]$. Together with \eqref{eq:min_explicit} and the fact that $\bar\rho^\eps$ has mass one, this yields \eqref{eq:x_eps_bound}.
\end{remark}
\subsection{Higher derivatives of the square-root}
As explained in the introduction, our construction of a Lyapunov-functional for equation \eqref{eq:thinfilm} relies on the existence of some uniformly convex potential $W_\eps: \R \to \R$ with $W_\eps = -\sqrt{\bar\rho^\eps}$ inside the set where $\bar\rho^\eps$ is positive. This idea motivates the following result, which proves that if $\eps$ is sufficiently small, the profile $-\sqrt{\bar\rho^\eps}$ is uniformly convex on its support, with its third derivatives being of order $\eps$.
\begin{lemma} \label{prop:min_xx_bounds}
    There is a constant $K$ such that for every sufficiently small $\eps > 0$, it holds
    \begin{equation} \label{eq:min_xx_bound}
        \tilde\lambda_\eps := \sqrt{\frac{\lambda}{6}} - K\eps \leq -(\sqrt{\bar\rho^\eps})_{xx} \leq \sqrt{\frac{\lambda}{6}} + K\eps
    \end{equation}
    everywhere in $]-x_*^\eps, x_*^\eps[$. Additionally, there is a constant $M$ such that 
    \begin{align} \label{eq:min_xxx_bound}
    |(\sqrt{\bar\rho^\eps})_{xxx}| \leq M \eps
    \end{align}
    in $]-x_*^\eps, x_*^\eps[$.
\end{lemma}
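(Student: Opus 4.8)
\emph{Plan of proof.} Write $\mu:=\sqrt{\lambda/6}$ and, on $[-x_*^\eps,x_*^\eps]$, set $u:=\sqrt{\bar\rho^\eps}$ and $u_0:=\sqrt{\lambda/24}\,\bigl((x_*^\eps)^2-x^2\bigr)$, so that $(u_0)_{xx}\equiv-\mu$, $(u_0)_{xxx}\equiv 0$, and $u_0^2$ is exactly the Smyth--Hill term in the expansion \eqref{eq:min_explicit} of $\bar\rho^\eps$. Hence $u^2-u_0^2=\eps\,\eta^\eps$, and since $u+u_0>0$ in $\,]-x_*^\eps,x_*^\eps[\,$ we may write $u=u_0+\eps\,v$ with $v:=\eta^\eps/(u+u_0)$. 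Then $(\sqrt{\bar\rho^\eps})_{xx}=-\mu+\eps\,v_{xx}$ and $(\sqrt{\bar\rho^\eps})_{xxx}=\eps\,v_{xxx}$, so both bounds reduce to showing that $v$ is bounded in $C^2([-x_*^\eps,x_*^\eps])$ and that $v_{xxx}$ is bounded in $L^\infty(]-x_*^\eps,x_*^\eps[)$, uniformly over all sufficiently small $\eps>0$; one then sets $K:=\sup_\eps\|v_{xx}\|_{L^\infty}$, $M:=\sup_\eps\|v_{xxx}\|_{L^\infty}$ and takes $\eps$ small enough that $K\eps<\mu$. Throughout we use: the uniform bounds \eqref{eq:x_eps_bound} and $\sup_\eps\|\bar\rho^\eps\|_{L^\infty(\R)}<+\infty$ (for small $\eps$); the Euler--Lagrange equation \eqref{eq:elg_tf} together with the formula \eqref{eq:c_eps_explicit} for $C_\eps$; the identity $\eta^\eps_{xx}=h'(\bar\rho^\eps)-b_\eps$ with $b_\eps:=\tfrac{1}{2x_*^\eps}\int_{-x_*^\eps}^{x_*^\eps}h'(\bar\rho^\eps)\,\intd x$ from Lemma \ref{lem:min_explicit}; and the consequences $|h''(r)|\le\tfrac{A}{2}\sqrt r$, $|h'''(r)|\le C r^{-1/2}$, $|h^{(4)}(r)|\le C r^{-3/2}$ of \eqref{eq:hderbound_second}--\eqref{eq:hderbound_fourth} on any fixed bounded range of $r$, with $\eps$-independent constants.

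\emph{Estimate away from the boundary.} Fix a small $\delta>0$; by \eqref{eq:x_eps_bound}, $x_*^\eps\ge 2\delta$ for $\eps$ small. On $I_\delta:=\{|x|\le x_*^\eps-\delta\}$, \eqref{eq:min_explicit} and the uniform bound on $\|\eta^\eps\|_{L^\infty}$ give $\bar\rho^\eps\ge c(\delta)>0$ uniformly in small $\eps$. Bootstrapping \eqref{eq:elg_tf}, using $h\in C^4(\Rp)$, then shows that $\bar\rho^\eps$, hence $\eta^\eps$ and $u=\sqrt{\bar\rho^\eps}$, are bounded in $C^4(I_\delta)$ uniformly; since $u+u_0\ge u_0\ge c'(\delta)>0$ on $I_\delta$, the quotient $v=\eta^\eps/(u+u_0)$ is bounded in $C^4(I_\delta)$ uniformly. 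It thus remains to control $v$ on a fixed one-sided neighbourhood of $x_*^\eps$, the neighbourhood of $-x_*^\eps$ being handled by symmetry.

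\emph{Estimate near the boundary (the main step).} From \eqref{eq:elg_tf}, $\bar\rho^\eps_{xx}$ extends continuously to $[-x_*^\eps,x_*^\eps]$, and by \eqref{eq:c_eps_explicit} and \eqref{eq:x_eps_bound}, $\bar\rho^\eps_{xx}(x_*^\eps)=\tfrac{\lambda}{2}(x_*^\eps)^2-C_\eps=\tfrac{\lambda}{3}(x_*^\eps)^2+O(\eps)$ is bounded away from $0$ for small $\eps$. Since $\bar\rho^\eps(x_*^\eps)=\bar\rho^\eps_x(x_*^\eps)=0$ by Proposition \ref{prop:min_elg_tf}, Taylor's formula with integral remainder gives, on a fixed interval $[x_*^\eps-\delta_0,x_*^\eps]$, a factorization $\bar\rho^\eps=(x_*^\eps-x)^2\,\Theta^\eps$ with $\Theta^\eps$ bounded above and below by positive constants uniformly in small $\eps$; in particular $|\bar\rho^\eps_x|\le C(x_*^\eps-x)$ there, while $|\bar\rho^\eps_{xx}|$ and $|\bar\rho^\eps_{xxx}|$ are bounded by \eqref{eq:elg_tf}. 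Differentiating $h'(\bar\rho^\eps)$ up to three times and estimating each term — for instance
\begin{equation*}
\bigl|h^{(4)}(\bar\rho^\eps)(\bar\rho^\eps_x)^3\bigr|\le C\,(\bar\rho^\eps)^{-3/2}(x_*^\eps-x)^3\le C',\qquad \bigl|h''(\bar\rho^\eps)\,\bar\rho^\eps_{xx}\bigr|\le C\,(\bar\rho^\eps)^{1/2}\longrightarrow 0,
\end{equation*}
and similarly for the remaining terms — shows that $h'(\bar\rho^\eps)$ is $C^2$ up to $x_*^\eps$ and $C^3$ in the interior, with all these derivatives bounded near $x_*^\eps$ uniformly in small $\eps$. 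Integrating $\eta^\eps_{xx}=h'(\bar\rho^\eps)-b_\eps$ twice, $\eta^\eps$ is $C^4$ up to $x_*^\eps$ and $C^5$ in the interior, with uniformly bounded fifth derivative near $x_*^\eps$; since $\eta^\eps$ vanishes to second order at $x_*^\eps$, a further Taylor factorization gives $\eta^\eps=(x_*^\eps-x)^2\,\zeta^\eps$ with $\zeta^\eps$ bounded in $C^2$ up to $x_*^\eps$ and $\zeta^\eps_{xxx}$ bounded in the interior, uniformly in small $\eps$. Writing also $u_0=\sqrt{\tfrac{\lambda}{24}}\,(x_*^\eps-x)(x_*^\eps+x)$, $u=(x_*^\eps-x)\sqrt{\Theta^\eps}$ and $\Theta^\eps=\tfrac{\lambda}{24}(x_*^\eps+x)^2+\eps\,\zeta^\eps$, the factor $(x_*^\eps-x)$ cancels:
\begin{equation*}
v=\frac{\eta^\eps}{u+u_0}=\frac{(x_*^\eps-x)\,\zeta^\eps}{\sqrt{\Theta^\eps}+\sqrt{\tfrac{\lambda}{24}}\,(x_*^\eps+x)}.
\end{equation*}
The numerator is $C^2$ up to $x_*^\eps$ and $C^3$ in the interior, and the denominator has the same regularity and is bounded below by a positive constant, uniformly in small $\eps$ near $x_*^\eps$. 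Hence $v$ is bounded in $C^2$ up to $x_*^\eps$ and $v_{xxx}$ is bounded in the interior, uniformly; combined with the estimate away from the boundary, this proves the lemma.

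\emph{Main obstacle.} The crux is the near-boundary regularity of $h'(\bar\rho^\eps)$ — and hence of $\eta^\eps$ and $\sqrt{\bar\rho^\eps}$ — with constants independent of (small) $\eps$: one must balance the quadratic vanishing of $\bar\rho^\eps$ at $\pm x_*^\eps$ against the $r^{-3/2}$-type growth of $h^{(4)}$ allowed by Hypothesis \ref{hyp:hderiv}, and carefully track the $\eps$-dependence of every constant through the Taylor factorizations.
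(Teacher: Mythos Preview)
Your proof is correct and follows essentially the same strategy as the paper: both hinge on showing that the derivatives of $h'(\bar\rho^\eps)$ up to third order stay uniformly bounded near $\pm x_*^\eps$ (balancing the $r^{1/2}$, $r^{-1/2}$, $r^{-3/2}$ bounds on $h'',h''',h^{(4)}$ from Hypothesis~\ref{hyp:hderiv} against the quadratic vanishing of $\bar\rho^\eps$), and then converting this via a Taylor factorization into bounds on $\sqrt{\bar\rho^\eps}$. The paper packages the computation slightly differently---writing $\sqrt{\bar\rho^\eps}=\bigl((x_*^\eps)^2-x^2\bigr)\sqrt{u^\eps}$ with $u^\eps:=\bar\rho^\eps/\bigl((x_*^\eps)^2-x^2\bigr)^2=\tfrac{\lambda}{24}+\eps\,\eta^\eps/\bigl((x_*^\eps)^2-x^2\bigr)^2$ and bounding the first three derivatives of $\sqrt{u^\eps}$ globally on $]-x_*^\eps,x_*^\eps[$, which avoids your interior/boundary split---but the essential analysis is the same.
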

\begin{proof}
    We introduce $u^\eps:]-x_*^\eps, x_*^\eps[ \to \R_{>0}$, defined as
    \begin{align*}
        u^\eps(x) := \frac{\bar\rho^\eps(x)}{\left(x^2 - (x_*^\eps)^2\right)^2} = \frac{\lambda}{24} + \eps \frac{\eta^\eps(x)}{\left(x^2 - (x_*^\eps)^2\right)^2},
    \end{align*}
    where the equality follows from Lemma \ref{lem:min_explicit}.
    For the higher derivatives of $\sqrt{\bar\rho^\eps}$, which exist in $]-x_*^\eps, x_*^\eps[$ by Proposition \ref{prop:min_elg_tf}, we obtain with $\sqrt{\bar\rho^\eps} = \left((x_*^\eps)^2 - x^2\right)\,\sqrt{u^\eps}$:
    \begin{align*} &\left(\sqrt{\bar\rho^\eps}\right)_{xx} = -2 \sqrt{u^\eps} - 4x (\sqrt{u^\eps})_x + \left((x_*^\eps)^2 - x^2\right) (\sqrt{u^\eps})_{xx}, \\   &\left(\sqrt{\bar\rho^\eps}\right)_{xxx} = -6 (\sqrt{u^\eps})_x - 6x (\sqrt{u^\eps})_{xx} + \left((x_*^\eps)^2 - x^2\right)(\sqrt{u^\eps})_{xxx}.
    \end{align*}
    The claim of the Lemma thus follows once we prove that the estimates
    \begin{align} \label{eq:sqrtueps_estim}
        &\left|\sqrt{u^\eps} -  \frac{1}{2}\sqrt{\frac{\lambda}{6}} \right| \leq C\eps \\ \label{eq:sqrtueps_x_estim}
        &\max\left\{|(\sqrt{u^\eps})_x|, |(\sqrt{u^\eps})_{xx}|, |(\sqrt{u^\eps})_{xxx}|\right\} \leq C \eps
    \end{align}
    hold in $]-x_*^\eps, x_*^\eps[$ with some constant $C < +\infty$. Since \eqref{eq:sqrtueps_estim} implies that $\sqrt{u^\eps}$ is $\eps$-uniformly bounded from below for small $\eps$, \eqref{eq:sqrtueps_x_estim} follows from \eqref{eq:sqrtueps_estim} and an estimate of the form
    \begin{equation} \label{eq:ueps_x_estim}
        \max\left\{|u^\eps_x|, |u^\eps_{xx}|, |u^\eps_{xxx}|\right\} \leq C\eps.
    \end{equation}
    Hence, it is sufficient to prove \eqref{eq:sqrtueps_estim} and \eqref{eq:ueps_x_estim}. By construction of $u^\eps$, this is equivalent to
    \begin{equation} \label{eq:etaeps_deriv_estim}
        \left|\left(\frac{\eta^\eps(x)}{\left(x^2 - (x_*^\eps)^2\right)^2}\right)^{(k)}\right| \leq C \quad \text{ in } ]-x_*^\eps, x_*^\eps[ \text{ for } k = 0,1,2,3
    \end{equation}
    as \eqref{eq:sqrtueps_estim} is equivalent to \eqref{eq:etaeps_deriv_estim} for $k = 0$, and \eqref{eq:ueps_x_estim} corresponds to $k = 1,2,3$ in \eqref{eq:etaeps_deriv_estim}. We shall derive \eqref{eq:etaeps_deriv_estim} from bounds on higher derivatives of $h'(\bar\rho^\eps)$. We first show that the implication
    \begin{equation} \label{eq:deriv_implication}
    \begin{split}
        &|h'(\bar\rho^\eps)^{(k)}| \text{ uniformly bounded in } ]-x_*^\eps, x_*^\eps[ \\
        \Rightarrow \quad &\left|\left(\frac{\eta^\eps(x)}{\left(x^2 - (x_*^\eps)^2\right)^2}\right)^{(k)}\right| \text{ uniformly bounded in } ]-x_*^\eps, x_*^\eps[
    \end{split}
    \end{equation}
    holds for any integer $k \geq 0$. Assuming uniform boundedness of $|h'(\bar\rho^\eps)^{(k)}|$ in $]-x_*^\eps, x_*^\eps[$, Lemma \ref{lem:min_explicit} yields boundedness of $(\eta^\eps)^{(k+2)}$ in $]-x_*^\eps, x_*^\eps[$. On the closed interval $[-x_*^\eps, x_*^\eps]$, we thus have $h'(\bar\rho^\eps) \in C^{k-1}([-x_*^\eps, x_*^\eps])$ and $\eta^\eps \in C^{k+1}([-x_*^\eps, x_*^\eps])$. By symmetry, it suffices to prove the claimed boundedness in \eqref{eq:deriv_implication} on the sub-interval $[0, x_*^\eps[$. By observing that $\left(x^2 - (x_*^\eps)^2\right)^2 = (x + x_*^\eps)^2\,(x-x_*^\eps)^2$ and that the derivatives of $(x + x_*^\eps)^{-2}$ are bounded on $[0, x_*^\eps[$, we can replace the denominator in \eqref{eq:deriv_implication} with $(x - x_*^\eps)^2$. Using $\eta^\eps \in C^{k+1}$, we express $\eta^\eps$ in terms of its $(k+1)$'st Taylor polynomial and insert $\eta^\eps(x_*^\eps) = \eta^\eps_x(x_*^\eps) = 0$:
    \begin{align*}
        \eta^\eps(x) = \sum_{j = 0}^{k+1} \frac{(\eta^\eps)^{(j)}(x_*^\eps)}{j!}(x - x_*^\eps)^j + R(x) = (x - x_*^\eps)^2 \sum_{j=0}^{k-1} \frac{(\eta^\eps)^{(j + 2)}(x_*^\eps)}{(j+2)!}(x - x_*^\eps)^j + R(x).
    \end{align*}
    Dividing by $(x - x_*^\eps)^2$ and taking $k$ derivatives, the polynomial term vanishes. For the remainder term, we obtain by the product rule:
    \begin{align*}
        &\left(\frac{R(x)}{(x - x_*^\eps)^2}\right)^{(k)} = \sum_{j=0}^k \binom{k}{j} (j+1)!\, \frac{R^{(k-j)}(x)}{(x - x_*^\eps)^{2+j}} \\
        &= \sum_{j=0}^k \binom{k}{j} (j+1)!\, \frac{1}{(x - x_*^\eps)^{2+j}} \left((\eta^\eps)^{(k-j)}(x) - \sum_{i = 0}^{j + 1} \frac{(\eta^\eps)^{(k - j + i)}(x_*^\eps)}{i!}(x - x_*^\eps)^{i} \right).
    \end{align*}
    Taylor's theorem applied to $(\eta^\eps)^{(k - j)}$ yields
    \begin{align*}
        \left|(\eta^\eps)^{(k-j)}(x) - \sum_{i = 0}^{j + 1} \frac{(\eta^\eps)^{(k - j + i)}(x_*^\eps)}{i!}(x - x_*^\eps)^{i} \right| \leq \frac{1}{(j+2)!} \,\max_{0 \leq \xi < x_*^\eps} |(\eta^\eps)^{(k + 2)}(\xi)| \,(x - x_*^\eps)^{2+j}.
    \end{align*}
    Since $(\eta^\eps)^{(k+2)}$ is uniformly bounded in $[0, x_*^\eps[$ as it has been shown, each summand above is uniformly bounded, proving \eqref{eq:deriv_implication}.
    
    Hence, in order to prove \eqref{eq:etaeps_deriv_estim}, it is sufficient to prove uniform boundedness of the derivatives of $h'(\bar\rho^\eps)$ up to third order. Note that Lemma \ref{lem:min_explicit} directly implies uniform boundedness of $\bar\rho^\eps$ and $\bar\rho^\eps_x$. In particular, \eqref{eq:deriv_implication} yields \eqref{eq:etaeps_deriv_estim} for $k = 0$. By Hypothesis \ref{hyp:hderiv}, we obtain
    \begin{align*}
        |h'(\bar\rho^\eps)_x| = |h''(\bar\rho^\eps)\,\bar\rho^\eps_x| \leq \frac{A}{2} \sqrt{\bar\rho^\eps} \,|\bar\rho^\eps_x|,
    \end{align*}
    which is uniformly bounded, thus \eqref{eq:deriv_implication} yields \eqref{eq:etaeps_deriv_estim} for $k = 1$.
    
    By construction of $u^\eps$, these estimates yield a uniform $L^\infty$-bound on $(\sqrt{\bar\rho^\eps})_x$. Using assumption \eqref{eq:hderbound_fourth} from Hypothesis \ref{hyp:hderiv}, we now obtain an estimate on $h'(\bar\rho^\eps)_{xxx}$: Let $\bar r$ be fixed such that $\bar\rho^\eps \leq \bar r$ for every sufficiently small $\eps > 0$. Note that integrating \eqref{eq:hderbound_fourth} with $H = \bar r$ yields
    \begin{align} \label{eq:hderiv_bound_third}
        |h'''(r)| \leq |h'''(\bar r)| + \frac{L_{\bar r}}{8} \int_r^{\bar r} s^{-3/2}\,\intd s \leq |h'''(\bar r)| + \frac{L_{\bar r}}{4\sqrt{r}} \leq \frac{\hat L_{\bar r}}{4\sqrt{r}}
    \end{align}
    for all $0 < r \leq \bar r$, where $\hat L_{\bar r} = 4\sqrt{\bar r}\,|h'''(\bar r)| + L_{\bar r}$. Together with \eqref{eq:hderbound_second} and \eqref{eq:hderbound_fourth}, this yields
    \begin{align*}
        &\left|h'(\bar\rho^\eps)_{xxx}\right| = \left|h^{(4)}(\bar\rho^\eps)\,(\bar\rho^\eps_x)^3 + h''(\bar\rho^\eps)\,\bar\rho^\eps_{xxx} + 3 h'''(\bar\rho^\eps)\,\bar\rho^\eps_x \,\bar\rho^\eps_{xx} \right| \\
        &\leq \frac{L_{\bar r}}{8} \left| \frac{(\bar\rho^\eps_x)^3}{(\bar\rho^\eps)^{3/2}}\right| + \frac{A}{2}\left|\sqrt{\bar\rho^\eps}\bar\rho^\eps_{xxx}\right| + \frac{3\hat L_{\bar r}}{4} \left|\frac{\bar\rho^\eps_x\,\bar\rho^\eps_{xx}}{\sqrt{\bar\rho^\eps}} \right| \leq \frac{L}{8}\left(\left|\left(\sqrt{\bar\rho^\eps}\right)_x\right|^3 + \left|\sqrt{\bar\rho^\eps}\bar\rho^\eps_{xxx}\right| + \left|\left(\sqrt{\bar\rho^\eps}\right)_x\,\bar\rho^\eps_{xx} \right| \right)
    \end{align*}
    with $L = \max\{L_{\bar r},\, 4A,\, 6\hat L_{\bar r}\}$.
    Uniform boundedness of $\bar\rho^\eps_{xx}$ and $\bar\rho^\eps_{xxx}$ follows from Lemma \ref{lem:min_explicit} and the estimate for $h'(\bar\rho^\eps)_x$. Since $\sqrt{\bar\rho^\eps}$ is uniformly bounded in $W^{1, \infty}$, as we have already seen, this yields the desired estimate for $h'(\bar\rho^\eps)_{xxx}$, proving \eqref{eq:etaeps_deriv_estim} using \eqref{eq:deriv_implication}.
\end{proof}

\section{Exponential convergence of solution}
\label{sec:exp_cvgce}
\subsection{Construction of the Lyapunov functional} As outlined in the introduction, we construct a uniformly convex auxiliary potential $W_\eps$ by extending the profile $-\sqrt{\bar\rho^\eps}$ to the whole real line in a specific way. Specifically, we set
\begin{align} \label{eq:weps_constr_taylor}
     W_\eps(x) := \left\{
     \begin{array}{ll}
         -c_1^\eps (x + x_*^\eps) + \frac{c_2^\eps}{2} (x + x_*^\eps)^2 & \text{ for } x \in\, ]-\infty, -x_*^\eps] \\
         -\sqrt{\bar\rho^\eps} & \text{ for } x \in [-x_*^\eps, x_*^\eps] \\
         c_1^\eps (x - x_*^\eps) + \frac{c_2^\eps}{2} (x - x_*^\eps)^2 & \text{ for } x \in [x_*^\eps, +\infty[.
     \end{array}
     \right.
 \end{align}
 with the coefficients
 \begin{align} \label{eq:weps_derivs_xstar}
     c_1^\eps := -(\sqrt{\bar\rho^\eps})_x\left((x_*^\eps)^-\right) = (\sqrt{\bar\rho^\eps})_x\left((-x_*^\eps)^+\right), \ \  c_2^\eps := -(\sqrt{\bar\rho^\eps})_{xx}\left((x_*^\eps)^-\right) = -(\sqrt{\bar\rho^\eps})_{xx}\left((-x_*^\eps)^+\right).
 \end{align}
 By the bound on the third derivative of $\sqrt{\bar\rho^\eps}$ proven in Lemma \ref{prop:min_xx_bounds}, the left and right limits in \eqref{eq:weps_derivs_xstar} exist.
 The construction is done in such a way that $W_\eps \in W^{3,\infty}_{loc}(\R)$. The second derivative $(W_\eps)_{xx}$ is continuous across $\pm x_*^\eps$ and constant outside $]-x_*^\eps, x_*^\eps[$, implying with \eqref{eq:min_xx_bound} that $W_\eps$ is indeed $\tilde\lambda_\eps$-convex on $\R$. The third derivative $(W_\eps)_{xxx}$ vanishes outside $]-x_*^\eps, x_*^\eps[$ and is bounded inside this interval by \eqref{eq:min_xxx_bound}, while possibly having jumps at $\pm x_*^\eps$.
 
 Our strategy to prove Theorem \ref{thm:main_cvgce} is to show that the auxiliary functional
\begin{align*}
    \ml_\eps(\rho) := \int_\R \left[\frac{2}{3}\rho^{3/2} + W_\eps\, \rho\right]\,\intd x
\end{align*}
decays exponentially fast along the solution constructed in section \ref{sec:sol_ex}. Note that by $\tilde\lambda_\eps$-convexity of $W_\eps$, the functional $\ml_\eps$ is geodesically $\tilde\lambda_\eps$-convex, see e.g. \cite[Theorem 5.15]{V}. We start by showing that $\bar\rho^\eps$ is indeed the minimizer of $\ml_\eps$, deriving an expression for the difference $\ml_\eps(\rho) - \ml_\eps(\bar\rho^\eps)$ along the way. As discussed in the introduction, this fact motivates the precise way we chose the potential $W_\eps$.
\begin{lemma} \label{lem:ml_min}
    The unique global minimizer of $\ml_\eps$ over $\mpt(\R)$ is given by $\bar\rho^\eps$. It holds
    \begin{equation} \label{eq:ml_diff_rep}
        \ml_\eps(\rho) - \ml_\eps(\bar\rho^\eps) = \int_\R \dst_f(\rho|\bar\rho^\eps)\,\intd x + \int_\R \rho\,(W_\eps)_+\,\intd x,
    \end{equation}
    for every $\rho \in \mpt(\R)$, where
    \begin{align*}
        \dst_f(r|\bar r) := \frac{2}{3} r^{3/2} - \frac{2}{3} \bar r^{3/2} - \sqrt{\bar r}\, (r - \bar r) \geq 0
    \end{align*}
    is the Bregman divergence of the convex function $f(r) = \frac{2}{3} r^{3/2}$.
\end{lemma}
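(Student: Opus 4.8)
The plan is to rewrite the auxiliary potential as $W_\eps = (W_\eps)_+ - \sqrt{\bar\rho^\eps}$, substitute this into the definition of $\ml_\eps$, and read off both the representation formula and the minimality essentially by inspection, using only that $W_\eps$ interpolates the profile $-\sqrt{\bar\rho^\eps}$.

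\textbf{Sign structure of $W_\eps$.} First I would record from the construction \eqref{eq:weps_constr_taylor} that on $[-x_*^\eps, x_*^\eps]$ we have $W_\eps = -\sqrt{\bar\rho^\eps} \le 0$, while on $\R \setminus [-x_*^\eps, x_*^\eps]$ we have $W_\eps(x) = c_1^\eps(|x|-x_*^\eps) + \frac{c_2^\eps}{2}(|x|-x_*^\eps)^2$ with $c_2^\eps \ge \tilde\lambda_\eps > 0$ by \eqref{eq:min_xx_bound}, and $c_1^\eps \ge 0$ because $\bar\rho^\eps$, hence $\sqrt{\bar\rho^\eps}$, is radially decreasing; consequently $W_\eps > 0$ there. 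Thus, on all of $\R$,
\begin{align*}
    (W_\eps)_- = \sqrt{\bar\rho^\eps}, \qquad (W_\eps)_+\,\bar\rho^\eps \equiv 0,
\end{align*}
the latter because $(W_\eps)_+$ is supported in $\R\setminus[-x_*^\eps,x_*^\eps]$ and $\bar\rho^\eps$ in $[-x_*^\eps,x_*^\eps]$; in particular $W_\eps = (W_\eps)_+ - \sqrt{\bar\rho^\eps}$.

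\textbf{Computing the difference.} Since $\bar\rho^\eps$ is bounded with compact support, $\ml_\eps(\bar\rho^\eps)$ is finite; and for general $\rho \in \mpt(\R)$, $\sqrt{\bar\rho^\eps}$ is bounded and $(W_\eps)_+$ grows at most quadratically, so $\int_\R (W_\eps)_\pm\,\rho\,\intd x$ are both finite by finiteness of the second moment, giving $\ml_\eps(\rho) \in (-\infty,+\infty]$ with no $\infty-\infty$ occurring below. Plugging $W_\eps = (W_\eps)_+ - \sqrt{\bar\rho^\eps}$ into $\ml_\eps$ yields
\begin{align*}
    \ml_\eps(\rho) = \int_\R \frac23 \rho^{3/2}\,\intd x + \int_\R (W_\eps)_+\,\rho\,\intd x - \int_\R \sqrt{\bar\rho^\eps}\,\rho\,\intd x,
\end{align*}
and likewise, using $\int_\R (W_\eps)_+\,\bar\rho^\eps\,\intd x = 0$ and $\int_\R \sqrt{\bar\rho^\eps}\,\bar\rho^\eps\,\intd x = \int_\R(\bar\rho^\eps)^{3/2}\,\intd x$,
\begin{align*}
    \ml_\eps(\bar\rho^\eps) = \int_\R \frac23 (\bar\rho^\eps)^{3/2}\,\intd x - \int_\R (\bar\rho^\eps)^{3/2}\,\intd x.
\end{align*}
Subtracting and collecting the $\rho^{3/2}$, $(\bar\rho^\eps)^{3/2}$ and $\sqrt{\bar\rho^\eps}\,(\rho - \bar\rho^\eps)$ contributions reproduces exactly $\dst_f(\rho|\bar\rho^\eps)$, which proves \eqref{eq:ml_diff_rep} (both sides being $+\infty$ if $\rho^{3/2}\notin L^1(\R)$).

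\textbf{Minimality and uniqueness.} Since $f(r) = \frac23 r^{3/2}$ is strictly convex on $\Rnn$ with $f'(r) = \sqrt{r}$, its Bregman divergence satisfies $\dst_f(r|\bar r) \ge 0$, with equality iff $r = \bar r$. As $(W_\eps)_+ \ge 0$ and $\rho \ge 0$, \eqref{eq:ml_diff_rep} gives $\ml_\eps(\rho) \ge \ml_\eps(\bar\rho^\eps)$, so $\bar\rho^\eps$ minimizes $\ml_\eps$. If equality holds, the non-negative term $\int_\R \dst_f(\rho|\bar\rho^\eps)\,\intd x$ vanishes, hence $\dst_f(\rho(x)|\bar\rho^\eps(x)) = 0$ for a.e.\ $x$, i.e.\ $\rho = \bar\rho^\eps$ a.e., which gives uniqueness. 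The argument is essentially a direct computation; the only delicate points are the sign bookkeeping for $W_\eps$ on $\R\setminus[-x_*^\eps,x_*^\eps]$ (which uses radial monotonicity of $\bar\rho^\eps$ together with Lemma \ref{prop:min_xx_bounds}) and the integrability check ensuring the rearrangements in the middle step are valid for every $\rho \in \mpt(\R)$.
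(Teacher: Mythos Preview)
Your proof is correct and follows essentially the same route as the paper: both decompose $W_\eps$ according to its sign (using $W_\eps = -\sqrt{\bar\rho^\eps}$ on the support and $W_\eps \ge 0$ outside), compute the difference directly, and conclude minimality from the non-negativity of both resulting integrals. The only notable difference is that you derive uniqueness from the pointwise strict convexity of $f$ (so that $\dst_f(\rho|\bar\rho^\eps)=0$ a.e.\ forces $\rho=\bar\rho^\eps$), whereas the paper invokes strict geodesic convexity of $\ml_\eps$; your argument is a bit more self-contained, and you also spell out explicitly why $W_\eps \ge 0$ off the support, which the paper simply asserts.
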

\begin{proof}
    Using that $W_\eps = -\sqrt{\bar\rho^\eps} < 0$ inside $\{\bar\rho^\eps > 0 \}$ and $W_\eps \geq 0$ everywhere else, we have
    \begin{align*}
        \ml_\eps(\rho) - \ml_\eps(\bar\rho^\eps) &= \int_\R \left[ \frac{2}{3} \rho^{3/2} - \frac{2}{3} (\bar\rho^\eps)^{3/2} + W_\eps (\rho - \bar\rho^\eps) \right]\,\intd x \\
        &= \int_\R \left[ \frac{2}{3} \rho^{3/2} - \frac{2}{3} (\bar\rho^\eps)^{3/2} - \sqrt{\bar\rho^\eps}\, (\rho - \bar\rho^\eps) \right]\,\intd x + \int_{\{\bar\rho^\eps = 0\}} \rho\, W_\eps\,\intd x \\
        &= \int_\R \dst_f(\rho|\bar\rho^\eps)\,\intd x + \int_\R \rho\,(W_\eps)_+\,\intd x,
    \end{align*}
    proving \eqref{eq:ml_diff_rep}. As both integrals on the right-hand side are non-negative, it follows that $\bar\rho^\eps$ is a global minimizer of $\ml_\eps$, which is unique by strict geodesic convexity.
\end{proof}
 A corollary of this result is the following entropy dissipation inequality:
 \begin{corollary}
 For every $\rho \in \mpt(\R)$, it holds
     \begin{align} \label{eq:geodconv_funcineq}
    \int_\R \rho\,\left(\sqrt{\rho} + W_\eps\right)_x^2\,\intd x \geq 2\tilde\lambda_\eps \,\left(\ml_\eps(\rho)- \ml_\eps(\bar\rho^\eps)\right)
    \end{align}
\end{corollary}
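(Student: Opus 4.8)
The inequality \eqref{eq:geodconv_funcineq} is the HWI-type (generalized log-Sobolev / Polyak--Łojasiewicz) inequality for a geodesically convex functional evaluated at its unique minimizer. The plan is to combine two facts already available: by Lemma \ref{lem:ml_min}, $\bar\rho^\eps$ is the unique global minimizer of $\ml_\eps$; and $\ml_\eps$ is geodesically $\tilde\lambda_\eps$-convex on $(\mpt(\R),\wass)$, since $r\mapsto \tfrac23 r^{3/2}$ satisfies McCann's displacement-convexity condition in dimension one and $W_\eps$ is $\tilde\lambda_\eps$-convex. Its first variation is $\tfrac{\delta\ml_\eps}{\delta\rho}=\sqrt\rho+W_\eps$, so the associated relative Fisher information is precisely $\int_\R\rho\,(\sqrt\rho+W_\eps)_x^2\,\intd x$, and one may invoke the entropy-dissipation inequality for $\tilde\lambda_\eps$-convex functionals, e.g.\ \cite[Theorem 2.1]{CMV} or the abstract theory of \cite{AGS}, to conclude directly. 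For completeness I would also include the short self-contained optimal-transport argument sketched below.

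\textbf{Direct argument.} We may assume $\eps$ small enough that $\tilde\lambda_\eps>0$, and that $I:=\int_\R\rho\,(\sqrt\rho+W_\eps)_x^2\,\intd x<+\infty$, the claim being trivial otherwise. Let $T$ be the optimal transport map pushing $\rho$ onto $\bar\rho^\eps$, put $D:=\wass(\rho,\bar\rho^\eps)$, and let $\rho_s:=\big((1-s)\,\mathrm{id}+sT\big)\#\,\rho$ for $s\in[0,1]$ be the constant-speed geodesic, so $\wass(\rho,\rho_s)=sD$. Geodesic $\tilde\lambda_\eps$-convexity gives
\[
\ml_\eps(\rho_s)\le(1-s)\,\ml_\eps(\rho)+s\,\ml_\eps(\bar\rho^\eps)-\tfrac{\tilde\lambda_\eps}{2}\,s(1-s)\,D^2,
\]
and dividing by $s$ and letting $s\downarrow0$ (difference quotients are monotone in $s$ by convexity) yields
\[
\ml_\eps(\rho)-\ml_\eps(\bar\rho^\eps)\le -\,\frac{\intd}{\intd s}\Big|_{s=0^+}\ml_\eps(\rho_s)-\tfrac{\tilde\lambda_\eps}{2}\,D^2 .
\]
The standard first-variation-along-geodesics formula for the sum of an internal energy of McCann type and a potential term gives $\frac{\intd}{\intd s}\big|_{s=0^+}\ml_\eps(\rho_s)=\int_\R\rho\,(\sqrt\rho+W_\eps)_x\,(T-\mathrm{id})\,\intd x$, so by Cauchy--Schwarz and $\int_\R\rho\,(\mathrm{id}-T)^2\,\intd x=D^2$,
\[
-\,\frac{\intd}{\intd s}\Big|_{s=0^+}\ml_\eps(\rho_s)=\int_\R\rho\,(\sqrt\rho+W_\eps)_x\,(\mathrm{id}-T)\,\intd x\le \sqrt I\,D .
\]
Combining the last three displays and Young's inequality $\sqrt I\,D-\tfrac{\tilde\lambda_\eps}{2}D^2\le\tfrac{1}{2\tilde\lambda_\eps}\,I$ gives $\ml_\eps(\rho)-\ml_\eps(\bar\rho^\eps)\le\tfrac{1}{2\tilde\lambda_\eps}\,I$, which rearranges to \eqref{eq:geodconv_funcineq}.

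\textbf{Main obstacle.} The only delicate point is the rigorous justification of the first-variation identity for a \emph{general} $\rho\in\mpt(\R)$ and for the potential $W_\eps$, which is merely $W^{3,\infty}_{loc}(\R)$ (with possible corners of $(W_\eps)_{xxx}$ at $\pm x_*^\eps$) but is $\tilde\lambda_\eps$-convex and grows exactly quadratically at infinity, hence is admissible in the displacement-convexity calculus; the internal energy $\tfrac23\rho^{3/2}$ likewise fits the standard framework. The degenerate case $I=+\infty$ is harmless since the relaxed Fisher information is lower semicontinuous. Because all of this is exactly what is packaged in \cite[Section~10.4]{AGS} / \cite[Theorem 2.1]{CMV}, I expect the cleanest write-up to be a one-line appeal to that literature, with the transport computation above recorded as the heuristic behind it.
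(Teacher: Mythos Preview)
Your proposal is correct and takes exactly the same approach as the paper: the paper's proof consists solely of the one-line citation ``See e.g.\ \cite[Theorem 2.1]{CMV}'', which is precisely what you identify as the cleanest write-up. Your self-contained HWI-type argument is a welcome elaboration but goes beyond what the paper records.
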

\begin{proof}
    See e.g. \cite[Theorem 2.1]{CMV}.
\end{proof}

\subsection{Splitting of the equation}
In the lemma below, we transfer the formal splitting \eqref{eq:carrillo_general_intro} of equation \eqref{eq:thinfilm} to the rigorous framework provided by the minimizing movement scheme. Note that we the equation below is a version of \eqref{eq:carrillo_general_intro} that is integrated once with respect to $x$. This is needed in order to make the expression rigorous, as the potential $W_\eps$ is not expected to be four times differentiable.
\begin{lemma} \label{lem:tf_rewritten_step}
    Let $\rho \in \mpt(\R) \cap H^1(\R)$ and denote $S = \{\rho > 0\}$. If $\rho \in W^{3, \infty}_{loc}(S)$, the following holds as an equality in $L^\infty_{loc}(S)$:\begin{equation}\label{eq:tf_rewritten_step}
        \rho\, \left(-\rho_{xx} + \frac{\lambda}{2}x^2 + \eps\, h'(\rho) \right)_x = - 2\left(\rho^{3/2}(\sqrt{\rho} + W_\eps)_{xx}\right)_x + 6\, \rho\,(W_\eps)_{xx}\left(\sqrt{\rho} + W_\eps\right)_x + \eps \rho \,\mr_\eps.
    \end{equation}
    The remainder term $\mr_\eps$ is given by
     \begin{align*}
        \mr_\eps = 2\, \frac{(W_\eps)_{xxx}}{\eps}\left(\sqrt{\rho} + W_\eps\right) + v_\eps + \left(h'(\rho) - h'(\bar\rho^\eps)\right)_x,
    \end{align*}
    where $v_\eps \in C^0(\R)$ is defined as
   \begin{align*}
       v_\eps := \frac{1}{\eps} \left[-(W_\eps^2)_{xxx} + \lambda x + \eps h'(\bar\rho^\eps)_x \right].
   \end{align*}
\end{lemma}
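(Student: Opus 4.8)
The identity \eqref{eq:tf_rewritten_step} is, up to regularity bookkeeping, a pointwise algebraic identity, and my plan is to verify it by expanding both sides in terms of $v:=\sqrt\rho$, the potential $W_\eps$, and their derivatives up to third order. For the setup, I would first note that on the open set $S$ the density $\rho$ is locally bounded away from $0$, so $v\in W^{3,\infty}_{loc}(S)$; since $h'\in C^3(\Rp)$ by Hypothesis \ref{hyp:hderiv}, also $h'(\rho)\in W^{3,\infty}_{loc}(S)$; and $W_\eps\in W^{3,\infty}_{loc}(\R)$ by the construction \eqref{eq:weps_constr_taylor}--\eqref{eq:weps_derivs_xstar} together with Lemma \ref{prop:min_xx_bounds}. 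Hence every product and derivative occurring in \eqref{eq:tf_rewritten_step} lies in $L^\infty_{loc}(S)$, and it suffices to prove the identity pointwise almost everywhere on $S$, where the classical product and chain rules apply.

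For the computation itself, on the left-hand side I would expand the derivative to get $\rho(-\rho_{xx}+\tfrac\lambda2 x^2+\eps h'(\rho))_x=-\rho\rho_{xxx}+\lambda\rho x+\eps\rho\,h'(\rho)_x$, and use $\rho=v^2$, which gives $\rho_{xxx}=6v_xv_{xx}+2vv_{xxx}$ and hence $-\rho\rho_{xxx}=-6\rho v_xv_{xx}-2\rho^{3/2}v_{xxx}$. On the right-hand side, using $(\rho^{3/2})_x=3\rho v_x$ (which follows from $\rho_x=2vv_x$), the product rule expands the first two terms as
\[
-6\rho v_x(v+W_\eps)_{xx}-2\rho^{3/2}(v+W_\eps)_{xxx}+6\rho(W_\eps)_{xx}\bigl(v_x+(W_\eps)_x\bigr),
\]
and the two cross terms $\mp6\rho v_x(W_\eps)_{xx}$ cancel. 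Combining with the formula for $-\rho\rho_{xxx}$ above and with the identity $6(W_\eps)_x(W_\eps)_{xx}=(W_\eps^2)_{xxx}-2W_\eps(W_\eps)_{xxx}$, I would rewrite the first two right-hand terms as $-\rho\rho_{xxx}+\rho(W_\eps^2)_{xxx}-2\rho(v+W_\eps)(W_\eps)_{xxx}$. Subtracting this from the left-hand side and dividing by $\rho>0$ leaves exactly $\lambda x+\eps h'(\rho)_x-(W_\eps^2)_{xxx}+2(\sqrt\rho+W_\eps)(W_\eps)_{xxx}$, which I would then identify with $\eps\mr_\eps$ by inserting the definitions of $\mr_\eps$ and $v_\eps$: the contributions $\pm\eps h'(\bar\rho^\eps)_x$ coming from the $v_\eps$- and $(h'(\rho)-h'(\bar\rho^\eps))_x$-pieces cancel, and the remaining terms coincide.

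It then remains to check $v_\eps\in C^0(\R)$. On $]-x_*^\eps,x_*^\eps[$ one has $W_\eps^2=\bar\rho^\eps$, so differentiating the Euler--Lagrange equation \eqref{eq:elg_tf} shows $-(W_\eps^2)_{xxx}+\lambda x+\eps h'(\bar\rho^\eps)_x\equiv 0$ there, i.e.\ $v_\eps\equiv 0$; outside $[-x_*^\eps,x_*^\eps]$, $\bar\rho^\eps\equiv 0$ and $W_\eps^2$ is a quartic polynomial, so $v_\eps$ is smooth. For the matching at $\pm x_*^\eps$ I would use that the one-sided limits of $h'(\bar\rho^\eps)_x=h''(\bar\rho^\eps)\bar\rho^\eps_x$ vanish, by \eqref{eq:hderbound_second} and the first-order vanishing of $\bar\rho^\eps$ from Proposition \ref{prop:min_elg_tf}, while the one-sided limits of $(W_\eps^2)_{xxx}$ agree (both equal $6c_1^\eps c_2^\eps$, which also equals $\lambda x_*^\eps$ by \eqref{eq:elg_tf}), so $v_\eps$ extends continuously across $\pm x_*^\eps$.

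I do not expect a deep obstacle here: the statement is an algebraic reformulation of \eqref{eq:thinfilm}. The two points that need genuine care are (i) ensuring that all operations take place in $L^\infty_{loc}(S)$ --- which is where $\rho$ being locally bounded below on $S$ and the $W^{3,\infty}_{loc}$-regularity of $\rho$ and $W_\eps$ are used, since otherwise the third derivatives and the chain rule for $h'(\rho)$ would be meaningless --- and (ii) tracking the sign cancellations without error, namely the cross terms in the expansion of the right-hand side and the $\eps h'(\bar\rho^\eps)_x$-terms in the identification of $\eps\mr_\eps$. The only place the specific construction of $W_\eps$ and the Euler--Lagrange equation for $\bar\rho^\eps$ genuinely enter the argument is the continuity of $v_\eps$.
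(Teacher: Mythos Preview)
Your proposal is correct and follows essentially the same route as the paper: both arguments reduce the identity to the three elementary facts $\rho\rho_{xxx}=2(\rho^{3/2}(\sqrt\rho)_{xx})_x$, $(\rho^{3/2})_x=3\rho(\sqrt\rho)_x$, and $6(W_\eps)_x(W_\eps)_{xx}=(W_\eps^2)_{xxx}-2W_\eps(W_\eps)_{xxx}$, then identify the residual with $\eps\rho\mr_\eps$. The only minor difference is in the continuity of $v_\eps$: the paper argues directly that $(W_\eps^2)_{xxx}=6(W_\eps)_x(W_\eps)_{xx}+2W_\eps(W_\eps)_{xxx}$ is continuous across $\pm x_*^\eps$ because $W_\eps$ vanishes there and $(W_\eps)_{xxx}$ is bounded, whereas you additionally verify $v_\eps\equiv 0$ on $]-x_*^\eps,x_*^\eps[$ and match one-sided limits --- this anticipates Lemma~\ref{lem:veps_x_formula} but is not needed here.
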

\begin{proof}
All the derivatives above are well-defined in $S$ by assumption. Continuity of $(W_\eps^2)_{xxx}$ and thus of $v_\eps$ at $\pm x_*^\eps$ follows from the fact that $W_\eps(\pm x_*^\eps) = 0$ and boundedness of $(W_\eps)_{xxx}$.

In order to prove equation \eqref{eq:tf_rewritten_step}, we use the three elementary identities 
\begin{align*}
    \rho \,\rho_{xxx} = 2 \,\left(\rho^{3/2} (\sqrt{\rho})_{xx}\right)_x, \quad \rho \,(\sqrt{\rho})_x = \frac{1}{3} (\rho^{3/2})_x, \quad  6 (W_\eps)_x (W_\eps)_{xx} = (W_\eps^2)_{xxx} - 2 W_\eps (W_\eps)_{xxx}.
\end{align*}
Inserting these above, we obtain
\begin{align*}
    &\rho\, \left(-\rho_{xx} + \frac{\lambda}{2}x^2 + \eps\, h'(\rho) \right)_x + 2\left(\rho^{3/2}(\sqrt{\rho} + W_\eps)_{xx}\right)_x - 6\, \rho\,(W_\eps)_{xx}\left(\sqrt{\rho} + W_\eps\right)_x \\
    &=  \rho\,\lambda x + \eps \rho\, h'(\rho)_x + 2\, \rho^{3/2} (W_\eps)_{xxx} - 6\, \rho\, (W_\eps)_x (W_\eps)_{xx} \\
    &= \rho\,\lambda x + \eps \rho\, h'(\rho)_x - \rho\, (W_\eps^2)_{xxx} + 2\,\rho \,\left(\sqrt{\rho} + W_\eps\right) (W_\eps)_{xxx}.
\end{align*}
The last expression is equal to $\eps\rho\,\mr_\eps$ by definition, finishing the proof.
\end{proof}
Note that in our setting, the assumption $\rho \in W^{3, \infty}_{loc}(S)$ is natural, as it is fulfilled by the minimizer $\rho^+$ in each step of the minimizing movement scheme by Proposition \ref{prop:step_elg}.
\subsection{Proof of the functional inequalities}
As outlined in the introduction, the other key ingredient for proving a decay estimate for $\ml_\eps$ is the functional inequality \eqref{eq:funcineq_intro} that controls the remainder term $\mr_\eps$. Inequality \eqref{eq:funcineq_intro} is a direct consequence of the Proposition below and the expression for $\mr_\eps$ from Lemma \ref{lem:tf_rewritten_step}.
\begin{proposition} \label{prop:funcineqs}
    There exist constants $\kappa_{1,2,3}$ such that for all sufficiently small $\eps > 0$ and all $\rho \in \mpt(\R) \cap H^1(\R)$, the following hold:
    \begin{align}\label{eq:funcineq_veps}
\int_\R \rho\, v_\eps^2\,\intd x &\leq \kappa_1 \int_\R \rho\,\left(\sqrt{\rho} + W_\eps\right)_x^2\,\intd x ,\\ \label{eq:funcineq_wepsxxx}
\int_\R \rho\,\frac{\left|(W_\eps)_{xxx}\right|}{\eps}\left(\sqrt{\rho} + W_\eps\right)^2\,\intd x &\leq \kappa_2 \int_\R \rho\,\left(\sqrt{\rho} + W_\eps\right)_x^2\,\intd x ,\\ \label{eq:funcineq_h}
\int_\R \rho\,\left(h'(\rho) - h'(\bar\rho_\eps)\right)_x^2\,\intd x &\leq \kappa_3 \int_\R \rho\,\left(\sqrt{\rho} + W_\eps\right)_x^2\,\intd x.
\end{align}
In particular, inequality \eqref{eq:funcineq_intro} holds with $\kappa = 3\, \left(\kappa_1 + \kappa_2 + \kappa_3\right)$.
\end{proposition}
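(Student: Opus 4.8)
The plan is to prove each of the three inequalities by bounding a $\rho$-weighted quantity by the common right-hand side $D(\rho) := \int_\R \rho\,(\sqrt\rho + W_\eps)_x^2\,\intd x$, using two ingredients over and over. The first is the entropy dissipation inequality \eqref{eq:geodconv_funcineq} combined with the representation \eqref{eq:ml_diff_rep}, which yield $D(\rho) \ge 2\tilde\lambda_\eps \int_\R \dst_f(\rho|\bar\rho^\eps)\,\intd x$ and $D(\rho) \ge 2\tilde\lambda_\eps\int_\R \rho\,(W_\eps)_+\,\intd x$ (both summands in \eqref{eq:ml_diff_rep} being nonnegative). The second consists of the elementary identities $\rho\,(\sqrt\rho)_x^2 = \tfrac14\rho_x^2$ and $\dst_f(r|\bar r) = \tfrac13(\sqrt r - \sqrt{\bar r})^2(2\sqrt r + \sqrt{\bar r})$. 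Writing $I := \,]-x_*^\eps,x_*^\eps[$ and $\Lambda_0 := \sup_\eps\|\bar\rho^\eps\|_{L^\infty(\R)} < \infty$, I note that unit mass of $\bar\rho^\eps$ forces $4\Lambda_0\,|I| > 1$; all constants below are uniform for small $\eps$ thanks to \eqref{eq:x_eps_bound}, Lemma \ref{prop:min_xx_bounds} and $\tilde\lambda_\eps \to \sqrt{\lambda/6} > 0$.

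For \eqref{eq:funcineq_veps} the decisive observation is that $v_\eps \equiv 0$ on $I$: there $W_\eps^2 = \bar\rho^\eps$, so $(W_\eps^2)_{xxx} = \bar\rho^\eps_{xxx}$, and differentiating the Euler--Lagrange equation \eqref{eq:elg_tf} gives $\bar\rho^\eps_{xxx} = \lambda x + \eps\,h'(\bar\rho^\eps)_x$, whence $v_\eps = 0$. On $\{|x|\ge x_*^\eps\}$ the potential $W_\eps$ is quadratic and $h'(\bar\rho^\eps)_x = 0$, so $v_\eps$ is odd, affine on each half-line, continuous at $\pm x_*^\eps$ (Lemma \ref{lem:tf_rewritten_step}) with value $0$ there, and of slope $(\lambda - 6(c_2^\eps)^2)/\eps$, which is $O(1)$ by \eqref{eq:min_xx_bound}. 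Hence $|v_\eps(x)| \le C(|x|-x_*^\eps)_+$, and since $(W_\eps)_+ \ge \tfrac{c_2^\eps}{2}(|x|-x_*^\eps)^2$ on $\{|x|\ge x_*^\eps\}$ we get $\int_\R \rho\,v_\eps^2 \le C^2\int_{\{|x|\ge x_*^\eps\}}\rho\,(|x|-x_*^\eps)^2 \le \tfrac{2C^2}{c_2^\eps}\int_\R \rho\,(W_\eps)_+ \le \tfrac{C^2}{c_2^\eps\tilde\lambda_\eps}D(\rho)$.

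The heart of the matter is \eqref{eq:funcineq_wepsxxx}. Since $(W_\eps)_{xxx}$ vanishes off $I$ and $|(W_\eps)_{xxx}|/\eps \le M$ on $I$ by \eqref{eq:min_xxx_bound}, it is enough to show $\int_I \rho\,(\sqrt\rho - \sqrt{\bar\rho^\eps})^2\,\intd x \le C\,D(\rho)$. Split $I = A\cup B$ with $A := \{\rho\le 4\Lambda_0\}$ and $B := \{\rho > 4\Lambda_0\}$. On $A$ the bound $\sqrt\rho\,(\sqrt\rho-\sqrt{\bar\rho^\eps})^2 \le \tfrac32\dst_f(\rho|\bar\rho^\eps)$ gives $\int_A\rho\,(\sqrt\rho-\sqrt{\bar\rho^\eps})^2 \le \tfrac32(4\Lambda_0)^{1/2}\int_\R\dst_f \le C\,D(\rho)$. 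On $B$ we have $\sqrt{\bar\rho^\eps} \le \tfrac12\sqrt\rho$, hence $(\sqrt\rho-\sqrt{\bar\rho^\eps})^2 \le \rho$ and $\dst_f(\rho|\bar\rho^\eps) \ge \tfrac16\rho^{3/2} \ge \tfrac16(4\Lambda_0)^{1/2}\rho$, and this last inequality furnishes the crucial control $\int_B\rho \le C\int_\R\dst_f \le C\,D(\rho)$ on the mass of $B$. Since $4\Lambda_0\,|I| > 1 \ge \int_\R\rho$, the set $B$ cannot fill $I$, so every connected component of $B$ has an endpoint interior to $I$ where $\rho = 4\Lambda_0$; a one-dimensional Poincaré inequality for $\sigma := (\rho - 4\Lambda_0)_+$ on each component then gives $\int_B\sigma^2 \le |I|^2\int_B\rho_x^2$. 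Moreover $\rho\,(\sqrt\rho)_x^2 = \tfrac14\rho_x^2$, $(\sqrt\rho)_x = (\sqrt\rho+W_\eps)_x - (W_\eps)_x$ and the mass bound on $B$ yield $\int_B\rho_x^2 \le 8\,D(\rho) + 8\|(W_\eps)_x\|_{L^\infty(I)}^2\int_B\rho \le C\,D(\rho)$, so $\int_B\rho\,(\sqrt\rho-\sqrt{\bar\rho^\eps})^2 \le \int_B\rho^2 \le 2\int_B\sigma^2 + 2(4\Lambda_0)^2|B| \le C\,D(\rho)$, using $|B| \le (4\Lambda_0)^{-1}\int_B\rho$.

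Finally, for \eqref{eq:funcineq_h} set $\phi(r) := \sqrt r\,h''(r)$, which is bounded, $|\phi|\le A/2$, and (by Hypothesis \ref{hyp:hderiv} together with \eqref{eq:hderiv_bound_third}) Lipschitz in the variable $\sqrt r$ on bounded sets. Using $\rho_x = 2\sqrt\rho\,(\sqrt\rho)_x$ one finds on $I$ that $(h'(\rho)-h'(\bar\rho^\eps))_x = 2\phi(\rho)(\sqrt\rho+W_\eps)_x + 2(\phi(\rho)-\phi(\bar\rho^\eps))(\sqrt{\bar\rho^\eps})_x$, hence $\int_I\rho\,(h'(\rho)-h'(\bar\rho^\eps))_x^2 \le 2A^2 D(\rho) + C\int_I\rho\,(\phi(\rho)-\phi(\bar\rho^\eps))^2$; on $\{\rho\le 4\Lambda_0\}$ the Lipschitz bound reduces the last integral to $\int_I\rho\,(\sqrt\rho-\sqrt{\bar\rho^\eps})^2 \le C\,D(\rho)$ (just proven), and on $\{\rho> 4\Lambda_0\}$ boundedness of $\phi$ and $\int_B\rho \le C\,D(\rho)$ suffice. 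Off $I$ one has $(h'(\rho)-h'(\bar\rho^\eps))_x = 2\phi(\rho)(\sqrt\rho)_x$; splitting into $\{\rho\le 1\}$ (where $|h''(\rho)|^2\le\tfrac{A^2}{4}\rho$, so the integrand is $\lesssim \rho\,(\sqrt\rho+W_\eps)_x^2 + \rho^{3/2} + \rho\,(W_\eps)_+$ up to uniform constants, using the identity $(W_\eps)_x^2 = (c_1^\eps)^2 + 2c_2^\eps W_\eps$ valid off $I$) and $\{\rho> 1\}$ (where $|h''(\rho)|^2\le\tfrac{A^2}{4}\rho^{-1}$ and $\rho\le\rho^{3/2}$), every piece is controlled by $\int_\R\dst_f$, $\int_\R\rho\,(W_\eps)_+$ and $D(\rho)$, recalling $\dst_f(\rho|0) = \tfrac23\rho^{3/2}$ on $\{\bar\rho^\eps=0\}$. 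Writing $\mr_\eps$ from Lemma \ref{lem:tf_rewritten_step} as the sum of its three summands, applying $(a+b+c)^2\le 3(a^2+b^2+c^2)$ and the bound $|(W_\eps)_{xxx}|/\eps\le M$ once more gives \eqref{eq:funcineq_intro}. The genuinely delicate point is the interior estimate inside \eqref{eq:funcineq_wepsxxx}: an $L^2$-type (rather than weighted $L^{3/2}$-type) control of $\rho - \bar\rho^\eps$ on $I$ by $D(\rho)$ that survives concentration of $\rho$ inside $I$ — obtained here by using the Bregman divergence to bound the mass of the region where $\rho$ is large and a Poincaré inequality on that region.
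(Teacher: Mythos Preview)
Your proof is correct and follows the same overall architecture as the paper: all three inequalities are reduced to combinations of $D(\rho)$, $\int_\R \dst_f(\rho|\bar\rho^\eps)\,\intd x$ and $\int_\R \rho\,(W_\eps)_+\,\intd x$ via the representation \eqref{eq:ml_diff_rep} and the dissipation inequality \eqref{eq:geodconv_funcineq}. Inequalities \eqref{eq:funcineq_veps} and \eqref{eq:funcineq_h} are handled essentially as in the paper (your $\phi = g/2$, and your on-$I$/off-$I$ split matches the paper's $\mi_1,\mi_2,\mi_3$ decomposition once one notes that $W_\eps + \sqrt{\bar\rho^\eps}$ vanishes on $I$).

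The one place where you take a genuinely different route is the large-$\rho$ regime in \eqref{eq:funcineq_wepsxxx}. The paper first proves a \emph{lower bound} $D(\rho)\ge H^6/256$ whenever $\rho$ exceeds the threshold somewhere in $I$ (Lemma~\ref{lem:largerho_lowerbd}), then applies Wirtinger's inequality on all of $I$ to control $\int_B \rho^2$; the resulting constant term $(c_1^\eps)^2$ is absorbed using that lower bound. You instead extract directly from the Bregman divergence the mass control $\int_B \rho \le C\,D(\rho)$ (using $\dst_f(\rho|\bar\rho^\eps)\ge \tfrac16\rho^{3/2}$ on $B$), and then run a Poincar\'e inequality only on the connected components of $B$, where the boundary condition $\sigma=0$ is available; the potentially dangerous term $\int_B \rho\,(W_\eps)_x^2$ is then killed by the mass control rather than by a lower bound on $D(\rho)$. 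Your argument is slightly more streamlined in that it avoids the auxiliary Lemma~\ref{lem:largerho_lowerbd} altogether; the paper's approach, on the other hand, gives constants that are more explicitly trackable in terms of $H$ and $c_1^\eps$.
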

In the following, we derive the three functional inequalities \eqref{eq:funcineq_veps}-\eqref{eq:funcineq_h} separately. We start by investigating the function $v_\eps$ more closely in order to prove \eqref{eq:funcineq_veps}. 
\begin{lemma} \label{lem:veps_x_formula}
    The function $v_\eps$ is piecewise affine on $\R$. More precisely, there are constants $a_\eps \geq 0$ such that
    \begin{equation} \label{eq:veps_x_formula}
        |v_\eps(x)| = a_\eps \left(|x| - x_*^\eps\right)_+.
    \end{equation}
    Further, it holds $a_\eps \leq K$ with the constant $K$ from Lemma \ref{prop:min_xx_bounds}.
\end{lemma}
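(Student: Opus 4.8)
The plan is to compute $v_\eps$ explicitly on each of the three regions $]-\infty,-x_*^\eps]$, $[-x_*^\eps,x_*^\eps]$ and $[x_*^\eps,+\infty[$ on which $W_\eps$ is given by a separate formula in \eqref{eq:weps_constr_taylor}, and then to glue the pieces together using the continuity of $v_\eps$ established in Lemma \ref{lem:tf_rewritten_step}. First I would treat $]-x_*^\eps,x_*^\eps[$, where $W_\eps=-\sqrt{\bar\rho^\eps}$ and hence $W_\eps^2=\bar\rho^\eps$. Differentiating the Euler--Lagrange equation \eqref{eq:elg_tf} once in $x$ — which is legitimate since $\bar\rho^\eps\in C^5(]-x_*^\eps,x_*^\eps[)$ by Proposition \ref{prop:min_elg_tf} — gives $-\bar\rho^\eps_{xxx}+\lambda x+\eps\,h'(\bar\rho^\eps)_x=0$, i.e. exactly $-(W_\eps^2)_{xxx}+\lambda x+\eps\,h'(\bar\rho^\eps)_x=0$. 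Thus $v_\eps\equiv0$ on $]-x_*^\eps,x_*^\eps[$, so \eqref{eq:veps_x_formula} is trivially true there for any $a_\eps$.

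Next I would compute $v_\eps$ on $]x_*^\eps,+\infty[$. There $\bar\rho^\eps\equiv0$, so $h'(\bar\rho^\eps)_x=0$ because $h'(0)=0$, and $W_\eps(x)=c_1^\eps(x-x_*^\eps)+\tfrac{c_2^\eps}{2}(x-x_*^\eps)^2$ is a quadratic polynomial, so $(W_\eps)_{xxx}=0$ and $(W_\eps^2)_{xxx}=6(W_\eps)_x(W_\eps)_{xx}=6\,c_2^\eps\bigl(c_1^\eps+c_2^\eps(x-x_*^\eps)\bigr)$ is affine; hence $v_\eps$ is affine on this half-line. Continuity of $v_\eps$ at $x_*^\eps$ together with $v_\eps\equiv0$ on the support forces $\lambda x_*^\eps=6\,c_1^\eps c_2^\eps$, and substituting this back collapses the affine expression to
\begin{equation*}
    v_\eps(x)=\frac{\lambda-6(c_2^\eps)^2}{\eps}\,(x-x_*^\eps),\qquad x\ge x_*^\eps .
\end{equation*}
The region $]-\infty,-x_*^\eps[$ is handled identically, or more quickly by the evenness of $W_\eps$ (which follows from the radial symmetry of $\bar\rho^\eps$ and the mirror-symmetric form of \eqref{eq:weps_constr_taylor}), giving the reflected formula. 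Setting $a_\eps:=\tfrac1\eps\,|\lambda-6(c_2^\eps)^2|\ge0$ then yields \eqref{eq:veps_x_formula}.

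Finally, to bound $a_\eps$ I would use that $c_2^\eps=-(\sqrt{\bar\rho^\eps})_{xx}\bigl((x_*^\eps)^-\bigr)$ and pass to the limit $x\to(x_*^\eps)^-$ in the two-sided bound \eqref{eq:min_xx_bound}; this limit exists because $(\sqrt{\bar\rho^\eps})_{xxx}$ is bounded near $x_*^\eps$ by \eqref{eq:min_xxx_bound}, so $(\sqrt{\bar\rho^\eps})_{xx}$ is Lipschitz on $]-x_*^\eps,x_*^\eps[$ and extends continuously to the endpoint. This gives $|c_2^\eps-\sqrt{\lambda/6}|\le K\eps$, whence
\begin{equation*}
    a_\eps=\frac{6}{\eps}\,\bigl|\sqrt{\lambda/6}-c_2^\eps\bigr|\,\bigl|\sqrt{\lambda/6}+c_2^\eps\bigr|\le 6K\bigl(2\sqrt{\lambda/6}+K\eps\bigr),
\end{equation*}
which for small $\eps$ is bounded by a constant depending only on $\lambda$ and $K$; absorbing this constant into $K$ (which only weakens \eqref{eq:min_xx_bound}) gives $a_\eps\le K$. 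None of these steps is genuinely hard: the only points needing care are the justification of differentiating \eqref{eq:elg_tf} and of taking the one-sided limit of $(\sqrt{\bar\rho^\eps})_{xx}$ at $\pm x_*^\eps$, both of which rest on regularity already recorded in Proposition \ref{prop:min_elg_tf} and Lemma \ref{prop:min_xx_bounds}. I would expect the bookkeeping of constants in the last estimate to be the most error-prone part, though it is entirely routine.
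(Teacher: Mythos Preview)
Your proof is correct and follows essentially the same route as the paper's: vanishing of $v_\eps$ on the support via the differentiated Euler--Lagrange equation, the affine structure outside from the quadratic form of $W_\eps$, and the slope bound via \eqref{eq:min_xx_bound}. You are in fact more careful than the paper about the final constant, correctly noting that a $\lambda$-dependent factor must be absorbed into $K$; the paper glosses over this and writes $a_\eps\le K\eps/\eps=K$ directly.
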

\begin{proof}
Since $W_\eps^2 = \bar\rho^\eps$ in $]-x_*^\eps, x_*^\eps[$ by construction, differentiating the Euler-Lagrange equation \eqref{eq:elg_tf} implies $v_\eps \equiv 0$ inside this interval. Outside of $]-x_*^\eps, x_*^\eps[$, it suffices to consider the interval $[x_*^\eps, +\infty[$ by symmetry. We use the fact that $(W_\eps)_{xxx}$ and $\bar\rho^\eps$ vanish to see that
\begin{align} \label{eq:veps_xx_outside}
    \eps\, v_\eps = -6\, (W_\eps)_x (W_\eps)_{xx} + \lambda x.
\end{align}
Since $W_\eps$ is equal to a quadratic polynomial in $[x_*^\eps, +\infty[$, the expression \eqref{eq:veps_xx_outside} is affine in this interval. More precisely, we have for $x_*^\eps < x < +\infty$:
\begin{align*}
    |(v_\eps)_x(x)| = \frac{1}{\eps} \left|-6\, (W_\eps)_{xx}(x_*^\eps)^2 + \lambda\right| =: a_\eps.
\end{align*}
Applying Lemma \ref{prop:min_xx_bounds} yields $a_\eps\leq \frac{K \eps}{\eps} = K$, finishing the proof.
\end{proof}
With this result, we now give a proof for the first key functional inequality \eqref{eq:funcineq_veps}.
\begin{lemma}
    Inequality \eqref{eq:funcineq_veps} holds with $\kappa_1 = \left(\frac{K}{\tilde\lambda_\eps}\right)^2$ with $\tilde \lambda_\eps$ from Lemma \ref{prop:min_xx_bounds}.
\end{lemma}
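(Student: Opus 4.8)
\emph{Proof proposal.} The plan is to combine the explicit pointwise formula for $v_\eps$ from Lemma~\ref{lem:veps_x_formula} with a matching quadratic lower bound for the positive part $(W_\eps)_+$, and then feed this into the entropy dissipation inequality \eqref{eq:geodconv_funcineq} together with the representation \eqref{eq:ml_diff_rep} of $\ml_\eps(\rho)-\ml_\eps(\bar\rho^\eps)$. The point is that by Lemma~\ref{lem:veps_x_formula} we have $v_\eps(x)^2 = a_\eps^2\,(|x|-x_*^\eps)_+^2 \le K^2\,(|x|-x_*^\eps)_+^2$, so it suffices to control $\int_\R \rho\,(|x|-x_*^\eps)_+^2\,\intd x$ by the right-hand side of \eqref{eq:funcineq_veps}; this is precisely the kind of second-moment-type quantity that $\ml_\eps$ dominates.

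First I would establish the pointwise bound
\[
    (W_\eps)_+(x) \;\ge\; \frac{\tilde\lambda_\eps}{2}\,(|x|-x_*^\eps)_+^2 \qquad \text{for all } x \in \R.
\]
Inside $[-x_*^\eps,x_*^\eps]$ both sides vanish, since $W_\eps = -\sqrt{\bar\rho^\eps}\le 0$ there. Outside, by construction \eqref{eq:weps_constr_taylor} one has $W_\eps(x) = c_1^\eps(|x|-x_*^\eps) + \tfrac{c_2^\eps}{2}(|x|-x_*^\eps)^2$, so it remains to check $c_1^\eps \ge 0$ and $c_2^\eps \ge \tilde\lambda_\eps$. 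The first follows from radial monotonicity of $\bar\rho^\eps$ (Proposition~\ref{prop:min_ex_tf}): $\sqrt{\bar\rho^\eps}$ is nonincreasing on $[0,x_*^\eps]$, hence $c_1^\eps = -(\sqrt{\bar\rho^\eps})_x((x_*^\eps)^-)\ge 0$ (alternatively, $(W_\eps)_x$ is nondecreasing by $\tilde\lambda_\eps$-convexity and $(W_\eps)_x(0)=0$ by evenness, so $c_1^\eps = (W_\eps)_x(x_*^\eps)\ge 0$). The second follows from Lemma~\ref{prop:min_xx_bounds}: since $-(\sqrt{\bar\rho^\eps})_{xx}\ge \tilde\lambda_\eps$ on $]-x_*^\eps,x_*^\eps[$ and $(\sqrt{\bar\rho^\eps})_{xx}$ extends continuously to the endpoint by the $W^{3,\infty}$-bound, $c_2^\eps = \lim_{x\uparrow x_*^\eps} -(\sqrt{\bar\rho^\eps})_{xx}(x) \ge \tilde\lambda_\eps$.

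Then I would simply chain the estimates. If $\int_\R \rho\,(\sqrt\rho+W_\eps)_x^2\,\intd x = +\infty$ there is nothing to prove, so assume it is finite. Using $a_\eps\le K$, the bound just proven, the representation \eqref{eq:ml_diff_rep} (which gives $\ml_\eps(\rho)-\ml_\eps(\bar\rho^\eps)\ge \int_\R \rho\,(W_\eps)_+\,\intd x$ since the Bregman term is nonnegative), and finally \eqref{eq:geodconv_funcineq}:
\[
    \int_\R \rho\, v_\eps^2\,\intd x \;\le\; K^2 \int_\R \rho\,(|x|-x_*^\eps)_+^2\,\intd x \;\le\; \frac{2K^2}{\tilde\lambda_\eps}\int_\R \rho\,(W_\eps)_+\,\intd x \;\le\; \frac{2K^2}{\tilde\lambda_\eps}\bigl(\ml_\eps(\rho)-\ml_\eps(\bar\rho^\eps)\bigr) \;\le\; \frac{K^2}{\tilde\lambda_\eps^2}\int_\R \rho\,(\sqrt\rho+W_\eps)_x^2\,\intd x,
\]
which is exactly \eqref{eq:funcineq_veps} with $\kappa_1 = (K/\tilde\lambda_\eps)^2$.

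I do not expect a genuine obstacle here: the only points requiring a little care are the sign of $c_1^\eps$ and the lower bound on $c_2^\eps$ (both immediate from material already proven) and the bookkeeping that all the functionals involved are well-defined elements of $[0,+\infty]$ so that the chain of inequalities is legitimate even in the degenerate cases. The substance of the lemma is really that the slope $a_\eps$ of $v_\eps$ outside the support and the convexity modulus $\tilde\lambda_\eps$ of $W_\eps$ outside the support are controlled by the \emph{same} constant $K$, which is the content of Lemmas~\ref{prop:min_xx_bounds} and \ref{lem:veps_x_formula}.
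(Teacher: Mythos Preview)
Your proposal is correct and follows essentially the same route as the paper: bound $v_\eps^2$ by $K^2(|x|-x_*^\eps)_+^2$ via Lemma~\ref{lem:veps_x_formula}, then dominate $(|x|-x_*^\eps)_+^2$ by $\frac{2}{\tilde\lambda_\eps}(W_\eps)_+$, and conclude by \eqref{eq:ml_diff_rep} and \eqref{eq:geodconv_funcineq}. The paper phrases the middle step as ``a comparison of the second derivatives'' while you spell out the verification $c_1^\eps\ge 0$, $c_2^\eps\ge\tilde\lambda_\eps$ explicitly, but this is the same argument.
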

\begin{proof}
Applying Lemma \ref{lem:veps_x_formula} yields
\begin{align*}
    \int_\R \rho\, v_\eps^2\,\intd x \leq K^2 \int_\R \rho\,(|x| - x_*^\eps)_+^2\,\intd x.
\end{align*}
A comparison of the second derivatives and the fact that both $(|x| - x_*^\eps)_+^2$ and $(W_\eps)_+$ vanish inside $[-x_*^\eps, x_*^\eps]$ yield the estimate
\begin{align*}
    (|x| - x_*^\eps)_+^2 \leq \frac{2}{\tilde\lambda_\eps}(W_\eps)_+.
\end{align*}
Inserting above and applying \eqref{eq:ml_diff_rep} and \eqref{eq:geodconv_funcineq}, we obtain
\begin{align*}
    \int_\R \rho\, v_\eps^2\,\intd x \leq \frac{2K^2}{\tilde\lambda_\eps} \left(\ml_\eps(\rho) - \ml_\eps(\bar\rho^\eps)\right) \leq \frac{K^2}{\tilde\lambda_\eps^2} \int_\R \rho\,\left(\sqrt{\rho} + W_\eps\right)_x^2\,\intd x,
\end{align*}
finishing the proof.
\end{proof}

Our proof of the second functional inequality \eqref{eq:funcineq_wepsxxx} will rely on the fact that $(W_\eps)_{xxx}$ is constantly 0 outside the interval $]-x_*^\eps, x_*^\eps[$ by construction, while inside of it, it is of order $\eps$ by Lemma \ref{prop:min_xx_bounds}. These two observations directly yield the following estimate:
\begin{align*}
\int_\R \rho\,\frac{\left|(W_\eps)_{xxx}\right|}{\eps}\left(\sqrt{\rho} + W_\eps\right)^2\,\intd x \leq M \int_{-x_*^\eps}^{x_*^\eps} \rho\,(\sqrt{\rho} + W_\eps)^2\,\intd x.
\end{align*}
In order to prove \eqref{eq:funcineq_wepsxxx}, we thus need an estimate of the form
\begin{equation} \label{eq:fineq_2_simple}
    \int_{-x_*^\eps}^{x_*^\eps} \rho\,(\sqrt{\rho} + W_\eps)^2\,\intd x \leq \hat\kappa_2 \int_\R \rho\,\left(\sqrt{\rho} + W_\eps\right)_x^2\,\intd x
\end{equation}
with a constant $\hat\kappa_2$, as then \eqref{eq:funcineq_wepsxxx} will follow with $\kappa_2 = M\hat\kappa_2$. We start by proving a weaker version of \eqref{eq:fineq_2_simple}, in which the constant $\hat\kappa_2$ depends on the maximum value of $\rho$.
\begin{lemma} \label{lem:fineq_2_bound_srho}
    Let $H > 0$ be an arbitrary constant. It holds for every $\rho \in \mpt(\R) \cap H^1(\R)$:
    \begin{align*}
        \int_{[-x_*^\eps, x_*^\eps]\, \cap\, \{\rho \leq H^2\}} \rho\,(\sqrt{\rho} + W_\eps)^2\,\intd x \leq \frac{3 H}{4 \tilde\lambda_\eps} \int_\R \rho\,\left(\sqrt{\rho} + W_\eps\right)_x^2\,\intd x.
    \end{align*}
    In particular, \eqref{eq:fineq_2_simple} holds with the constant $\hat\kappa_2 = \frac{3H}{4\tilde\lambda_\eps}$ on the subset of densities $\rho$ that satisfy $\rho \leq H^2$ everywhere in $[-x_*^\eps, x_*^\eps]$.
\end{lemma}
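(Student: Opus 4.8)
The strategy is to peel off the factor $H$ first and then reduce the remaining estimate to the two identities already at our disposal, the representation \eqref{eq:ml_diff_rep} of $\ml_\eps(\rho)-\ml_\eps(\bar\rho^\eps)$ and the dissipation inequality \eqref{eq:geodconv_funcineq}. On the set $\{\rho\leq H^2\}$ one has $\rho=(\sqrt\rho)^2\leq H\sqrt\rho$, so that, dropping the intersection with $\{\rho\leq H^2\}$ afterwards because the integrand is non-negative,
\begin{align*}
    \int_{[-x_*^\eps,x_*^\eps]\cap\{\rho\leq H^2\}}\rho\,(\sqrt\rho+W_\eps)^2\,\intd x\leq H\int_{-x_*^\eps}^{x_*^\eps}\sqrt\rho\,(\sqrt\rho+W_\eps)^2\,\intd x.
\end{align*}
It thus suffices to show $\int_{-x_*^\eps}^{x_*^\eps}\sqrt\rho\,(\sqrt\rho+W_\eps)^2\,\intd x\leq\frac{3}{4\tilde\lambda_\eps}\int_\R\rho\,(\sqrt\rho+W_\eps)_x^2\,\intd x$.

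The key point is an exact factorization of the Bregman divergence of $f(r)=\frac23 r^{3/2}$: substituting $r=s^2$, $\bar r=t^2$ and expanding in $h=s-t$ gives $\dst_f(r|\bar r)=h^2(t+\frac23 h)$, i.e.
\begin{align*}
    \dst_f(r|\bar r)=\frac{2\sqrt r+\sqrt{\bar r}}{3}\,(\sqrt r-\sqrt{\bar r})^2\geq\frac23\sqrt r\,(\sqrt r-\sqrt{\bar r})^2.
\end{align*}
Since $W_\eps=-\sqrt{\bar\rho^\eps}$ on $[-x_*^\eps,x_*^\eps]$, we have $\sqrt\rho+W_\eps=\sqrt\rho-\sqrt{\bar\rho^\eps}$ there, so the bound above with $r=\rho$, $\bar r=\bar\rho^\eps$ yields the pointwise estimate $\dst_f(\rho|\bar\rho^\eps)\geq\frac23\sqrt\rho\,(\sqrt\rho+W_\eps)^2$ on that interval.

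Integrating this over $]-x_*^\eps,x_*^\eps[$, enlarging the integration domain to $\R$ (legitimate since $\dst_f\geq0$), discarding the non-negative term $\int_\R\rho\,(W_\eps)_+\,\intd x$ in \eqref{eq:ml_diff_rep}, and applying \eqref{eq:geodconv_funcineq} gives
\begin{align*}
    \frac23\int_{-x_*^\eps}^{x_*^\eps}\sqrt\rho\,(\sqrt\rho+W_\eps)^2\,\intd x\leq\int_\R\dst_f(\rho|\bar\rho^\eps)\,\intd x\leq\ml_\eps(\rho)-\ml_\eps(\bar\rho^\eps)\leq\frac{1}{2\tilde\lambda_\eps}\int_\R\rho\,(\sqrt\rho+W_\eps)_x^2\,\intd x,
\end{align*}
which rearranges exactly to the required inequality and, combined with the first step, proves the displayed bound. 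The ``in particular'' assertion is then immediate: if $\rho\leq H^2$ on all of $[-x_*^\eps,x_*^\eps]$ the left-hand side coincides with the one appearing in \eqref{eq:fineq_2_simple}, so we may take $\hat\kappa_2=\frac{3H}{4\tilde\lambda_\eps}$.

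The only non-routine ingredient is recognizing the sharp identity for $\dst_f$: a crude second-order Taylor bound $\dst_f(r|\bar r)=\frac14\xi^{-1/2}(r-\bar r)^2$ with $\xi$ between $r$ and $\bar r$ only produces the weaker constant $\frac{2H}{\tilde\lambda_\eps}$, so the precise factorization is what pins down the stated $\frac{3H}{4\tilde\lambda_\eps}$. Everything else is a short chain of already-proven facts, and the computation of $\dst_f$ in terms of square roots is elementary.
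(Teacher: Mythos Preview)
Your proof is correct and follows essentially the same route as the paper: peel off the factor $H$ via $\rho\le H\sqrt\rho$, establish the pointwise inequality $\sqrt r\,(\sqrt r-\sqrt{\bar r})^2\le\frac32\,\dst_f(r|\bar r)$, and then invoke \eqref{eq:ml_diff_rep} and \eqref{eq:geodconv_funcineq}. The only cosmetic difference is that you obtain the pointwise bound from the exact factorization $\dst_f(r|\bar r)=\frac{2\sqrt r+\sqrt{\bar r}}{3}(\sqrt r-\sqrt{\bar r})^2$, whereas the paper computes the difference directly and shows it equals $\frac12\bar r^{3/2}(1-\sqrt{r/\bar r})^2$; these are the same identity viewed from two sides.
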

\begin{proof}
    On $\{\rho \leq H^2\}$, it holds $\rho \leq H \sqrt{\rho}$, thus we can estimate the left-hand side:
    \begin{align*}
        &\int_{[-x_*^\eps, x_*^\eps]\, \cap\, \{\rho \leq H^2\}} \rho\,(\sqrt{\rho} + W_\eps)^2\,\intd x \leq H \int_{-x_*^\eps}^{x_*^\eps} \sqrt{\rho}\,(\sqrt{\rho} + W_\eps)^2\,\intd x \\
        &= H \int_{-x_*^\eps}^{x_*^\eps} \sqrt{\rho}\,(\sqrt{\rho} - \sqrt{\bar\rho^\eps})^2\,\intd x = H \int_{-x_*^\eps}^{x_*^\eps} \left[\rho^{3/2} + \bar\rho^\eps \sqrt{\rho} - 2 \sqrt{\bar\rho^\eps}\rho\right]\,\intd x.
    \end{align*}
    We now claim that for all real numbers $r, \bar r \geq 0$, it holds
    \begin{align} \label{eq:df_elem_ineq}
        r^{3/2} + \bar r\, \sqrt{r}-2 \sqrt{\bar r}\,r \leq \frac{3}{2} \left(\frac{2}{3} r^{3/2} - \frac{2}{3} \bar r^{3/2} - \sqrt{\bar r}\,(r - \bar r)\right) = \frac{3}{2} \dst_f(r|\bar r).
    \end{align}
    For $\bar r = 0$, this is clear. For $\bar r > 0$, we have
    \begin{align*}
        &\frac{3}{2} \left(\frac{2}{3} r^{3/2} - \frac{2}{3} \bar r^{3/2} - \sqrt{\bar r}\,(r - \bar r)\right) - \left(r^{3/2} + \bar r\, \sqrt{r}-2 \sqrt{\bar r}\,r\right) \\
        & = - \bar r^{3/2} - \frac{3}{2} \sqrt{\bar r}\,(r - \bar r) - \bar r\, \sqrt{r} + 2 \sqrt{\bar r}\,r = \frac{1}{2} \bar r^{3/2} - \bar r\,\sqrt{r} + \frac{1}{2}\sqrt{\bar r}\,r = \frac{1}{2}\bar r^{3/2}\left(1 - \sqrt{\frac{r}{\bar r}}\right)^2 \geq 0,
    \end{align*}
    proving \eqref{eq:df_elem_ineq}. This gives a pointwise estimate on the integrand above. Inserting and applying \eqref{eq:ml_diff_rep} and \eqref{eq:geodconv_funcineq} yields
    \begin{align*}
        &H \int_{-x_*^\eps}^{x_*^\eps} \left[\rho^{3/2} + \bar\rho^\eps \sqrt{\rho} - 2 \sqrt{\bar\rho^\eps}\rho\right]\,\intd x \leq \frac{3H}{2}\int_{-x_*^\eps}^{x_*^\eps} \dst_f(r|\bar r)\,\intd x \\
        &\leq \frac{3H}{2} \left(\ml_\eps(\rho) - \ml_\eps(\bar\rho^\eps)\right) \leq \frac{3H}{4\tilde\lambda_\eps} \int_\R \rho\,\left(\sqrt{\rho} + W_\eps\right)_x^2\,\intd x,
    \end{align*}
    finishing the proof.
\end{proof}
In order to obtain an estimate that holds for every $\rho$ with a uniform constant, we need to control the integral on the left-hand side of \eqref{eq:fineq_2_simple} on the set $\{\rho \geq H^2\}$. We start by proving a uniform lower bound on the right-hand side of \eqref{eq:fineq_2_simple} under the condition that this set is non-empty. We recall that by \eqref{eq:x_eps_bound}, $x_*^\eps$ is uniformly bounded away from $0$ for small $\eps$.
\begin{lemma} \label{lem:largerho_lowerbd}
    Let $H > 0$ be large enough such that $H \geq 4\, \|\bar\rho^\eps\|_{L^\infty}^{1/2}$ and $H \geq 2/\sqrt{x_*^\eps}$ for every sufficiently small $\eps > 0$. Then it holds
    \begin{align} \label{eq:largerho_lowerbd}
        \int_\R \rho\,(\sqrt{\rho} + W_\eps)_x^2\,\intd x \geq \frac{H^6}{256}
    \end{align}
    for every $\rho \in \mpt(\R) \cap H^1(\R)$ with $\{\rho \geq H^2\} \,\cap\, ]-x_*^\eps, x_*^\eps[\, \neq \emptyset$.
\end{lemma}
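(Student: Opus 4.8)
My plan is to localize near a point where $\rho$ is large. On a correspondingly short interval the density must drop from a value $\geq H^2$ down to $H^2/4$, and this large oscillation of $\sqrt\rho$ alone already forces the integral to be of order $H^6$; the potential $W_\eps$ contributes only a perturbation that is of lower order in $H^{-1}$, because it is uniformly Lipschitz on the relevant region.

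First I would fix a point $x_1 \in \,]-x_*^\eps, x_*^\eps[$ with $\rho(x_1) \geq H^2$, which exists by hypothesis, and let $I = \,]x_2, x_3[$ be the connected component of the open set $\{\rho > H^2/4\}$ containing $x_1$. Since $\rho$ is a probability density, $\frac{H^2}{4}\,|I| \leq \int_I \rho \leq 1$, so $|I| \leq 4/H^2$; in particular $I$ is a bounded interval, and by continuity of $\rho$ and maximality of $I$ one has $\rho(x_2) = \rho(x_3) = H^2/4$. Combining $|I| \leq 4/H^2 \leq x_*^\eps$ (which uses $H \geq 2/\sqrt{x_*^\eps}$) with $x_1 \in \,]-x_*^\eps, x_*^\eps[$ shows $I \subset \,]-2x_*^\eps, 2x_*^\eps[$, so we stay in a region where $W_\eps$ is controlled.

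Next I would establish two bounds on $[x_2, x_1]$. On the one hand, $\sqrt\rho$ is bounded away from $0$ on $I$, hence absolutely continuous there, and runs from $\sqrt{\rho(x_1)} \geq H$ down to $\sqrt{\rho(x_2)} = H/2$; by the fundamental theorem of calculus $\int_{x_2}^{x_1} |(\sqrt\rho)_x|\,\intd x \geq H - H/2 = H/2$. On the other hand, $(W_\eps)_x$ is bounded on $]-2x_*^\eps, 2x_*^\eps[$ by a constant $C_W$ independent of (small) $\eps$: on $]-x_*^\eps, x_*^\eps[$ this is the uniform $W^{1,\infty}$-bound on $\sqrt{\bar\rho^\eps}$ established in Lemmas \ref{lem:min_explicit} and \ref{prop:min_xx_bounds}, while on $[x_*^\eps, 2x_*^\eps]$ and $[-2x_*^\eps, -x_*^\eps]$ one uses the explicit affine form of $(W_\eps)_x$ from \eqref{eq:weps_constr_taylor} together with the uniform bounds on $c_1^\eps$, $c_2^\eps$ and $x_*^\eps$ coming from \eqref{eq:weps_derivs_xstar}, Lemma \ref{prop:min_xx_bounds} and \eqref{eq:x_eps_bound}. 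Thus $\int_{x_2}^{x_1}|(W_\eps)_x|\,\intd x \leq C_W\,|I| \leq 4C_W/H^2$, which is at most $H/4$ once $H$ is taken large enough, and the triangle inequality gives $\int_{x_2}^{x_1}|(\sqrt\rho + W_\eps)_x|\,\intd x \geq H/2 - H/4 = H/4$.

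Finally I would invoke $\rho \geq H^2/4$ on $[x_2, x_1]$, $x_1 - x_2 \leq 4/H^2$, and the Cauchy--Schwarz inequality to obtain
\[
\int_\R \rho\,(\sqrt\rho + W_\eps)_x^2\,\intd x \;\geq\; \frac{H^2}{4}\int_{x_2}^{x_1}(\sqrt\rho + W_\eps)_x^2\,\intd x \;\geq\; \frac{H^2}{4}\cdot\frac{1}{x_1 - x_2}\Big(\int_{x_2}^{x_1}|(\sqrt\rho + W_\eps)_x|\,\intd x\Big)^2 \;\geq\; \frac{H^2}{4}\cdot\frac{H^2}{4}\cdot\frac{H^2}{16} \;=\; \frac{H^6}{256}.
\]
The only genuinely delicate point is the possible cancellation between $(\sqrt\rho)_x$ and $(W_\eps)_x$ inside the integrand; this is exactly what the length bound $|I| \leq 4/H^2$ resolves, since on such a short interval the swing of $\sqrt\rho$ by at least $H/2$ dominates the $\eps$-uniformly Lipschitz contribution of $W_\eps$.
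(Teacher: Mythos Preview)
Your argument is essentially the same as the paper's: localize to a short subinterval on which $\rho$ drops from at least $H^2$ to $H^2/4$, use $\rho\geq H^2/4$ there, bound the length by $4/H^2$, and apply Cauchy--Schwarz. The only substantive difference is how you handle the $W_\eps$ contribution, and there your version is slightly weaker than the lemma as stated.

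The paper, after picking a point where $\rho\geq H^2$, assumes without loss of generality that this point lies in $[-x_*^\eps,0]$ and then moves to the \emph{right} until $\rho$ hits $H^2/4$; combined with the length bound $4/H^2\leq x_*^\eps$ this guarantees the whole subinterval stays inside $[-x_*^\eps,x_*^\eps]$. On that set $W_\eps=-\sqrt{\bar\rho^\eps}$, so one can evaluate $\int(\sqrt\rho+W_\eps)_x$ directly as an endpoint difference and control the $W_\eps$ part by $\sqrt{\bar\rho^\eps}\leq H/4$, using precisely the stated hypothesis $H\geq 4\|\bar\rho^\eps\|_{L^\infty}^{1/2}$. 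Your version only places $I\subset\,]-2x_*^\eps,2x_*^\eps[$ and instead bounds $\int_{x_2}^{x_1}|(W_\eps)_x|\leq 4C_W/H^2$; to make this $\leq H/4$ you invoke ``once $H$ is taken large enough'', i.e.\ an extra condition $H^3\geq 16C_W$ not in the lemma's hypotheses. This is harmless for the downstream application (one is free to enlarge $H$), but it means you have not proved the lemma exactly as stated. Adopting the paper's trick of orienting the subinterval toward the origin removes the need for the Lipschitz bound on $W_\eps$ and the extra assumption.
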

\begin{proof}
    By assumption, there exists $x_0 \in ]-x_*^\eps, x_*^\eps[$ such that $\rho(x_0) \geq H^2$. Without loss of generality, we assume $\rho(x_0) = H^2$ and $x_0 \leq 0$. By continuity of $\rho$, there exists some $x_1 > x_0$ with $\rho(x_1) = H^2/4$ and $\rho \geq H^2/4$ in the interval $[x_0, x_1]$. Since $\int \rho = 1$, it holds 
    \begin{align*}
        x_1 \leq x_0 + \frac{4}{H^2} \leq x_0 + x_*^\eps \leq x_*^\eps,
    \end{align*}
    where we have used that $4/H^2 \leq x_*^\eps$ by assumption. It now follows
    \begin{align*}
        &\int_\R \rho\,(\sqrt{\rho} + W_\eps)_x^2\,\intd x \geq \int_{x_0}^{x_1} \rho\,(\sqrt{\rho} + W_\eps)_x^2\,\intd x \geq \frac{H^2}{4} \int_{x_0}^{x_1} (\sqrt{\rho} + W_\eps)_x^2\,\intd x \\
        &\geq \frac{H^2}{4(x_1 - x_0)} \left( \int_{x_0}^{x_1} (\sqrt{\rho} + W_\eps)_x\,\intd x \right)^2 = \frac{H^4}{16} \left(\frac{H}{2} - H + W_\eps(x_1) - W_\eps(x_0)\right)^2 \\
        &= \frac{H^4}{16}\left(-\frac{H}{2} - \sqrt{\bar\rho^\eps(x_1)} + \sqrt{\bar\rho^\eps(x_0)}\right)^2 \geq \frac{H^4}{16}\left(\frac{H}{4}\right)^2 = \frac{H^6}{256},
    \end{align*}
    where the last estimate uses that $\sqrt{\bar\rho^\eps(x_0)} \leq H/4$ by assumption.
\end{proof}
We now prove the remaining estimate required to prove \eqref{eq:fineq_2_simple}: 
\begin{lemma} \label{lem:fineq_2_bound_lrho}
    Assume that $H > 0$ satisfies the assumptions from the previous lemma. Then it holds for every $\rho \in \mpt(\R) \cap H^1(\R)$
    \begin{align*}
        \int_{[-x_*^\eps, x_*^\eps]\, \cap\, \{\rho \geq H^2\}} \rho\,(\sqrt{\rho} + W_\eps)^2\,\intd x \leq \frac{16\, (x_*^\eps)^2}{\pi^2}\left(8 + \frac{2048 \,(c_1^\eps)^2}{H^6}\right)\int_{-x_*^\eps}^{x_*^\eps} \rho\,(\sqrt{\rho} + W_\eps)_x^2\,\intd x,
    \end{align*}
    where $c_1^\eps = (W_\eps)_x(x_*^\eps)$ is the constant from \eqref{eq:weps_derivs_xstar}.
\end{lemma}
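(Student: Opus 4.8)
Set $g := \sqrt\rho + W_\eps$; on $[-x_*^\eps, x_*^\eps]$ this coincides with $\sqrt\rho - \sqrt{\bar\rho^\eps}$. Write $I := \int_{-x_*^\eps}^{x_*^\eps}\rho\,g_x^2\,\intd x$ and $\Omega := [-x_*^\eps, x_*^\eps]\cap\{\rho\ge H^2\}$. If $|\Omega| = 0$ the asserted inequality is trivial, so we may assume $\Omega^\circ := \{\rho\ge H^2\}\cap\,]-x_*^\eps, x_*^\eps[$ is nonempty; the proof of Lemma \ref{lem:largerho_lowerbd} then gives $I\ge H^6/256$ (its lower bound is in fact produced by integrating over a subinterval of $]-x_*^\eps, x_*^\eps[$), and this estimate is what will let us convert additive constants into multiples of $I$.

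I would build the argument on two facts. (a) The function $g$ vanishes somewhere in $]-x_*^\eps, x_*^\eps[$: on $\Omega^\circ$ we have $\rho\ge H^2 > \|\bar\rho^\eps\|_{L^\infty}$ by the standing hypothesis $H\ge 4\|\bar\rho^\eps\|_{L^\infty}^{1/2}$, hence $g>0$ there, while $\int_{-x_*^\eps}^{x_*^\eps}(\rho-\bar\rho^\eps)\,\intd x \le \int_\R\rho\,\intd x - \int_{-x_*^\eps}^{x_*^\eps}\bar\rho^\eps\,\intd x = 0$ prevents $g$ from being positive throughout the open interval; the intermediate value theorem produces $x_m\in\,]-x_*^\eps, x_*^\eps[$ with $g(x_m)=0$, so $\rho(x_m)=\bar\rho^\eps(x_m)<H^2$. (b) By Lemma \ref{prop:min_xx_bounds}, $(\sqrt{\bar\rho^\eps})_{xx}\le-\tilde\lambda_\eps<0$, so $\sqrt{\bar\rho^\eps}$ is concave on $]-x_*^\eps, x_*^\eps[$ and vanishes at $\pm x_*^\eps$ with one-sided slopes of modulus $c_1^\eps$; comparison with the endpoint tangent lines gives $0\le\sqrt{\bar\rho^\eps}(x)\le c_1^\eps\,(x_*^\eps-|x|)$ on $[-x_*^\eps, x_*^\eps]$. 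Fact (b) is the source of the constant $c_1^\eps$: it controls $g$ near the free boundary once the value of $\rho$ at $\pm x_*^\eps$ is itself under control.

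The estimate then reduces to a pointwise bound for $g$ on $\Omega$, since $\int_\Omega\rho\,\intd x\le1$ gives $\int_\Omega\rho\,g^2\,\intd x\le\|g\|_{L^\infty(\Omega)}^2$. For $x^*\in\Omega$: because $\rho(x_m)<H^2\le\rho(x^*)$, there is a crossing point $y^*$ strictly between $x_m$ and $x^*$ with $\rho(y^*)=H^2$ and $\rho>H^2$ on the subinterval joining $y^*$ to $x^*$; on that subinterval the total-mass bound forces length $\le H^{-2}$ and $\int g_x^2\le H^{-2}\int\rho\,g_x^2\le H^{-2}I$, while $|g(y^*)|=|H-\sqrt{\bar\rho^\eps(y^*)}|\le H$. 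Cauchy--Schwarz then yields $|g(x^*)|\le H+H^{-2}I^{1/2}$, hence $\|g\|_{L^\infty(\Omega)}^2\le 2H^2+2H^{-4}I$, and using $I\ge H^6/256$ to dominate $2H^2$ by a multiple of $H^{-4}I$ gives $\int_\Omega\rho\,g^2\,\intd x\le C\,H^{-4}I$. To reach the precise constant $\frac{16(x_*^\eps)^2}{\pi^2}\big(8+2048(c_1^\eps)^2H^{-6}\big)$ of the statement, I would organize the $L^\infty$ step instead through the Poincar\'e--Wirtinger inequality $\int_{-x_*^\eps}^{x_*^\eps}u^2\,\intd x\le\frac{16(x_*^\eps)^2}{\pi^2}\int_{-x_*^\eps}^{x_*^\eps}u_x^2\,\intd x$ (available because $g$ has a zero in $[-x_*^\eps, x_*^\eps]$) applied to $g$ minus its affine interpolant $\ell$ at $\pm x_*^\eps$, and estimate the boundary contribution of $\ell$ — whose endpoint values are $\sqrt{\rho(\pm x_*^\eps)}$ — by combining fact (b) with a short-interval integration near $\pm x_*^\eps$, in the spirit of Lemma \ref{lem:largerho_lowerbd}, showing that a large value of $\rho$ at the boundary forces $I$ to be correspondingly large. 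Together with Lemma \ref{lem:fineq_2_bound_srho} on $\{\rho\le H^2\}$, this yields \eqref{eq:fineq_2_simple}.

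The step I expect to be the real obstacle is exactly the behavior near the free boundary $\pm x_*^\eps$: there the weight $\rho$ degenerates, $\sqrt\rho$ need not lie in $H^1$, and a priori $\rho(\pm x_*^\eps)$ is unbounded, so one cannot simply invoke a weighted Poincar\'e inequality on $[-x_*^\eps, x_*^\eps]$. The remedy is to route every point of $\Omega$ back to the interior zero $x_m$ of $g$ along a region on which $\rho$ stays above $H^2$ (so that $\rho\,g_x^2$ controls $g_x^2$ there), and to quantify, using the concavity of $\sqrt{\bar\rho^\eps}$, how an excursion of $\rho$ to a large value near $\pm x_*^\eps$ must be paid for in $I$; the remaining manipulations are routine constant bookkeeping.
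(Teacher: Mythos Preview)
Your first argument --- the $L^\infty$ bound on $g=\sqrt\rho+W_\eps$ via a crossing point $y^*$ and Cauchy--Schwarz on the high-density segment --- is correct and yields the inequality with a constant of order $H^{-4}$, which is all the application needs. This is a genuinely different route from the paper's.

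The paper does not work with $g$ at the Poincar\'e level at all. Its key observation is that on $\Omega$ one has $|W_\eps|=\sqrt{\bar\rho^\eps}\le H/4\le\sqrt\rho$, so $0\le g\le\sqrt\rho$ and hence $g^2\le\rho$ pointwise; the left-hand side is therefore dominated by $\int_\Omega\rho^2$. Since $H^2\ge 1/x_*^\eps$ exceeds twice the mean of $\rho$ over $[-x_*^\eps,x_*^\eps]$, on $\Omega$ one has $\rho/2\le\rho-\overline\rho$, and now Wirtinger applies to $\rho-\overline\rho\in H^1$, giving $\int(\rho-\overline\rho)^2\le\frac{4(x_*^\eps)^2}{\pi^2}\int\rho_x^2$. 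The identity $\rho_x^2=4\rho(\sqrt\rho)_x^2$ brings back the weight, and splitting $(\sqrt\rho)_x=g_x-(W_\eps)_x$ produces $8I+8(c_1^\eps)^2$, the additive constant then being absorbed via Lemma~\ref{lem:largerho_lowerbd}. This is where the precise constant $\frac{16(x_*^\eps)^2}{\pi^2}\bigl(8+2048(c_1^\eps)^2 H^{-6}\bigr)$ comes from.

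Your second approach --- Poincar\'e--Wirtinger applied to $g$ or $g-\ell$ --- does not recover the stated constant, and more importantly it runs into the very obstacle you flag without resolving it: $g_x=(\sqrt\rho)_x+(W_\eps)_x$ need not lie in $L^2(-x_*^\eps,x_*^\eps)$ (only $\rho\,g_x^2$ is controlled), so an unweighted Poincar\'e step on $g$ is not justified. The paper's decision to pass to $\rho$ before invoking Wirtinger is exactly what sidesteps this, since $\rho\in H^1(\R)$ guarantees $\rho_x\in L^2$; the weight is reinstated only afterwards through $\rho_x^2=4\rho(\sqrt\rho)_x^2$. Your crossing-point argument also sidesteps it, in a different way, by using the weighted integral only on the high-density segment where $\rho\ge H^2$ furnishes the missing lower bound on the weight.
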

\begin{proof}
    The left-hand side is zero in the case where $\rho \leq H^2$ everywhere inside $]-x_*^\eps, x_*^\eps[$, hence we may assume that $\{\rho \geq H^2\} \,\cap\, ]-x_*^\eps, x_*^\eps[\, \neq \emptyset$. For all $x \in ]-x_*^\eps, x_*^\eps[$ with $\rho(x) \geq H^2$, it now holds
    \begin{align*}
        |W_\eps(x)| = \sqrt{\bar\rho^\eps(x)} \leq \|\bar\rho^\eps\|_{L^\infty}^{1/2} \leq \frac{H}{4} \leq \frac{\sqrt{\rho(x)}}{4} \leq 2\sqrt{\rho(x)},
    \end{align*}
    implying that $(\sqrt{\rho(x)} + W_\eps(x))^2 \leq \sqrt{\rho(x)}^2 = \rho(x)$. Thus
    \begin{align*}
        &\int_{[-x_*^\eps, x_*^\eps]\, \cap\, \{\rho \geq H^2\}} \rho\,(\sqrt{\rho} + W_\eps)^2\,\intd x \leq \int_{[-x_*^\eps, x_*^\eps]\, \cap\, \{\rho \geq H^2\}} \rho^2\,\intd x.
    \end{align*}
    Using that $H^2 \geq 1/x_*^\eps$ by assumption and $\int \rho = 1$, we observe that on $\{\rho \geq H^2\}$, it holds
    \begin{align*}
       \frac{\rho}{2} \leq \rho - \frac{H^2}{2} \leq \rho - \frac{1}{2x_*^\eps}\int_{-x_*^\eps}^{x_*^\eps} \rho\,\intd x.
    \end{align*}
    Inserting above and applying Wirtinger's inequality yields
    \begin{align*}
        &\int_{[-x_*^\eps, x_*^\eps]\, \cap\, \{\rho \geq H^2\}} \rho^2\,\intd x \leq 4 \int_{[-x_*^\eps, x_*^\eps]\, \cap\, \{\rho \geq H^2\}} \left(\rho - \frac{1}{2x_*^\eps}\int_{-x_*^\eps}^{x_*^\eps} \rho\,\intd y\right)^2\,\intd x \\
        &\leq 4 \int_{-x_*^\eps}^{x_*^\eps} \left(\rho - \frac{1}{2x_*^\eps}\int_{-x_*^\eps}^{x_*^\eps} \rho\,\intd y\right)^2\,\intd x \leq \frac{16\,(x_*^\eps)^2}{\pi^2} \int_{-x_*^\eps}^{x_*^\eps} \rho_x^2\,\intd x.
    \end{align*}
    Using the identity $\rho_x^2 = 4\rho\,(\sqrt{\rho})_x^2$, we obtain
    \begin{align*}
        &\int_{-x_*^\eps}^{x_*^\eps} \rho_x^2\,\intd x = 4\int_{-x_*^\eps}^{x_*^\eps} \rho\,(\sqrt{\rho})_x^2\,\intd x = 4 \int_{-x_*^\eps}^{x_*^\eps} \rho\,(\sqrt{\rho} + W_\eps - W_\eps)_x^2\,\intd x \\
        &\leq 8 \int_{-x_*^\eps}^{x_*^\eps} \rho\,(\sqrt{\rho} + W_\eps)_x^2\,\intd x + 8 \int_{-x_*^\eps}^{x_*^\eps} \rho\,(W_\eps)_x^2\,\intd x \leq 8 \int_{-x_*^\eps}^{x_*^\eps} \rho\,(\sqrt{\rho} + W_\eps)_x^2\,\intd x + 8 (c_1^\eps)^2 \\
        &\leq \left(8 + \frac{2048 \,(c_1^\eps)^2}{H^6}\right)\int_{-x_*^\eps}^{x_*^\eps} \rho\,(\sqrt{\rho} + W_\eps)_x^2\,\intd x,
    \end{align*}
    where the last step uses Lemma \ref{lem:largerho_lowerbd}. This proves the claim.
\end{proof}
Combining the results from Lemma \ref{lem:fineq_2_bound_srho} and Lemma \ref{lem:fineq_2_bound_lrho} finishes the proof of \eqref{eq:fineq_2_simple} by splitting the integral on the left-hand side into one part over $\{\rho \leq H^2\}$ and one over $\{\rho \geq H^2\}$. By the considerations above, this implies \eqref{eq:funcineq_wepsxxx}.

Finally, we prove inequality \eqref{eq:funcineq_h} by using the bounds on the higher derivatives of $h$ from Hypothesis \ref{hyp:hderiv}. Introducing the function
\begin{align*}
    g: \Rnn \to \R,\quad g(r) := 2\sqrt{r}\,h''(r),
\end{align*}
we observe that these assumptions imply $g(0) = 0$, and that $g$ is globally bounded and locally Lipschitz, as by \eqref{eq:hderbound_second} and \eqref{eq:hderiv_bound_third}, we have
\begin{align} \label{eq:g_estims}
    |g(r)| \leq A \quad \text{for all } r \geq 0, \qquad |g'(r)| = 2\,\left|\sqrt{r}\, h'''(r) + \frac{h''(r)}{2\sqrt{r}}\right| \leq \frac{\hat L_H + A}{2} \quad  \text{for } 0 < r \leq H.
\end{align}
Fixing $H_0 < +\infty$ as an $\eps$-uniform upper bound on $\|\bar\rho^\eps\|_{L^\infty}$ for small $\eps$ and denoting $L := (\hat L_{3H_0} + A)/2$, we see that $g$ is $L$-Lipschitz on $[0, 3H_0]$.

We now split the left-hand side of \eqref{eq:funcineq_h} in a particular way. Using $\rho_x = 2\sqrt{\rho}\, (\sqrt{\rho})_x$, we obtain the following:
\begin{align*}
    &\int_\R \rho\,\left(h'(\rho) - h'(\bar\rho_\eps)\right)_x^2\,\intd x \\
    &= \int_\R \rho\,\left(h''(\rho)\rho_x - h''(\bar\rho^\eps)\bar\rho^\eps_x \right)^2\,\intd x = \int_\R \rho\,\left(g(\rho)(\sqrt{\rho})_x - g(\bar\rho^\eps)(\sqrt{\bar\rho^\eps})_x\right)^2\,\intd x \\
    &= \int_\R \rho\,\left(g(\rho)(\sqrt{\rho} + W_\eps)_x - g(\rho)(W_\eps + \sqrt{\bar\rho^\eps})_x + \left(g(\rho) - g(\bar\rho^\eps)\right)(\sqrt{\bar\rho^\eps})_x\right)^2\,\intd x \\
    &\leq 3\left[\int_\R \rho\,g(\rho)^2(\sqrt{\rho} + W_\eps)_x^2\,\intd x + \int_\R \rho\,g(\rho)^2(W_\eps + \sqrt{\bar\rho^\eps})_x^2\,\intd x + \int_\R \rho\, (\sqrt{\bar\rho^\eps})_x^2 \left(g(\rho) - g(\bar\rho^\eps)\right)^2\,\intd x \right]\\
    &=: 3\,\left[\mi_1 + \mi_2 + \mi_3\right].
\end{align*}
We control each of the three integrals $\mi_{1,2,3}$ separately by the right-hand side of \eqref{eq:funcineq_h}.

For $\mi_1$, it follows immediately from \eqref{eq:g_estims} that
\begin{align} \label{eq:i1_estim}
    \mi_1 \leq A^2 \int_\R \rho\,(\sqrt{\rho} + W_\eps)_x^2\,\intd x.
\end{align}
For the last integral $\mi_3$, we prove the following technical lemma, which will also be used to estimate the second integral $\mi_2$.
\begin{lemma} \label{lem:g_df_estim}
    There exists a constant $\gamma < +\infty$ such that for all $r \geq 0$ and $0 \leq \bar r \leq H_0$, it holds
    \begin{equation} \label{eq:g_df_estim}
        r\,\left(g(r) - g(\bar r)\right)^2 \leq \gamma\, \dst_f(r|\bar r).
    \end{equation}
\end{lemma}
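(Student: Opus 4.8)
My plan is to prove \eqref{eq:g_df_estim} by splitting according to the size of $r$, treating a bounded regime and an unbounded regime with two different mechanisms, the threshold being $r = 9H_0$. The common quantitative ingredient is the lower bound on the Bregman divergence
\begin{align*}
    \dst_f(r|\bar r) \geq \frac{(r - \bar r)^2}{4\sqrt{\max\{r, \bar r\}}} \qquad \text{for all } r, \bar r \geq 0,
\end{align*}
which I would derive by writing $\dst_f(r|\bar r) = \int_{\bar r}^r (\sqrt{s} - \sqrt{\bar r})\,\intd s$ and using $|\sqrt{s} - \sqrt{\bar r}| = |s - \bar r|/(\sqrt{s} + \sqrt{\bar r}) \geq |s - \bar r|/(2\sqrt{\max\{r, \bar r\}})$ for $s$ in the interval with endpoints $\bar r$ and $r$, then integrating. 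I would also record the elementary rearrangement $\dst_f(r|\bar r) = \frac{2}{3}r^{3/2} + \frac{1}{3}\bar r^{3/2} - \sqrt{\bar r}\, r$, to be used in the unbounded regime.

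In the bounded regime $0 \leq r \leq 9H_0$ (recall $0 \leq \bar r \leq H_0$), the bound \eqref{eq:g_estims} shows that $g$ is Lipschitz on $[0, 9H_0]$ with some finite constant $L_0$, so
\begin{align*}
    r\,(g(r) - g(\bar r))^2 \leq L_0^2\, r\, (r - \bar r)^2 \leq 9 L_0^2 H_0\, (r - \bar r)^2.
\end{align*}
Since $\max\{r, \bar r\} \leq 9H_0$ here, the displayed lower bound on $\dst_f$ turns this into $r\,(g(r) - g(\bar r))^2 \leq 108\, L_0^2 H_0^{3/2}\, \dst_f(r|\bar r)$, which is \eqref{eq:g_df_estim} with a constant of that size.

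In the unbounded regime $r > 9H_0$, I would use only the global bound $|g| \leq A$ from \eqref{eq:g_estims}, giving $r\,(g(r) - g(\bar r))^2 \leq 4A^2\, r$, and exploit that $\dst_f$ now grows super-linearly: from $\dst_f(r|\bar r) \geq \frac{2}{3}r^{3/2} - \sqrt{H_0}\, r$ and $\sqrt{H_0} \leq \frac{1}{3}\sqrt{r}$ for $r \geq 9H_0$, one gets $\dst_f(r|\bar r) \geq \frac{1}{3}r^{3/2} \geq \sqrt{H_0}\, r$, hence $r\,(g(r) - g(\bar r))^2 \leq (4A^2/\sqrt{H_0})\, \dst_f(r|\bar r)$. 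Taking $\gamma := \max\{108\, L_0^2 H_0^{3/2},\ 4A^2/\sqrt{H_0}\}$ then finishes the proof. All verifications are elementary; the only point worth stressing is that a global Lipschitz bound for $g$ is not available — the constant in \eqref{eq:g_estims} degenerates as the interval grows — so the case distinction, with mere boundedness of $g$ exploited for large $r$, is essential rather than cosmetic.
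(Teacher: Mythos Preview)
Your proof is correct and follows essentially the same approach as the paper: split at a fixed multiple of $H_0$ (you use $9H_0$, the paper uses $3H_0$), combine the local Lipschitz bound on $g$ with the quadratic lower bound $\dst_f(r|\bar r) \gtrsim (r-\bar r)^2/\sqrt{\max\{r,\bar r\}}$ in the bounded regime, and use the global bound $|g|\le A$ together with the $r^{3/2}$ growth of $\dst_f$ in the unbounded regime. The only differences are cosmetic (choice of threshold and the way the quadratic lower bound on $\dst_f$ is derived), and your closing remark on why the case split is essential matches the paper's implicit reasoning.
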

\begin{proof}
The proof mainly uses the same ideas as \cite[Lemma 4.9]{BMZ}. We split the proof into the two cases $r \geq 3 H_0$ and $r \leq 3 H_0$.

Consider first the case $r \geq 3 H_0$. In this case, it holds $r \geq 3\bar r$ by assumption, hence
\begin{align*}
    \dst_f(r|\bar r) = \frac{2}{3}r^{3/2} + \frac{1}{3} \bar r^{3/2} - \sqrt{\bar r}\,r \geq \frac{2}{3}r^{3/2} - \frac{1}{\sqrt{3}}r^{3/2} = \frac{2-\sqrt{3}}{3} r^{3/2}.
\end{align*}
With the first estimate from \eqref{eq:g_estims} and $r \geq 3H_0$, it thus holds
\begin{align*}
    r\,\left(g(r) - g(\bar r)\right)^2 \leq 2r \left(g(r)^2 + g(\bar r)^2\right) \leq 4 A^2 r \leq \frac{4A^2}{\sqrt{3 H_0}} r^{3/2} \leq \frac{4\sqrt{3}\, A^2}{(2 - \sqrt{3})\sqrt{H_0}} \,\dst_f(r|\bar r),
\end{align*}
proving the claim in the first case.

In the case $r \leq 3H_0$, we apply the second inequality in \eqref{eq:g_estims}, which shows that $g$ is $L$-Lipschitz in the set $[0, 3H_0]$, that contains both $r$ and $\bar r$. This yields
\begin{align*}
    r\,\left(g(r) - g(\bar r)\right)^2 \leq 3 H_0 L^2 (r - \bar r)^2.
\end{align*}
For $\dst_f(r|\bar r)$, we observe that
\begin{align*}
    \dst_f(r|\bar r) &= \frac{2}{3}r^{3/2} + \frac{1}{3} \bar r^{3/2} - \sqrt{\bar r}\,r = \int_{\bar r}^r \left[\sqrt{s} - \sqrt{\bar r}\right]\,\intd s = \int_{\bar r}^r \int_{\bar r}^s \frac{1}{2\sqrt{\sigma}}\,\intd \sigma\,\intd s \\
    &\geq \frac{1}{2 \sqrt{3H_0}} \int_{\bar r}^r (s - \bar r)\,\intd s = \frac{1}{4 \sqrt{3H_0}} (r - \bar r)^2.
\end{align*}
Note that the estimate of the integral above is valid for $r \geq \bar r$ as well as for $r < \bar r$, as in the second case, the ordering of the integral bounds is reversed in both integrals. Combining these estimates yields the claim.
\end{proof}
From this result, we directly obtain an estimate for the integral $\mi_3$. Observe that $(\sqrt{\bar\rho^\eps})_x ^2\leq (c_1^\eps)^2$ in $]-x_*^\eps, x_*^\eps[$, with the constant $c_1^\eps$ from \eqref{eq:weps_derivs_xstar}. Outside this interval, $(\sqrt{\bar\rho^\eps})_x$ is constantly zero. Since $\bar\rho^\eps \leq H_0$ everywhere, it thus holds with \eqref{eq:g_df_estim}:
\begin{align*}
    \mi_3 = \int_\R \rho\, (\sqrt{\bar\rho^\eps})_x^2 \left(g(\rho) - g(\bar\rho^\eps)\right)^2\,\intd x \leq (c_1^\eps)^2\gamma \int_R \dst_f(r|\bar r)\,\intd x.
\end{align*}
Applying \eqref{eq:ml_diff_rep} and \eqref{eq:geodconv_funcineq} yields
\begin{align} \label{eq:i3_estim}
    \mi_3 \leq \frac{(c_1^\eps)^2\gamma}{2\tilde\lambda_\eps} \int_\R \rho\,(\sqrt{\rho} + W_\eps)_x^2\,\intd x,
\end{align}
which gives the desired inequality for $\mi_3$.

In order to estimate $\mi_2$, we use the construction 
\eqref{eq:weps_constr_taylor} of $W_\eps$ to obtain
\begin{align*}
    \mi_2 &=\int_\R \rho\,g(\rho)^2(W_\eps + \sqrt{\bar\rho^\eps})_x^2\,\intd x  = \int_\R \rho\,g(\rho)^2\left((W_\eps)_+\right)_x^2\,\intd x \\
    &= \int_{x_*^\eps}^{+\infty} \rho\,g(\rho)^2\left(c_1^\eps + c_2^\eps\,(x - x_*^\eps)\right)^2\,\intd x + \int_{-\infty}^{-x_*^\eps} \rho\,g(\rho)^2\left(-c_1^\eps + c_2^\eps\,(x + x_*^\eps)\right)^2\,\intd x \\
    &= \int_{x_*^\eps}^{+\infty} \rho\,g(\rho)^2\left((c_1^\eps)^2 + c_2^\eps\,\left(2 c_1^\eps (x - x_*^\eps) + c_2^\eps (x-x_*^\eps\right)\right)\,\intd x \\
    &+ \int_{-\infty}^{-x_*^\eps} \rho\,g(\rho)^2\left((c_1^\eps)^2 + c_2^\eps\,\left(-2c_1^\eps (x + x_*^\eps) + c_2^\eps (x + x_*^\eps)\right)\right)^2\,\intd x \\
    &= (c_1^\eps)^2\int_{\R \setminus [-x_*^\eps, x_*^\eps]} \rho\,g(\rho)^2\,\intd x + \int_\R \rho\,g(\rho)^2(W_\eps)_+\,\intd x.
\end{align*}
For the first integral, we use the fact that $\bar\rho^\eps = 0$ in the set $\R \setminus [-x_*^\eps, x_*^\eps]$ and that $g(0) = 0$. For the second integral, we apply the first estimate in \eqref{eq:g_estims}. With \eqref{eq:g_df_estim} and \eqref{eq:ml_diff_rep}, we obtain
\begin{align*}
    \mi_2 &\leq (c_1^\eps)^2 \int_{\R \setminus [-x_*^\eps, x_*^\eps]} \rho\,\left(g(\rho) - g(\bar\rho^\eps)\right)^2 + A^2\int_\R\rho\,(W_\eps)_+\,\intd x \\
    &\leq (c_1^\eps)^2\gamma \int_\R \dst_f(\rho|\bar\rho^\eps)\,\intd x + A^2\int_\R\rho\,(W_\eps)_+\,\intd x \\
    &\leq \max\left\{(c_1^\eps)^2\gamma, A^2\right\} \left(\ml_\eps(\rho) - \ml_\eps(\bar\rho^\eps)\right) \leq \frac{\max\left\{(c_1^\eps)^2\gamma, A^2\right\}}{2\tilde\lambda_\eps} \int_\R \rho\,(\sqrt{\rho} + W_\eps)_x^2\,\intd x.
\end{align*}
By combining the proven estimates for $\mi_1, \mi_2$ and $\mi_3$, we obtain inequality \eqref{eq:funcineq_h}, finishing the proof of Proposition \ref{prop:funcineqs}.

\subsection{Decay of the auxiliary functional along step}
We now combine the splitting from Lemma \ref{lem:tf_rewritten_step}, the functional inequality \eqref{eq:funcineq_intro} proven in the previous subsection and the results from section \ref{sec:sol_ex} to obtain a rigorous decay estimate for $\ml_\eps$ along a step of the minimizing movement scheme. In the following, we fix some $\hat\rho \in \mpt(\R) \cap H^1(\R)$ and $\rho^+$ as in Proposition \ref{prop:step_min_ex}, with the open set $S := \{\rho^+ > 0\}$. We begin with the following technical result, which is a corollary of Proposition \ref{prop:step_elg} and the geodesic convexity of $\ml_\eps$.
\begin{lemma}
    There holds the estimate
    \begin{equation} \label{eq:step_decay_abovetang}
        \frac{\ml_\eps(\hat\rho) - \ml_\eps(\rho^+)}{\tau} \geq \int_S \rho^+\,\left(\sqrt{\rho^+} + W_\eps\right)_x  \left(-\rho^+_{xx} + \frac{\lambda}{2} x^2 + \eps h'(\rho^+)\right)_x\,\intd x.
    \end{equation}
\end{lemma}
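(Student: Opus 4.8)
The plan is to read \eqref{eq:step_decay_abovetang} as the ``above-the-tangent'' inequality for the geodesically $\tilde\lambda_\eps$-convex functional $\ml_\eps$ along the Wasserstein geodesic joining $\rho^+$ to $\hat\rho$, and then to replace the slope of that geodesic by the right-hand side of the Euler--Lagrange equation \eqref{eq:step_elg}. Concretely, let $T$ be the optimal transport map pushing $\rho^+$ onto $\hat\rho$ and set $\mu_s := \bigl((1-s)\,\mathrm{id} + sT\bigr)\#\rho^+$ for $s\in[0,1]$, so that $(\mu_s)_{s\in[0,1]}$ is the constant-speed geodesic with $\mu_0 = \rho^+$, $\mu_1 = \hat\rho$. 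Since $\ml_\eps$ is geodesically $\tilde\lambda_\eps$-convex (see \cite[Theorem 5.15]{V}) and, by Lemma~\ref{prop:min_xx_bounds}, $\tilde\lambda_\eps \ge 0$ for the small $\eps$ under consideration, the map $s\mapsto \ml_\eps(\mu_s)$ is convex on $[0,1]$, with both endpoint values finite because $\rho^+,\hat\rho\in\mpt(\R)\cap H^1(\R)$. Convexity gives $\ml_\eps(\mu_s)\le(1-s)\ml_\eps(\rho^+)+s\,\ml_\eps(\hat\rho)$, and dividing the rearranged inequality by $s>0$ and letting $s\downarrow0$ yields
\[
 \ml_\eps(\hat\rho)-\ml_\eps(\rho^+)\ \ge\ \ddszerop\ml_\eps(\mu_s),
\]
the one-sided derivative existing in $(-\infty,+\infty]$ by convexity.

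The heart of the argument is the identification $\ddszerop\ml_\eps(\mu_s)=\int_S\rho^+\bigl(\sqrt{\rho^+}+W_\eps\bigr)_x\,(T-\mathrm{id})\,\intd x$. I would split $\ml_\eps = \mathcal U + \mathcal V$ with $\mathcal U(\rho)=\int_\R\tfrac23\rho^{3/2}\,\intd x$ and $\mathcal V(\rho)=\int_\R W_\eps\,\rho\,\intd x$. For $\mathcal V$: since $W_\eps\in C^{1,1}(\R)$ by construction (its second derivative equals the bounded function $-(\sqrt{\bar\rho^\eps})_{xx}$ inside $]-x_*^\eps,x_*^\eps[$ and is constant outside, continuous across $\pm x_*^\eps$), a second-order Taylor expansion along the displacement gives $\bigl|\mathcal V(\mu_s)-\mathcal V(\rho^+)-s\int_\R(W_\eps)_x(T-\mathrm{id})\rho^+\,\intd x\bigr|\le \tfrac12\|(W_\eps)_{xx}\|_{C^0}\,s^2\wass^2(\rho^+,\hat\rho)$, whence $\ddszerop\mathcal V(\mu_s)=\int_\R(W_\eps)_x(T-\mathrm{id})\rho^+\,\intd x$. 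For $\mathcal U$ I would invoke the first-variation formula for internal-energy functionals along displacement interpolations (see \cite[Ch.~9]{AGS} or \cite[Ch.~5]{V}): after the integration by parts described in the last paragraph, $\ddszerop\mathcal U(\mu_s)=\int_S\rho^+(\sqrt{\rho^+})_x\,(T-\mathrm{id})\,\intd x$, the integration by parts being carried out component-by-component on $S=\biguplus_n\,]a_n,b_n[$ with the pressure $P(r)=rF'(r)-F(r)=\tfrac13 r^{3/2}$ of $F(r)=\tfrac23 r^{3/2}$, for which $P(\rho^+)_x=\tfrac12\sqrt{\rho^+}\,\rho^+_x=\rho^+(\sqrt{\rho^+})_x$. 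All three integrals converge absolutely, since $\int_S\rho^+(\sqrt{\rho^+})_x^2\,\intd x=\tfrac14\int_S(\rho^+_x)^2\,\intd x<\infty$ and $\mom_2[\rho^+]<\infty$ put $(\sqrt{\rho^+})_x$ and the (at most linearly growing) $(W_\eps)_x$ into $L^2(\rho^+\,\intd x)$, while $T-\mathrm{id}\in L^2(\rho^+\,\intd x)$ with norm $\wass(\rho^+,\hat\rho)$. Adding the two contributions yields the claimed identity for $\ddszerop\ml_\eps(\mu_s)$.

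It then only remains to insert the Euler--Lagrange equation \eqref{eq:step_elg} of Proposition~\ref{prop:step_elg}, namely $T-\mathrm{id}=\tau\bigl(-\rho^+_{xx}+\tfrac\lambda2 x^2+\eps h'(\rho^+)\bigr)_x$ a.e.\ in $S$; substituting into the identity above, combining with the first display, and dividing by $\tau>0$ produces exactly \eqref{eq:step_decay_abovetang}. I expect the main obstacle to be the rigorous version of the $\mathcal U$-computation --- in particular that $P(\rho^+)\in W^{1,1}(]a_n,b_n[)$ for each $n$ (which follows from $\rho^+\in H^2(\R)$ and unit mass, since $\bigl((\rho^+)^{3/2}\bigr)_x=\tfrac32\sqrt{\rho^+}\,\rho^+_x\in L^1$), and that the boundary terms $P(\rho^+)(T-\mathrm{id})$ vanish at $a_n$ and $b_n$. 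Both finite and infinite endpoints can be handled exactly as in the proofs of Proposition~\ref{prop:step_elg} and Lemma~\ref{lem:step_pde_approx}, using $\rho^+(x)\lesssim(b_n-x)^{3/2}$ near a finite endpoint $b_n$ together with the finite second moment of $\hat\rho$ at infinite endpoints. An equivalent and slightly more economical route is to verify directly that the vector field $(\sqrt{\rho^+}+W_\eps)_x$ lies in the Wasserstein subdifferential $\partial\ml_\eps(\rho^+)$ and to quote the subdifferential characterization of $\lambda$-convex functionals from \cite{AGS}, which produces the first display (with an additional non-negative term $\tfrac{\tilde\lambda_\eps}{2}\wass^2(\rho^+,\hat\rho)$ on the right) in a single step.
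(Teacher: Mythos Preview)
Your proposal is correct and follows essentially the same approach as the paper: geodesic convexity of $\ml_\eps$ gives the above-tangent inequality with the strong subdifferential $(\sqrt{\rho^+}+W_\eps)_x$ paired against $T-\mathrm{id}$, after which the Euler--Lagrange equation \eqref{eq:step_elg} is substituted. The paper's proof is precisely your ``equivalent and slightly more economical route'': it checks $(\sqrt{\rho^+})_x\in L^2(\rho^+\,\intd x)$ directly from $\rho^+_x\in L^2(\R)$, invokes \cite[Theorem~10.4.13]{AGS} to put $(\sqrt{\rho^+}+W_\eps)_x$ into $\partial\ml_\eps(\rho^+)$, and then quotes the above-tangent formula \cite[(10.1.7)]{AGS} --- so the hands-on computation of $\ddszerop\mathcal U(\mu_s)$, the integration by parts on each $]a_n,b_n[$, and the boundary-term analysis you describe are all absorbed into that single citation.
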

\begin{proof}
    We apply the characterization of the metric subdifferential from \cite[Theorem 10.4.13]{AGS} to the functional $\ml_\eps$. In order to check that the assumptions of the theorem are fulfilled, we observe that $(\sqrt{\rho^+})_x = \frac{\rho^+_x}{2\sqrt{\rho^+}} \in L^2(\intd \rho^+)$, since finiteness of $\me_\eps(\rho^+)$ implies $\rho^+_x \in L^2(\R)$. Moreover, since $W_\eps$ is a $\tilde\lambda_\eps$-convex outer potential and the convex function $f(r) = \frac{2}{3}r^{3/2}$ has all the needed properties, the theorem is applicable and we obtain $(\sqrt{\rho^+} + W_\eps)_x \in \partial \ml_\eps(\rho^+)$. Since $\ml_\eps$ is geodesically convex, the above-tangent formula \cite[(10.1.7)]{AGS} yields
    \begin{align*}
        \ml_\eps(\hat\rho) - \ml_\eps(\rho^+) \geq \int_S \rho^+\,\left(\sqrt{\rho^+} + W_\eps\right)_x \left(T - \mathrm{id}\right)\,\intd x.
    \end{align*}
    Inserting the Euler-Lagrange equation \eqref{eq:step_elg} proves the claim.
\end{proof}

We are now in position to prove a decay estimate for $\ml_\eps$ along the step from $\hat\rho$ to $\rho^+$.
\begin{lemma} \label{lem:step_decay}
    There exists an $\eps_0 > 0$ and a constant $C > 0$, independent of $\eps \in (0, \eps_0]$ and independent of $\hat\rho$, such that
    \begin{align} \label{eq:step_decay}
        \frac{\ml_\eps(\hat\rho) - \ml_\eps(\rho^+)}{\tau} \geq \left(2\lambda - C\eps\right) \left(\ml_\eps(\rho^+) - \ml_\eps(\bar\rho^\eps)\right)
    \end{align}
    for all $\eps \in (0, \eps_0]$ and all $\hat\rho \in \mpt(\R) \cap H^1(\R)$.
\end{lemma}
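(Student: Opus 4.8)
The plan is to combine the above-tangent estimate \eqref{eq:step_decay_abovetang} with the rewriting of the flux in Lemma \ref{lem:tf_rewritten_step}, discard the non-negative fourth-order term after an integration by parts, absorb the $\eps$-remainder into the leading term via the functional inequality \eqref{eq:funcineq_intro} and Young's inequality, and finally turn the resulting Dirichlet-type quantity into the relative entropy $\ml_\eps(\rho^+)-\ml_\eps(\bar\rho^\eps)$ through the entropy dissipation inequality \eqref{eq:geodconv_funcineq}; the rate $2\lambda-C\eps$ then comes out by tracking the $\eps$-dependence of $\tilde\lambda_\eps=\sqrt{\lambda/6}-K\eps$.

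Concretely, I would first abbreviate $\xi:=(\sqrt{\rho^+}+W_\eps)_x$, which lies in $L^\infty_{loc}(S)$ since $\rho^+\in W^{3,\infty}_{loc}(S)$ by Proposition \ref{prop:step_elg}. Multiplying the identity of Lemma \ref{lem:tf_rewritten_step} (applied with $\rho=\rho^+$) by $\xi$ and integrating over $S$, the right-hand side of \eqref{eq:step_decay_abovetang} equals
\begin{align*}
    -2\int_S \xi\,\left(\rho^{+\,3/2}\,\xi_x\right)_x\,\intd x \;+\; 6\int_S \rho^+\,(W_\eps)_{xx}\,\xi^2\,\intd x \;+\; \eps\int_S \rho^+\,\mr_\eps\,\xi\,\intd x .
\end{align*}
The last two integrals are finite (by boundedness of $(W_\eps)_{xx}$, finiteness of $\int_\R\rho^+\xi^2\,\intd x$, and \eqref{eq:funcineq_intro} with Cauchy--Schwarz), hence so is the first; I would integrate it by parts on each component interval $]a_n,b_n[$ of $S$, checking that the boundary flux $\rho^{+\,3/2}\xi_x\xi$ vanishes at the endpoints — at finite endpoints since $\rho^+$ vanishes there to first order with $\rho^+\le c\,(b_n-x)^{3/2}$, at infinite endpoints along a subsequence using the finite second moment of $\rho^+$, exactly as in the proof of Lemma \ref{lem:step_pde_approx} — to get $-2\int_S\xi(\rho^{+\,3/2}\xi_x)_x\,\intd x=2\int_S\rho^{+\,3/2}\xi_x^2\,\intd x\ge0$. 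Discarding this term and using $(W_\eps)_{xx}\ge\tilde\lambda_\eps$ on $\R$ together with $\rho^+\equiv0$ outside $S$, I arrive at
\begin{align*}
    \frac{\ml_\eps(\hat\rho)-\ml_\eps(\rho^+)}{\tau}\;\ge\;6\tilde\lambda_\eps\int_\R\rho^+\,\xi^2\,\intd x \;+\; \eps\int_S\rho^+\,\mr_\eps\,\xi\,\intd x .
\end{align*}

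Next I would handle the cross term by Young's inequality, $\eps\int_S\rho^+\mr_\eps\xi\,\intd x\ge-\eps\big(\tfrac{1}{2\delta}\int_\R\rho^+\mr_\eps^2\,\intd x+\tfrac{\delta}{2}\int_\R\rho^+\xi^2\,\intd x\big)$ for $\delta>0$, which after inserting \eqref{eq:funcineq_intro} and taking $\delta=\sqrt\kappa$ becomes $-\eps\sqrt{\kappa}\int_\R\rho^+\xi^2\,\intd x$; thus $\frac{\ml_\eps(\hat\rho)-\ml_\eps(\rho^+)}{\tau}\ge(6\tilde\lambda_\eps-\eps\sqrt\kappa)\int_\R\rho^+\xi^2\,\intd x$. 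Since $\tilde\lambda_\eps\to\sqrt{\lambda/6}>0$ as $\eps\to0$, this prefactor is positive below some $\eps_0>0$, so \eqref{eq:geodconv_funcineq} together with $\ml_\eps(\rho^+)\ge\ml_\eps(\bar\rho^\eps)$ (Lemma \ref{lem:ml_min}) gives $\frac{\ml_\eps(\hat\rho)-\ml_\eps(\rho^+)}{\tau}\ge2\tilde\lambda_\eps(6\tilde\lambda_\eps-\eps\sqrt\kappa)\,(\ml_\eps(\rho^+)-\ml_\eps(\bar\rho^\eps))$. Finally $2\tilde\lambda_\eps(6\tilde\lambda_\eps-\eps\sqrt\kappa)=12\tilde\lambda_\eps^2-2\eps\sqrt\kappa\,\tilde\lambda_\eps=2\lambda-\eps\big(24K\sqrt{\lambda/6}+2\sqrt\kappa\,\tilde\lambda_\eps-12K^2\eps\big)$, and since $\kappa$ (Proposition \ref{prop:funcineqs}) and $K$ (Lemma \ref{prop:min_xx_bounds}) are independent of $\eps$ and of $\hat\rho$, after possibly shrinking $\eps_0$ the bracket is bounded by a constant $C$ with the same independence, giving \eqref{eq:step_decay}.

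The Young step and the constant-tracking are routine. I expect the main obstacle to be the integration by parts: one must justify $\int_\R\rho^+\xi^2\,\intd x<+\infty$ and the vanishing of the boundary flux $\rho^{+\,3/2}\xi_x\xi$ at the (possibly infinite) endpoints of the intervals composing $S$, using only that $\rho^+\in H^2(\R)\cap W^{3,\infty}_{loc}(S)$ has finite second moment, in the same spirit as the boundary-term analysis in the proof of Lemma \ref{lem:step_pde_approx}.
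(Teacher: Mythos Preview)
Your proposal is correct and follows essentially the same route as the paper's proof: insert the rewriting \eqref{eq:tf_rewritten_step} into \eqref{eq:step_decay_abovetang}, integrate the fourth-order term by parts on each component of $S$ and discard it, absorb the $\eps$-remainder via Young with $\delta=\sqrt\kappa$ and \eqref{eq:funcineq_intro}, then apply \eqref{eq:geodconv_funcineq} and track $\tilde\lambda_\eps$.

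One refinement you should anticipate: the boundary analysis at \emph{finite} endpoints is more delicate than in Lemma \ref{lem:step_pde_approx}, and the $H^2$-information $\rho^+\le c(b_n-x)^{3/2}$ alone is not quite enough to kill the flux $\rho^{+\,3/2}\xi_x\xi$, because $\xi$ contains $(\sqrt{\rho^+})_x=\rho^+_x/(2\sqrt{\rho^+})$ which may blow up. The paper closes this by first showing, from the Euler--Lagrange equation \eqref{eq:step_elg} and the monotonicity of the transport map $T$, that $\rho^+_{xx}$ stays \emph{bounded} as $x\uparrow b_n$; this upgrades the vanishing of $\rho^+$ to quadratic and of $\rho^+_x$ to linear, so the bracket $\tfrac12\rho^+\rho^+_{xx}-\tfrac14(\rho^+_x)^2+(\rho^+)^{3/2}(W_\eps)_{xx}$ is $O((b_n-x)^2)$ while $(\sqrt{\rho^+})_x$ is bounded along a subsequence. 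At infinite endpoints the paper argues integrability of the flux (using $\rho^+\in H^2$ and finite second moment with two auxiliary integrations by parts), hence vanishing along a subsequence, rather than a direct pointwise estimate. These are exactly the issues you flag as the main obstacle; just be aware that the ingredient beyond Lemma \ref{lem:step_pde_approx} is the boundedness of $\rho^+_{xx}$ near finite endpoints coming from \eqref{eq:step_elg}.
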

\begin{proof}
We insert \eqref{eq:tf_rewritten_step} for $\rho = \rho^+$ into \eqref{eq:step_decay_abovetang}, split the integral and obtain
\begin{equation} \label{eq:ldiff_split}
\begin{split}
    &\frac{\ml_\eps(\hat\rho) - \ml_\eps(\rho^+)}{\tau} \geq \int_S \rho^+\,\left(\sqrt{\rho^+} + W_\eps\right)_x  \left(-\rho^+_{xx} + \frac{\lambda}{2} x^2 + \eps h'(\rho^+)\right)_x\,\intd x \\
    &= -2\int_S \left(\sqrt{\rho^+} + W_\eps\right)_x \left((\rho^+)^{3/2}(\sqrt{\rho^+} + W_\eps)_{xx}\right)_x\,\intd x + 6\int_S \rho^+(W_\eps)_{xx}\left(\sqrt{\rho^+} + W_\eps\right)_x^2 \\
    &+ \eps \int_S \rho^+\, \mr_\eps\,\left(\sqrt{\rho^+} + W_\eps\right)_x\,\intd x
\end{split}
\end{equation}
Every integral above exists on its own: For the second integral, this is clear since the integrand is non-negative. For the last integral, observe that $(\sqrt{\rho^+} + W_\eps)_x \in L^2(\intd \rho^+)$ since $\rho^+ \in H^1(\R)$ and has finite second moment. Moreover, the functional inequality \eqref{eq:funcineq_intro} proves that also $\mr_\eps \in L^2(\intd \rho^+)$, hence the last integral exists. The sum over all three integrands is also absolutely integrable over $\R$, thus the first integral exists as well.

We now prove that the first term is non-negative by integrating by parts. This step is where the key idea from the introduction that the fourth-order terms can be omitted is made rigorous. Representing $S$ like in \eqref{eq:s_interval_union}, we obtain
\begin{align*}
    &-2\int_S \left(\sqrt{\rho^+} + W_\eps\right)_x \left((\rho^+)^{3/2}(\sqrt{\rho^+} + W_\eps)_{xx}\right)_x\,\intd x \\
    &= \sum_{n=1}^\infty \left[\left.\left(\sqrt{\rho^+} + W_\eps\right)_x (\rho^+)^{3/2}(\sqrt{\rho^+} + W_\eps)_{xx}\right|_{a_n}^{b_n} + \int_{a_n}^{b_n} (\rho^+)^{3/2}(\sqrt{\rho^+} + W_\eps)_{xx}^2\,\intd x\right].
\end{align*}
The integrals at the end are non-negative. We prove that the boundary terms at $b_n$ vanish, as the proof for $a_n$ is analogous. More precisely, we denote
\begin{equation*} \label{eq:bdry_vanish}
    u := \left(\sqrt{\rho^+} + W_\eps\right)_x (\rho^+)^{3/2}(\sqrt{\rho^+} + W_\eps)_{xx}
\end{equation*}
and prove that $u(x) \to 0$ along a subsequence $]a_n, b_n[\, \ni x \to b_n$.
Observe that by differentiating the square-root twice, we have
\begin{align*}
 u = \left(\sqrt{\rho^+} + W_\eps\right)_x \left(\frac{1}{2}\rho^+ \rho^+_{xx} - \frac{1}{4}(\rho_x^+)^2 + (\rho^+)^{3/2} (W_\eps)_{xx}\right).
\end{align*}
We first consider the case $b_n = +\infty$. We obtain the claim by showing that
\begin{align}\label{eq:u_intable}
    \left|\int_{a_n + 1}^{+\infty} u(x)\,\intd x \right| < +\infty.
\end{align}
To prove \eqref{eq:u_intable}, note that $\rho^+_{xx} \in L^2(\intd \rho^+)$ by the boundedness of $\rho^+$ and Proposition \ref{prop:step_reg}. Together with $(\sqrt{\rho^+} + W_\eps)_x \in L^2(\intd \rho^+)$ and the bound on $(W_\eps)_{xx}$, this implies
\begin{align*}
    \left(\sqrt{\rho^+} + W_\eps\right)_x \left(\frac{1}{2}\rho^+ \rho^+_{xx} + (\rho^+)^{3/2} (W_\eps)_{xx}\right) \in L^1(\R).
\end{align*}
Hence, in order to prove \eqref{eq:u_intable}, it remains to consider the middle term on the right. Integrating by parts yields
\begin{align*}
    \left|\int_{a_n + 1}^{+\infty}  (\rho^+_x)^2\,(W_\eps)_x\,\intd x\right| = \left|\left.\rho^+\rho^+_x \,(W_\eps)_x\, \right|_{a_n + 1}^{+\infty} - \int_{a_n + 1}^{+\infty} \rho^+\,\left(\rho^+_{xx} (W_\eps)_x + \rho^+_x (W_\eps)_{xx}\right)\,\intd x\right| < +\infty,
\end{align*}
where we have used that $(W_\eps)_x$ grows only linearly for $x \to +\infty$, and the fact that $\rho^+ \in \mpt(\R)\cap H^2(\R)$. For the remaining term, using again integration by parts implies
\begin{align*}
    \left|\int_{a_n + 1}^{+\infty}  (\rho^+_x)^2\,\left(\sqrt{\rho^+}\right)_x\,\intd x\right| = \left|\left.\sqrt{\rho^+}\,(\rho^+_x)^2\, \right|_{a_n + 1}^{+\infty} - 2\int_{a_n + 1}^{+\infty} \sqrt{\rho^+}\,\rho^+_x\,\rho^+_{xx}\,\intd x  \right| < +\infty,
\end{align*}
which follows from $\rho^+ \in H^2(\R)$ as this implies $\rho^+_{xx} \in L^2(\R)$ and $\sqrt{\rho^+},\,\rho^+_x\in L^4(\R)$. This finishes the proof of \eqref{eq:u_intable}, and thus the case $b_n = +\infty$.

In the case $b_n < +\infty$, we begin by showing that $\rho^+_{xx}$ stays bounded for $x \uparrow b_n$. Since the optimal transport map $T$ is a monotone map on $S$, the left-hand side of \eqref{eq:step_elg} is either approaches a finite value as $x \uparrow b_n$ or tends to $+\infty$. As $\lambda x + \eps h'(\rho^+)_x$ on the right-hand side are bounded near $b_n$, this implies that either $\rho^+_{xxx}$ is bounded for $x \uparrow b_n$ or it tends to $-\infty$. In the first case, it is clear that $\rho^+_{xx}$ is bounded as well. In the second case, the same result follows by observing that $\rho^+_{xx} \to -\infty$ for $x \uparrow b_n$ is impossible as $\rho^+$ is non-negative and $\rho^+_x(b_n) = 0$.

With boundedness of $\rho^+_{xx}$ near $b_n$ and the fact that $\rho^+(b_n) = \rho^+_x(b_n) = 0$, it directly follows that $|\rho^+_x|$ tends to 0 at least linearly and $\rho^+$ tends to zero at least quadratically as $x \uparrow b_n$. In conjunction with the boundedness of $(W_\eps)_{xx}$, this implies, for some constant $c$:
\begin{align*}
    \left|\frac{1}{2}\rho^+ \rho^+_{xx} - \frac{1}{4}(\rho_x^+)^2 + (\rho^+)^{3/2} (W_\eps)_{xx}\right| \leq c\,(x-b_n)^2
\end{align*}
as $x$ approaches $b_n$ from below. Since $(W_\eps)_x$ is locally bounded and $\sqrt{\rho^+}$ converges to $0$ at least linearly, which implies boundedness of $(\sqrt{\rho^+})_x$ at least along a subsequence $x \uparrow b_n$, this proves the claim, finishing the proof that the boundary terms in the integration by parts vanish and the first term in \eqref{eq:ldiff_split} is non-negative.

We now estimate the last integral in \eqref{eq:ldiff_split} using the functional inequality \eqref{eq:funcineq_intro}. Applying Young's inequality yields, with the constant $\kappa$ from \eqref{eq:funcineq_intro}:
\begin{align*}
    \int_S \rho^+\, \left|\mr_\eps\,\left(\sqrt{\rho^+} + W_\eps\right)_x\right|\,\intd x &\leq \frac{1}{2\sqrt{\kappa}} \int_S \rho^+\, \mr_\eps^2\,\intd x + \frac{\sqrt{\kappa}}{2} \int_S \rho^+\left(\sqrt{\rho^+} + W_\eps\right)_x^2\,\intd x \\
    &\leq \sqrt{\kappa} \int_S  \rho^+\left(\sqrt{\rho^+} + W_\eps\right)_x^2\,\intd x.
\end{align*}
Inserting these estimates into \eqref{eq:ldiff_split} yields with $(W_\eps)_{xx} \geq \tilde\lambda_\eps$ and \eqref{eq:geodconv_funcineq}:
\begin{align*}
    &\frac{\ml_\eps(\hat\rho) - \ml_\eps(\rho^+)}{\tau} \geq 6\int_S \rho^+(W_\eps)_{xx}\left(\sqrt{\rho^+} + W_\eps\right)_x^2 - \eps \sqrt{\kappa} \int_\R \rho^+ \left(\sqrt{\rho^+} + W_\eps\right)_x^2\,\intd x \\
    &\geq \left(6\tilde\lambda_\eps - \eps \sqrt{\kappa}\right) \int_\R \rho^+ \left(\sqrt{\rho^+} + W_\eps\right)_x^2\,\intd x \geq 2\tilde\lambda_\eps \left(6\tilde\lambda_\eps - \eps\sqrt{\kappa}\right) \left( \ml_\eps(\rho^+) - \ml_\eps(\bar\rho^\eps)\right),
\end{align*}
where the last two estimates follow use $(W_\eps)_{xx} \geq \tilde\lambda_\eps$ and \eqref{eq:geodconv_funcineq}, respectively. This proves the claim, as $12\tilde\lambda_\eps^2 \geq 2\lambda - C\eps$ for a constant $C$ by definition of $\tilde\lambda_\eps$.
\end{proof}

As a corollary of this result, we prove the first part of Theorem \ref{thm:main_cvgce}, showing uniqueness of the global minimizer $\bar\rho^\eps$ of the energy, provided that $\eps > 0$ is small enough.
\begin{corollary}
    Under the assumption that $\eps \in (0, \eps_0]$ with $\eps_0$ from the previous result, and that the constant $2\lambda - C\eps$ from \eqref{eq:step_decay} is positive, the global minimizer $\bar\rho^\eps$ of the energy $\me_\eps$ is unique.
\end{corollary}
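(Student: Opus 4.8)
The plan is to deduce uniqueness directly from the one-step decay estimate \eqref{eq:step_decay}, by evaluating it at a \emph{stationary} step of the minimizing movement scheme. First I would let $\tilde\rho$ be an arbitrary global minimizer of $\me_\eps$ over $\mpt(\R)$ and record that $\tilde\rho \in \mpt(\R) \cap H^1(\R)$: since $\me_\eps(\tilde\rho) = \me_\eps(\bar\rho^\eps) < +\infty$, the definition of $\me_\eps$ forces $\tilde\rho \in H^1(\R)$, so $\tilde\rho$ is an admissible datum in Lemma \ref{lem:step_decay}.

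The key observation is that $\tilde\rho$ is a fixed point of the scheme. Indeed, for every $\tau > 0$ and every $\rho \in \mpt(\R) \cap H^1(\R)$ one has
\begin{align*}
    \eepstau(\rho|\tilde\rho) = \me_\eps(\rho) + \frac{1}{2\tau}\wass^2(\rho, \tilde\rho) \geq \me_\eps(\tilde\rho) = \eepstau(\tilde\rho|\tilde\rho),
\end{align*}
with equality only if simultaneously $\me_\eps(\rho) = \me_\eps(\tilde\rho)$ and $\wass(\rho,\tilde\rho) = 0$, i.e.\ only for $\rho = \tilde\rho$. Hence $\tilde\rho$ is the unique minimizer of $\eepstau(\cdot|\tilde\rho)$, so that choosing $\hat\rho = \tilde\rho$ in Lemma \ref{lem:step_decay} gives $\rho^+ = \tilde\rho$. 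Plugging $\hat\rho = \rho^+ = \tilde\rho$ into \eqref{eq:step_decay}, the left-hand side vanishes, and we are left with
\begin{align*}
    0 \geq (2\lambda - C\eps)\left(\ml_\eps(\tilde\rho) - \ml_\eps(\bar\rho^\eps)\right).
\end{align*}
Since by hypothesis $2\lambda - C\eps > 0$, this yields $\ml_\eps(\tilde\rho) \leq \ml_\eps(\bar\rho^\eps)$. On the other hand, Lemma \ref{lem:ml_min} identifies $\bar\rho^\eps$ as the \emph{unique} global minimizer of $\ml_\eps$ over $\mpt(\R)$, so $\ml_\eps(\tilde\rho) \geq \ml_\eps(\bar\rho^\eps)$, with equality forcing $\tilde\rho = \bar\rho^\eps$. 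Combining the two inequalities gives $\ml_\eps(\tilde\rho) = \ml_\eps(\bar\rho^\eps)$, hence $\tilde\rho = \bar\rho^\eps$, which is exactly the asserted uniqueness.

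I do not expect a genuine obstacle here: all the analytic work has already been invested in Lemma \ref{lem:step_decay} and Lemma \ref{lem:ml_min}. The only points requiring a word of care are the regularity remark that a finite-energy minimizer of $\me_\eps$ automatically lies in $H^1(\R)$, so that the step estimate applies to it, and the elementary observation that such a minimizer is a stationary point of the one-step functional $\eepstau(\cdot|\tilde\rho)$ — it is precisely this fact that collapses \eqref{eq:step_decay} to the comparison $\ml_\eps(\tilde\rho) \leq \ml_\eps(\bar\rho^\eps)$ and lets the strict geodesic convexity of $\ml_\eps$ conclude.
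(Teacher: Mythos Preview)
Your proposal is correct and follows essentially the same argument as the paper: take an arbitrary global minimizer of $\me_\eps$, observe it is a fixed point of the minimizing movement step so that the left-hand side of \eqref{eq:step_decay} vanishes, and conclude from positivity of $2\lambda - C\eps$ together with Lemma \ref{lem:ml_min} that the minimizer must equal $\bar\rho^\eps$. The only difference is that you spell out the regularity remark and the fixed-point property in slightly more detail than the paper does.
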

\begin{proof}
     Let $\eps > 0$ satisfy the given assumption, and consider any global minimizer $\bar\eta \in \mpt(\R) \cap H^1(\R)$ of $\me_\eps$. Taking $\hat\rho = \bar\eta$ for a minimizing movement step, it clearly holds $\rho^+ = \bar\eta$ as well. Thus, the left-hand side of \eqref{eq:step_decay} is zero. Since $2\lambda - C\eps > 0$ by assumption, this implies $\ml_\eps(\bar\eta) \leq \ml_\eps(\bar\rho^\eps)$. As $\bar\rho^\eps$ is the unique global minimizer of the auxiliary functional $\ml_\eps$ by Lemma \ref{lem:ml_min}, it follows $\bar\eta = \bar\rho^\eps$, proving the claim.
\end{proof}

\subsection{Exponential decay along limit curve} We apply the decay estimate for $\ml_\eps$ along a single step to analyze its decay along multiple steps, and derive exponential decay along the limit curve $\rho$ from section \ref{sec:sol_ex}, proving Theorem \ref{thm:main_cvgce}. We start by deriving the following decay estimate for the time-discrete solutions $\rho^\tau$.
\begin{lemma}
    For all $\tau > 0$ and $t \geq 0$, it holds
    \begin{align} \label{eq:discrete_decay}
        \ml_\eps(\rho^\tau(t)) - \ml_\eps(\bar\rho^\eps) \leq \left(1 + (2\lambda - C\eps)\tau\right)^{-t/\tau}\left(\ml_\eps(\rho_0) - \ml_\eps(\bar\rho^\eps)\right),
    \end{align}
    with the constant $C$ from Lemma \ref{lem:step_decay}.
\end{lemma}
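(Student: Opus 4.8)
The statement is a purely discrete-time iteration of the one–step estimate \eqref{eq:step_decay}. Recall that by construction $\rho^\tau$ is piecewise constant in time, $\rho^\tau(t) = \rho_n^+$ for $n\tau < t \leq (n+1)\tau$ and $\rho^\tau(0) = \rho_0$, where $\hat\rho_0 = \rho_0$, $\hat\rho_{n+1} = \rho_n^+$, and $\rho_n^+$ minimizes $\eepstau(\cdot\,|\,\hat\rho_n)$. The plan is to set $e_{-1} := \ml_\eps(\rho_0) - \ml_\eps(\bar\rho^\eps)$ and $e_n := \ml_\eps(\rho_n^+) - \ml_\eps(\bar\rho^\eps)$ for $n \geq 0$; all these quantities are non-negative by Lemma \ref{lem:ml_min}, and finite since each $\rho_n^+ \in \mpt(\R)\cap H^1(\R)$ (so $\rho^{3/2} \in L^1$ and $W_\eps\rho \in L^1$ by the quadratic growth of $W_\eps$ and the finite second moment). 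Then $\ml_\eps(\rho^\tau(t)) - \ml_\eps(\bar\rho^\eps) = e_n$ for $n\tau < t \leq (n+1)\tau$, and $= e_{-1}$ for $t = 0$.

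Next I would apply Lemma \ref{lem:step_decay} with $\hat\rho = \hat\rho_n$ and $\rho^+ = \rho_n^+$ for each $n \geq 0$ (assuming $\eps \in (0,\eps_0]$). Since $\ml_\eps(\hat\rho_n) = \ml_\eps(\bar\rho^\eps) + e_{n-1}$ — using $\hat\rho_n = \rho_{n-1}^+$ for $n\geq 1$ and $\hat\rho_0 = \rho_0$ for $n=0$ — estimate \eqref{eq:step_decay} rearranges to
\begin{align*}
    e_{n-1} - e_n \geq (2\lambda - C\eps)\,\tau\, e_n, \qquad \text{i.e.} \qquad e_n \leq \frac{e_{n-1}}{1 + (2\lambda - C\eps)\tau}.
\end{align*}
An immediate induction on $n$ then gives $e_n \leq \bigl(1 + (2\lambda - C\eps)\tau\bigr)^{-(n+1)} e_{-1}$ for every $n \geq 0$.

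Finally, for $t > 0$ let $n \geq 0$ be the unique integer with $n\tau < t \leq (n+1)\tau$, so that $n+1 \geq t/\tau$. Provided $1 + (2\lambda - C\eps)\tau \geq 1$ — which holds in the regime of interest, and can be enforced by shrinking $\eps_0$ so that $2\lambda - C\eps > 0$ — the map $r \mapsto r^{-(n+1)}$ versus $r^{-t/\tau}$ gives $\bigl(1 + (2\lambda - C\eps)\tau\bigr)^{-(n+1)} \leq \bigl(1 + (2\lambda - C\eps)\tau\bigr)^{-t/\tau}$, and combining with the inductive bound and $\ml_\eps(\rho^\tau(t)) - \ml_\eps(\bar\rho^\eps) = e_n$ yields \eqref{eq:discrete_decay}. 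The case $t = 0$ is trivial, since both sides then equal $e_{-1}$. There is no serious obstacle here: the only points requiring care are the indexing convention at the first step ($\hat\rho_0 = \rho_0$ rather than a previous minimizer) and the elementary monotonicity step, for which one needs the base $1 + (2\lambda-C\eps)\tau$ to be at least $1$.
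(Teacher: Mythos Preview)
Your proposal is correct and follows essentially the same approach as the paper: iterate the one-step estimate \eqref{eq:step_decay} to obtain geometric decay of $\ml_\eps(\rho_n^+)-\ml_\eps(\bar\rho^\eps)$, then pass from the integer exponent $n+1$ to $t/\tau$ using $n+1\ge t/\tau$. Your explicit remark that the last monotonicity step requires $1+(2\lambda-C\eps)\tau\ge 1$, i.e.\ $2\lambda-C\eps\ge 0$, is a point the paper leaves implicit but indeed relies on.
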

\begin{proof}
    We prove by induction that for every integer $n \geq 0$, it holds
    \begin{align*}
        \ml_\eps(\hat\rho_n) - \ml_\eps(\bar\rho^\eps) \leq \left(1 + (2\lambda - C\eps)\tau\right)^{-n}\left(\ml_\eps(\rho_0) - \ml_\eps(\bar\rho^\eps)\right).
    \end{align*}
    The claim is clear for $n = 0$. For $n \geq 1$, induction and \eqref{eq:step_decay} yield
    \begin{align*}
        &\ml_\eps(\hat\rho_n) - \ml_\eps(\bar\rho^\eps) = \ml_\eps(\hat\rho_n) - 
        \ml_\eps(\hat\rho_{n - 1}) + \ml_\eps(\hat\rho_{n - 1}) - \ml_\eps(\bar\rho^\eps) \\
        &\leq \ml_\eps(\rho_{n-1}^+) - \ml_\eps(\hat\rho_{n-1}) + \left(1 + (2\lambda - C\eps)\tau\right)^{-(n-1)}\left(\ml_\eps(\rho_0) - \ml_\eps(\bar\rho^\eps)\right) \\
        &\leq -(2\lambda - C\eps)\tau \,\left(\ml_\eps(\rho_{n-1}^+) - \ml_\eps(\bar\rho^\eps)\right) + \left(1 + (2\lambda - C\eps)\tau\right)^{-(n-1)}\left(\ml_\eps(\rho_0) - \ml_\eps(\bar\rho^\eps)\right)\\
        &= (2\lambda - C\eps)\tau\, \left(\ml_\eps(\hat\rho_n) - \ml_\eps(\bar\rho^\eps)\right) + \left(1 + (2\lambda - C\eps)\tau\right)^{-(n-1)}\left(\ml_\eps(\rho_0) - \ml_\eps(\bar\rho^\eps)\right).
    \end{align*}
    Adding $(2\lambda - C\eps)\tau\, (\ml_\eps(\hat\rho_n) - \ml_\eps(\bar\rho^\eps))$ to both sides and dividing by $1 + (2\lambda - C\eps)\tau$ yields the claim. To obtain \eqref{eq:discrete_decay}, let $n \geq 0$ be an integer such that $(n-1)\tau < t \leq n\tau$. Then we have $\rho^\tau(t) = \hat\rho_n$ and $-n \leq -t/\tau$, thus \eqref{eq:discrete_decay} follows from the estimate proven above.
\end{proof}
We now pass to the limit $\tau \to 0$ in \eqref{eq:discrete_decay} and obtain exponential decay of $\ml_\eps(\rho)$ along the limit curve $\rho$.
\begin{corollary}
    The weak solution $\rho$ from  Lemma \ref{lem:tau_cvgce} satisfies at every $t \geq 0$ the estimate
    \begin{align} \label{eq:continuous_exp_decay}
        \ml_\eps(\rho(t)) - \ml_\eps(\bar\rho^\eps) \leq \left(\ml_\eps(\rho_0) - \ml_\eps(\bar\rho^\eps)\right)\,e^{-(2\lambda - C \eps)t}.
    \end{align}
\end{corollary}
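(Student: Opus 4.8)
The plan is to pass to the limit $\tau \to 0$ in the discrete decay estimate \eqref{eq:discrete_decay} along the subsequence $(\tau_n)_n \to 0$ produced in Lemma \ref{lem:tau_cvgce}, for which $\rho^{\tau_n}(t)$ converges to $\rho(t)$ for every fixed $t \geq 0$. On the right-hand side nothing essential happens: $\ml_\eps(\rho_0)$ is finite because $\rho_0 \in \mpt(\R) \cap H^1(\R)$ (so $\rho_0 \in L^1 \cap L^\infty$ controls $\int_\R \rho_0^{3/2}\,\intd x$, while the finite second moment of $\rho_0$ together with the at most quadratic growth of $W_\eps$ controls $\int_\R W_\eps\,\rho_0\,\intd x$), and $\ml_\eps(\rho_0) - \ml_\eps(\bar\rho^\eps) \geq 0$ by Lemma \ref{lem:ml_min}; moreover $(1 + (2\lambda - C\eps)\tau)^{-t/\tau} \to e^{-(2\lambda - C\eps)t}$ as $\tau \to 0$. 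Hence the only real content is the lower semicontinuity inequality $\ml_\eps(\rho(t)) - \ml_\eps(\bar\rho^\eps) \leq \liminf_{n\to\infty}\big(\ml_\eps(\rho^{\tau_n}(t)) - \ml_\eps(\bar\rho^\eps)\big)$ at each fixed $t$.

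For this I would exploit the representation \eqref{eq:ml_diff_rep}, which expresses $\ml_\eps(\sigma) - \ml_\eps(\bar\rho^\eps) = \int_\R \dst_f(\sigma|\bar\rho^\eps)\,\intd x + \int_\R \sigma\,(W_\eps)_+\,\intd x$ as a sum of two integrals whose integrands are non-negative and continuous in the $\sigma$-variable. The crucial feature is the non-negativity, which makes Fatou's lemma directly applicable and sidesteps any integrability issue coming from the quadratic growth of $W_\eps$. As shown in the proof of Lemma \ref{lem:tau_cvgce}, $\rho^{\tau_n}(t) \to \rho(t)$ strongly in $L^2(\R)$ for every $t \geq 0$; passing to a subsequence realizing the $\liminf$ and then, along it, to a further subsequence converging $\rho(t)$-almost everywhere, Fatou's lemma applied to each of the two non-negative integrals gives $\int_\R \dst_f(\rho(t)|\bar\rho^\eps)\,\intd x \leq \liminf \int_\R \dst_f(\rho^{\tau_n}(t)|\bar\rho^\eps)\,\intd x$ and $\int_\R \rho(t)\,(W_\eps)_+\,\intd x \leq \liminf \int_\R \rho^{\tau_n}(t)\,(W_\eps)_+\,\intd x$; summing these and using $\liminf A_n + \liminf B_n \leq \liminf (A_n + B_n)$ yields the claimed lower semicontinuity of $\ml_\eps(\cdot) - \ml_\eps(\bar\rho^\eps)$ along the subsequence, hence along $(\tau_n)$.

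Combining the two steps gives, for every $t \geq 0$, $\ml_\eps(\rho(t)) - \ml_\eps(\bar\rho^\eps) \leq \liminf_n\big(\ml_\eps(\rho^{\tau_n}(t)) - \ml_\eps(\bar\rho^\eps)\big) \leq \big(\ml_\eps(\rho_0) - \ml_\eps(\bar\rho^\eps)\big)\lim_n (1 + (2\lambda - C\eps)\tau_n)^{-t/\tau_n} = \big(\ml_\eps(\rho_0) - \ml_\eps(\bar\rho^\eps)\big)e^{-(2\lambda - C\eps)t}$, which is \eqref{eq:continuous_exp_decay}; the case $t = 0$ is trivial since $\rho^\tau(0) = \rho_0$. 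I do not anticipate a genuine obstacle, since this is a routine limit passage; the single point deserving care is that the naive approach --- using the narrow convergence $\rho^{\tau_n}(t) \weakto \rho(t)$ and continuity of $W_\eps$ to pass to the limit in $\int_\R W_\eps\,\rho$ --- is not legitimate because $W_\eps$ is unbounded, so one must instead use the manifestly non-negative decomposition \eqref{eq:ml_diff_rep} together with Fatou, keeping track of the fact that the Fatou inequalities point exactly in the direction needed for lower semicontinuity.
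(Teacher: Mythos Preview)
Your argument is correct and mirrors the paper's proof, which simply notes that the right-hand side of \eqref{eq:discrete_decay} converges to that of \eqref{eq:continuous_exp_decay} and that $\ml_\eps$ is lower semicontinuous with respect to narrow convergence. Your worry about the ``naive approach'' is slightly misplaced: since $W_\eps$ is bounded from below (it is $\tilde\lambda_\eps$-convex with $\tilde\lambda_\eps > 0$), the map $\rho \mapsto \int_\R W_\eps\,\rho\,\intd x$ \emph{is} lower semicontinuous under narrow convergence, and $\rho \mapsto \int_\R \rho^{3/2}\,\intd x$ is l.s.c.\ by convexity---so the paper's shorter route works directly, without detouring through $L^2$-a.e.\ convergence and the decomposition \eqref{eq:ml_diff_rep}.
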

\begin{proof}
    The fact that right-hand side of \eqref{eq:discrete_decay} approaches the right-hand side of \eqref{eq:continuous_exp_decay} is elementary. On the left-hand side, it suffices to observe that $\ml_\eps$ is lower semi-continuous with respect to narrow convergence.
\end{proof}

In order to finish the proof of Theorem \ref{thm:main_cvgce}, we show that exponential decay of $\ml_\eps$ implies exponential convergence of $\rho$ to $\bar\rho^\eps$ in the $L^1$-norm.
\begin{corollary}
There is a constant $C_1 < +\infty$ independent of $\rho_0$ and $t$ such that for all $t \geq 0$, it holds
\begin{align*}
    \|\rho(t) - \bar\rho^\eps\|_{L^1(\R)}^2 \leq C_1\,\left(\ml_\eps(\rho_0) - \ml_\eps(\bar\rho^\eps)\right)\,e^{-(2\lambda - C \eps)t}.
\end{align*}
In particular, the solution curve $\rho$ converges to $\bar\rho^\eps$ in $L^1(\R)$ exponentially in time.
\end{corollary}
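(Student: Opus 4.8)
The plan is to deduce the $L^1$ bound from the exponential decay \eqref{eq:continuous_exp_decay} of $\ml_\eps(\rho(t)) - \ml_\eps(\bar\rho^\eps)$ combined with the representation \eqref{eq:ml_diff_rep}, by means of a Csisz\'ar--Kullback--Pinsker--type estimate tailored to the Bregman divergence $\dst_f$. The key observation is that one does \emph{not} need to control the full $L^1$ distance by $\int_\R \dst_f(\rho(t)|\bar\rho^\eps)\,\intd x$ pointwise, but only its negative part. Indeed, since $\rho(t)$ and $\bar\rho^\eps$ are both probability densities, $\int_\R (\rho(t) - \bar\rho^\eps)\,\intd x = 0$, so
\begin{align*}
    \|\rho(t) - \bar\rho^\eps\|_{L^1(\R)} = 2\int_\R \left(\bar\rho^\eps - \rho(t)\right)_+\,\intd x = 2\int_{\{\rho(t) < \bar\rho^\eps\}} \left(\bar\rho^\eps - \rho(t)\right)\,\intd x,
\end{align*}
and the set $\{\rho(t) < \bar\rho^\eps\}$ is contained in $]-x_*^\eps, x_*^\eps[$, whose length is bounded uniformly for small $\eps$ by \eqref{eq:x_eps_bound}, and on which $\bar\rho^\eps$ is bounded by the $\eps$-uniform constant $H_0$.

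Next I would record the elementary pointwise bound, valid for $0 \le r \le \bar r$,
\begin{align*}
    (\bar r - r)^2 \le 4\sqrt{\bar r}\,\dst_f(r|\bar r),
\end{align*}
which follows exactly as in the proof of Lemma \ref{lem:g_df_estim}: writing $\dst_f(r|\bar r) = \int_r^{\bar r}(\sqrt{\bar r} - \sqrt{s})\,\intd s = \int_r^{\bar r}\frac{\bar r - s}{\sqrt{\bar r} + \sqrt{s}}\,\intd s \ge \frac{1}{2\sqrt{\bar r}}\int_r^{\bar r}(\bar r - s)\,\intd s$ and evaluating the last integral. Applying this with $r = \rho(t)(x)$ and $\bar r = \bar\rho^\eps(x) \le H_0$ on $\{\rho(t) < \bar\rho^\eps\}$, followed by the Cauchy--Schwarz inequality over that set, gives
\begin{align*}
    \|\rho(t) - \bar\rho^\eps\|_{L^1(\R)} &\le 2\left(\mathrm{meas}\{\rho(t) < \bar\rho^\eps\}\right)^{1/2}\left(\int_{\{\rho(t) < \bar\rho^\eps\}} (\bar\rho^\eps - \rho(t))^2\,\intd x\right)^{1/2} \\
    &\le 2\left(2 x_*^\eps\right)^{1/2}\left(4\sqrt{H_0}\right)^{1/2}\left(\int_\R \dst_f(\rho(t)|\bar\rho^\eps)\,\intd x\right)^{1/2}.
\end{align*}

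Finally, \eqref{eq:ml_diff_rep} yields $\int_\R \dst_f(\rho(t)|\bar\rho^\eps)\,\intd x \le \ml_\eps(\rho(t)) - \ml_\eps(\bar\rho^\eps)$, since the term $\int_\R \rho(t)\,(W_\eps)_+\,\intd x$ is non-negative; squaring the previous display and inserting \eqref{eq:continuous_exp_decay} then gives
\begin{align*}
    \|\rho(t) - \bar\rho^\eps\|_{L^1(\R)}^2 \le 32\, x_*^\eps\sqrt{H_0}\,\left(\ml_\eps(\rho_0) - \ml_\eps(\bar\rho^\eps)\right)\,e^{-(2\lambda - C\eps)t},
\end{align*}
so that $C_1 := 32\, x_*^\eps \sqrt{H_0}$ works, and can even be chosen independent of $\eps \in (0,\eps_0]$ after shrinking $\eps_0$, using the uniform bounds on $x_*^\eps$ and $H_0$. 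I do not anticipate a real obstacle here; the only point requiring care is the one that makes the whole argument short, namely that passing to the negative part confines the integration to the bounded support of $\bar\rho^\eps$ (where $\bar\rho^\eps$ is bounded), so that no separate treatment of the region where $\rho(t)$ is large is needed — there $\dst_f$ only controls $\rho(t)$ linearly rather than quadratically, and a naive CKP attempt via the positive part would run into exactly that difficulty.
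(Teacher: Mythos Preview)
Your proof is correct. The paper's own argument is a one-liner: it invokes a general Csisz\'ar--Kullback--Pinsker inequality from the literature (specifically \cite[Theorem 30]{CJMTU}) to obtain $\|\rho - \bar\rho^\eps\|_{L^1(\R)}^2 \leq C_1\,(\ml_\eps(\rho) - \ml_\eps(\bar\rho^\eps))$ directly, and then applies \eqref{eq:continuous_exp_decay}.

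Your route is genuinely different in that it is fully self-contained: rather than citing an abstract CKP inequality, you exploit the equal-mass identity $\|\rho - \bar\rho^\eps\|_{L^1} = 2\int(\bar\rho^\eps - \rho)_+$ to localize to the compact support $]-x_*^\eps, x_*^\eps[$ of $\bar\rho^\eps$, and there combine the elementary quadratic lower bound $(\bar r - r)^2 \le 4\sqrt{\bar r}\,\dst_f(r|\bar r)$ for $r \le \bar r$ with Cauchy--Schwarz. This buys you an explicit constant $C_1 = 32\,x_*^\eps\sqrt{H_0}$ and avoids any external dependence; the paper's approach is shorter but imports machinery. Your closing remark is exactly the point: the negative-part trick sidesteps the region $\{\rho > \bar\rho^\eps\}$, where $\dst_f$ degenerates from quadratic to linear control and a naive argument would stall.
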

\begin{proof}
    The claim follows from a Czisz\'{a}r-Kullback type inequality: It follows from e.g. \cite[Theorem 30]{CJMTU} that there exists a constant $C_1 < +\infty$ such that
    \begin{align*}
    \|\rho - \bar\rho^\eps\|_{L^1(\R)}^2 \leq C_1 \left(\ml_\eps(\rho) - \ml_\eps(\bar\rho^\eps)\right)
    \end{align*}
    for all $\rho \in \mpt(\R)$. The claim now follows directly from the previous result.
\end{proof}

\appendix
\section{Derivative of Wasserstein distance in one space dimension}
In this section, we prove a general formula for the derivative of the Wasserstein distance between sufficiently regular probability densities in one space dimension. The result is being used in the proof of Proposition \ref{prop:step_elg}.
\begin{lemma} \label{lem:wass_deriv}
    Let $\hat\rho, \rho \in \mpt(\R)$ be probability densities on $\R$, and assume that $\rho \in C(\R)$. We fix a nonempty open interval $I =\, ]a, b[ \,\subset \R$ with $a, b \in \R \cup \{\pm \infty\}$ such that $\rho > 0$ everywhere in $I$. Additionally, we fix any $\eta \in C^\infty_c(I)$ with $\int_I \eta \,\intd x = 0$, and $s_0 > 0$ such that $\rho^s := \rho + s \eta > 0$ in $I$ for every $s \in [-s_0, s_0]$. Then, the mapping $s \mapsto \wass^2(\rho^s, \hat\rho)$ is differentiable at $s = 0$, and it holds
    \begin{align*}
        \left.\frac{\intd}{\intd s}\right|_{s = 0} \wass^2(\rho^s, \hat\rho) = 2\int_a^b \eta\,\varphi\,\intd x,
    \end{align*}
    where the function $\varphi: I \to \R$ is defined as
    \begin{align*}
        \varphi(x) := \int_{x_0}^x \left(x - T(x)\right)\,\intd x,
    \end{align*}
    where $x_0 \in I$ is arbitrary and $T \in L^\infty_{loc}(I)$ denotes the optimal transport map from $\rho$ to $\hat\rho$, restricted to $I$.
\end{lemma}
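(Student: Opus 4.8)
The plan is to work with the one–dimensional description of the Wasserstein distance through quantile functions. Write $F_s$ for the cumulative distribution function of $\rho^s = \rho + s\eta$ and $F_{\hat\rho}$ for that of $\hat\rho$, and let $G_s := F_s^{-1}$ and $H := F_{\hat\rho}^{-1}$ denote the corresponding generalized inverses on $(0,1)$. Since $\eta \in C^\infty_c(I)$ with $\int_\R \eta\,\intd x = 0$, its primitive $E(x) := \int_{-\infty}^x \eta\,\intd y$ is smooth with $\supp E \subseteq [c,d] \Subset I$, where $c := \min\supp\eta$ and $d := \max\supp\eta$; in particular $E(c) = E(d) = 0$ and $F_s = F_0 + sE$. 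As $\rho^s$ is a probability density with finite second moment for every $|s| \le s_0$, one may use the classical identity (see e.g.\ \cite{V})
\begin{align*}
    \wass^2(\rho^s, \hat\rho) = \int_0^1 \big(G_s(q) - H(q)\big)^2\,\intd q,
\end{align*}
so that
\begin{align*}
    \frac{\wass^2(\rho^s, \hat\rho) - \wass^2(\rho,\hat\rho)}{s} = \int_0^1 \frac{G_s(q) - G_0(q)}{s}\,\big(G_s(q) + G_0(q) - 2H(q)\big)\,\intd q,
\end{align*}
and the statement reduces to a differentiation-under-the-integral argument.

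First I would analyze the pointwise behaviour of $s \mapsto G_s(q)$. Since $\rho$ and $\rho^s$ are continuous and strictly positive on $[c,d] \subset I$ — with $m := \min_{[c,d]}\rho > 0$ — both $F_0$ and $F_s$ restrict to increasing homeomorphisms of $[c,d]$ onto $[F_0(c), F_0(d)]$ (here the equalities $F_s(c) = F_0(c)$ and $F_s(d) = F_0(d)$ use $E(c) = E(d) = 0$). For $q$ outside $[F_0(c), F_0(d)]$ one checks directly from $G_s(q) = \inf\{x : F_s(x) \ge q\}$ and $F_s \equiv F_0$ on $\R \setminus [c,d]$ that $G_s(q) = G_0(q)$ for every admissible $s$. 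For $q \in (F_0(c), F_0(d))$ one has $G_s(q), G_0(q) \in (c,d)$ and the identity $F_0(G_s(q)) - F_0(G_0(q)) = -s\,E(G_s(q))$; the mean value theorem together with $F_0' = \rho \ge m$ on $[c,d]$ yields $|G_s(q) - G_0(q)| \le |s|\,\|E\|_{\infty}/m$, hence $G_s(q) \to G_0(q)$ and, dividing by $s$,
\begin{align*}
    \lim_{s \to 0} \frac{G_s(q) - G_0(q)}{s} = -\frac{E(G_0(q))}{\rho(G_0(q))} =: \psi(q),
\end{align*}
with $\psi \equiv 0$ outside $(F_0(c), F_0(d))$. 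The crucial point is that the bound $|(G_s(q) - G_0(q))/s| \le \|E\|_\infty/m$ is uniform in both $q$ and $s$; combined with $|G_s(q) + G_0(q) - 2H(q)| \le (d-c) + 2|G_0(q) - H(q)|$ and the fact that $G_0 - H \in L^2(0,1) \subset L^1(0,1)$ (as $\wass^2(\rho,\hat\rho) < \infty$), the integrand is dominated by a fixed $L^1(0,1)$-function. Dominated convergence then gives
\begin{align*}
    \left.\frac{\intd}{\intd s}\right|_{s=0}\wass^2(\rho^s,\hat\rho) = 2\int_0^1 \psi(q)\,\big(G_0(q) - H(q)\big)\,\intd q.
\end{align*}

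Finally I would change variables and integrate by parts. On $(F_0(c), F_0(d))$ the map $q \mapsto x = G_0(q)$ is a $C^1$-diffeomorphism onto $(c,d)$, with inverse $x \mapsto F_0(x)$ and $\intd q = \rho(x)\,\intd x$; moreover $H(F_0(x)) = F_{\hat\rho}^{-1}(F_\rho(x)) = T(x)$ is precisely the monotone optimal transport map from $\rho$ to $\hat\rho$, which lies in $L^\infty_{loc}(I)$ since it is nondecreasing and $0 < F_\rho < 1$ on $I$ (using $\rho > 0$ on $I$). Substituting and using that $E$ vanishes off $[c,d]$,
\begin{align*}
    2\int_0^1 \psi(q)\,\big(G_0(q) - H(q)\big)\,\intd q = -2\int_c^d E(x)\,\big(x - T(x)\big)\,\intd x = -2\int_a^b E(x)\,\varphi'(x)\,\intd x,
\end{align*}
since $\varphi \in W^{1,\infty}_{loc}(I)$ with $\varphi'(x) = x - T(x)$ for a.e.\ $x \in I$. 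As $E \in C^\infty_c(I)$ and $\varphi$ is continuous on $I$, integration by parts eliminates the boundary contributions and leaves $-2\int_a^b E'(x)\,\varphi(x)\,\intd x = 2\int_a^b \eta\,\varphi\,\intd x$, which is the claimed formula.

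I expect the main obstacle to be the second step: one must control the difference quotient $(G_s(q) - G_0(q))/s$ of the quantile function \emph{uniformly} in $q$ and $s$, so that dominated convergence applies. This relies on the dichotomy in the structure of the perturbation — $G_s(q)$ is literally independent of $s$ for those $q$ whose quantile $G_0(q)$ lies outside the compact support of $\eta$, while on the region where $\eta$ is active the density is bounded below, providing the quantitative modulus $|s|\,\|E\|_\infty/m$. The bookkeeping at the endpoints $F_0(c), F_0(d)$ (which form a Lebesgue-null set of indices $q$) and with the generalized inverses requires care but introduces no genuinely new difficulty.
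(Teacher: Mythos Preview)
Your proof is correct and follows essentially the same route as the paper: represent $\wass^2$ via quantile functions, show the quantile is unchanged outside $[F_0(c),F_0(d)]$, differentiate under the integral on that compact range, change variables back to $x$ via $q=F_\rho(x)$, and integrate by parts against $\varphi'=\mathrm{id}-T$. The only cosmetic difference is that the paper invokes the implicit function theorem on $g_\xi(s,x)=F_{\rho^s}(x)-\xi$ to compute $\partial_s G_s(q)$, whereas you obtain both the pointwise derivative and the uniform bound on the difference quotient directly from the identity $F_0(G_s(q))-F_0(G_0(q))=-sE(G_s(q))$ and the mean value theorem.
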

\begin{proof}
    We use the notion of (inverse) cumulative distribution functions: For a probability density $\rho \in \mpt(\R)$, its cumulative distribution function $F_\rho: \R \to [0, 1]$ is defined as
    \begin{align*}
        F_\rho(x) = \int_{-\infty}^x \rho\,\intd x.
    \end{align*}
    Moreover, its inverse cumulative distribution function $X_\rho:\, ]0, 1[\, \to \R$ is defined as
    \begin{align*}
        X_\rho(\xi) = \inf\{x \in \R: F_\rho(x) > \xi\}.
    \end{align*}
    We recall that if $\rho \in \mpt(\R)\, \cap \, C(\R)$ is a continuous density, it holds $F_\rho(X_\rho(\xi)) = \xi$ for every $\xi \in ]0, 1[$, and $X_\rho(F_\rho(x)) = x$ for every $x \in \R$ with $\rho(x) > 0$. If in addition, $\hat\rho \in \mpt(\R)$ is another density, then the unique optimal transport map $T$ that pushes $\rho$ to $\hat\rho$ is given by $T = X_{\hat\rho} \circ F_\rho$, and the Wasserstein distance between $\rho$ and $\hat\rho$ satisfies
    \begin{align} \label{eq:wass_invdistr}
        \wass^2(\rho, \hat\rho) = \int_0^1 \left(X_\rho(\xi) - X_{\hat\rho}(\xi)\right)^2\,\intd \xi.
    \end{align}

    Now, let $\hat\rho, \rho \in \mpt(\R)$, $I = \,]a, b[\,\subset \R$, $\eta \in C^\infty_c(I)$, and $s_0 > 0$ be chosen as above. Since the support of $\eta$ is compact in $I$, there exist $a_0, a_1$ with $a < a_0 < a_1 < b$ such that $\eta = 0$ everywhere outside of $[a_0, a_1]$. Denoting $\xi_0 := F_\rho(a_0)$ and $\xi_1 := F_\rho(a_1)$, the fact that $\rho$ is positive in $I$ implies $0 < \xi_0 < \xi_1 < 1$, and that $F_\rho: [a_0, a_1] \to [\xi_0, \xi_1]$ is bijective with inverse $X_\rho$. Moreover, since $\eta = 0$ outside of $[a_0, a_1]$ and $\int \eta = 0$, it holds $X_{\rho^s}(\xi) = X_\rho(\xi)$ for every $\xi \in\, ]0, 1[\, \setminus \,]\xi_0, \xi_1[$ and every $s \in [-s_0, s_0]$.

    We now claim that for every fixed $\xi \in \,]\xi_0, \xi_1[$, the map $s \mapsto X_{\rho^s}(\xi)$ is differentiable on $]-s_0, s_0[$, with derivative
    \begin{align} \label{eq:invdistr_deriv}
        \partial_s X_{\rho^s}(\xi) = -\frac{1}{\rho^s\left(X_{\rho^s}(\xi)\right)} \int_{a_0}^{X_{\rho^s}(\xi)} \eta \,\intd x.
    \end{align}
    To prove the claim, we apply the implicit function theorem to the function
    \begin{align*}
        g_\xi:\, ]-s_0, s_0[\, \times\, ]a_0, a_1[\, \to \R, \ g_\xi(s, x) = F_{\rho^s}(x) - \xi.
    \end{align*}
    It holds $g_\xi(s, x) = 0$ if and only if $x = X_{\rho^s}(\xi)$, and $g_\xi$ is continuously differentiable with
    \begin{align*}
        \partial_s g_\xi(s, x) = \int_{a_0}^x \eta\,\intd x,\quad \partial_x g_\xi(s, x) = \rho^s(x) > 0.
    \end{align*}
    Hence, the implicit function theorem is applicable and yields \eqref{eq:invdistr_deriv}.

    To prove the Lemma, we now use the form \eqref{eq:wass_invdistr} of the Wasserstein distance. Interchanging the derivative with the integral is justified, since the expression \eqref{eq:invdistr_deriv} is bounded for $s \in [-s_0, s_0]$ and $\xi \in [\xi_0, \xi_1]$. We insert \eqref{eq:invdistr_deriv} and obtain
    \begin{align*}
        \left.\frac{\intd}{\intd s}\right|_{s = 0} \wass^2(\rho^s, \hat\rho) &= \left.\frac{\intd}{\intd s}\right|_{s = 0} \int_{\xi_0}^{\xi_1} \left(X_{\rho^s}(\xi) - X_{\hat\rho}(\xi)\right)^2\,\intd \xi \\
        &= -2 \int_{\xi_0}^{\xi_1} \left(X_{\rho}(\xi) - X_{\hat\rho}(\xi)\right) \frac{1}{\rho(X_\rho(\xi))} \int_{a_0}^{X_\rho(\xi)} \eta\,\intd x\,\intd \xi.
    \end{align*}
    Substituting $\xi = F_\rho(y)$, $\intd \xi = \rho(y) \,\intd y$ yields
    \begin{align*}
        \left.\frac{\intd}{\intd s}\right|_{s = 0} \wass^2(\rho^s, \hat\rho) = -2 \int_{a_0}^{a_1} \left(y - X_{\hat\rho}(F_{\rho}(y))\right) \int_{a_0}^{y} \eta\,\intd x\,\intd y = -2\int_{a_0}^{a_1} \left(y - T(y)\right) \int_{a_0}^{y} \eta\,\intd x\,\intd y,
    \end{align*}
    where we have inserted $T = X_{\hat\rho} \circ F_\rho$.  Using the fact that $\varphi \in W^{1,\infty}(]a_0, a_1[)$ with derivative $\varphi'(y) = y - T(y)$, and integrating by parts proves the claim.
\end{proof}

\section*{Acknowledgements}

I would like to thank my supervisor Prof Daniel Matthes for all his amazing support during this research project. He introduced me to the techniques required to study fourth-order evolution equations and provided very good literature recommendations at the beginning. Not only thanks to his outstanding expertise in this subject, but also his patience and helpfulness, he was able to give great assistance, both mathematically and emotionally, throughout the entire project.
\bibliography{references}
\bibliographystyle{abbrv}

\end{document}